\newtheorem{theorem}{Theorem}[section]
\newtheorem{lemma}[theorem]{Lemma}
\newtheorem{proposition}{Proposition}
\theoremstyle{definition}
\newtheorem{remark}{Remark}
\title[Convergence of the perturbed compositional gradient flow] 
      {A convergence analysis of the perturbed compositional gradient flow: Averaging principle and normal deviations}
\author[Wenqing Hu and Chris Junchi Li]{}
\subjclass{34C29, 60J60, 62L20, 90C30.}
 \keywords{Perturbed compositional gradient flow, stochastic composite gradient descent, perturbed gradient flow, stochastic gradient descent, fast--slow dynamical systems, averaging principle,
normal deviation.}
 \email{huwen@mst.edu}
 \email{junchi.li.duke@gmail.com}
\thanks{The first author is supported by an University of Missouri Research Board (UMRB) grant.}
\thanks{$^*$ Corresponding author: Wenqing Hu}
\begin{document}
\maketitle

\centerline{\scshape Wenqing Hu$^*$}
\medskip
{\footnotesize
 \centerline{Department of Mathematics and Statistics, Missouri University of Science and Technology}
   \centerline{(formerly University of Missouri, Rolla)}
   \centerline{Rolla, MO 65409--0020, USA}
} 

\medskip

\centerline{\scshape Chris Junchi Li}
\medskip
{\footnotesize
 \centerline{Department of Operations Research and Financial Engineering, Princeton University}
   \centerline{Princeton, NJ 08544, USA}
}

\bigskip

 \centerline{(Communicated by Sandra Cerrai)}

\begin{abstract}
We consider in this work a system of two stochastic differential equations named the perturbed compositional gradient flow. By introducing a separation of fast and slow scales of the two equations, we show that the limit of
the slow motion is given by an averaged ordinary differential equation. We then
demonstrate that the deviation of the slow motion from the averaged equation, after proper rescaling, converges to a stochastic process
with Gaussian inputs.
This indicates that the slow motion can be approximated in the weak sense by a standard perturbed gradient flow or the continuous-time stochastic gradient descent algorithm that solves the optimization problem for a composition of two functions.
As an application, the perturbed compositional gradient flow corresponds to the diffusion limit of the
Stochastic Composite Gradient Descent
(SCGD) algorithm for minimizing a composition of two expected-value functions in the optimization literatures. For the strongly convex case, such an analysis implies that the SCGD algorithm has the same convergence time asymptotic as the classical stochastic gradient descent algorithm.
Thus it validates, at the level of continuous approximation, the effectiveness of using the SCGD algorithm in the strongly convex case.
\end{abstract}

\section{Introduction}

In this work we target at analyzing a system of two stochastic differential equations called the \textit{perturbed compositional gradient flow}, which takes the form
\begin{equation}\label{Eq:DiffusionLimit}
\left\{\begin{array}{ll}
dy(t)=-\varepsilon y(t) dt +\varepsilon \mathbf{E} g_w(x(t))dt+ \varepsilon \Sigma_1(x(t))dW_t^1  \ , & y(0)=y_0 \ ,
\\
dx(t)=-\eta \mathbf{E}\widetilde{\nabla} g_w(x(t))\nabla f_{v}(y(t))dt+\eta \Sigma_2(x(t), y(t))dW_t^2 \ , & x(0)=x_0 \ .
\end{array}\right.
\end{equation}

Here $(w,v)$ follows a certain distribution on an index set $\mathcal{D}$;
$f_v: \mathbb{R}^m\rightarrow \mathbb{R}$ and $g_w: \mathbb{R}^n\rightarrow \mathbb{R}^m$ are assumed to be in $\mathbf{C}^{(4)}$; the vector $\nabla f_{v}(y)$
is the gradient column $m$--vector of $f_{v}$ evaluated at $y$
and the matrix $\widetilde{\nabla} g_{w}(x)$ is the $n\times m$ matrix formed by the gradient column
$n$--vector of each of the $m$ components of $g_{w}$ evaluated at $x$; $\varepsilon>0$ and $\eta>0$ are two small parameters.
We assume that the functions $f$ and $g$
are supported on some compact subsets of $\mathbb{R}^m$ and $\mathbb{R}^n$, respectively
\footnote{A further discussion of this assumption is provided in Remark 3 of Section 5.}.

The two Brownian motions $W_t^1$ and $W_t^2$ are independent standard Brownian motions moving in the spaces $\mathbb{R}^m$ and $\mathbb{R}^n$, respectively.
Here the diffusion matrix $\Sigma_1(x)$ satisfies
$$\Sigma_1(x)\Sigma_1^T(x)=\mathbf{E}\left[
\left(g_{w}(x)-\mathbf{E} g_w(x)\right)\left(g_{w}(x)-\mathbf{E} g_w(x)\right)^T
\right]\ ,$$
and the diffusion matrix $\Sigma_2(x,y)$ satisfies
$$\begin{array}{ll}
\Sigma_2(x,y)\Sigma_2^T(x,y)=&\mathbf{E}\left[
\left(\widetilde{\nabla} g_{w}(x)\nabla f_{v}(y)
-\mathbf{E}\widetilde{\nabla} g_{w}(x)\nabla f_{v}(y)\right)
\right.
\\
&\qquad \qquad \qquad \cdot
\left.
 \left(\widetilde{\nabla} g_{w}(x)\nabla f_{v}(y)
-\mathbf{E}\widetilde{\nabla} g_{w}(x)\nabla f_{v}(y)\right)^T
\right]\ ,
\end{array}$$
and both matrices are assumed to be non--degenerate for any choice of $x\in \mathbb{R}^n$ and $y\in \mathbb{R}^m$.

\subsection{Coupled fast--slow dynamics and averaging principle.}\label{ssec:coupled}

It turns out that, by an appropriate choice of the step size parameters,
the perturbed compositional gradient flow exhibits a fast--slow dynamics. To see this, we perform a change into the fast time scale for \eqref{Eq:DiffusionLimit} and we let $t \mapsto t/\eta$.
Then we have, for the time--changed process
$(X^{\varepsilon,\eta}(t), Y^{\varepsilon,\eta}(t))=(x(t/\eta), y(t/\eta))$, that
\begin{equation}\label{Eq:DiffusionLimitTimeChanged}
\left\{\begin{array}{l}
dY^{\varepsilon,\eta}(t)=-\dfrac{\varepsilon}{\eta} Y^{\varepsilon,\eta}(t) dt +\dfrac{\varepsilon}{\eta} \mathbf{E} g_w(X^{\varepsilon,\eta}(t))dt+
\dfrac{\varepsilon}{\sqrt{\eta}} \Sigma_1(X^{\varepsilon,\eta}(t))dW_t^1  \ ,
\\
 \qquad \qquad \qquad \qquad \qquad \qquad \qquad \qquad \qquad Y^{\varepsilon,\eta}(0)=y_0 \ ,
\\
dX^{\varepsilon,\eta}(t)=- \mathbf{E}\widetilde{\nabla} g_w(X^{\varepsilon,\eta}(t))\nabla f_{v}(Y^{\varepsilon,\eta}(t))dt+
\sqrt{\eta}\Sigma_2(X^{\varepsilon,\eta}(t), Y^{\varepsilon,\eta}(t))dW_t^2 \ ,
\\
 \qquad \qquad \qquad \qquad \qquad \qquad \qquad \qquad \qquad X^{\varepsilon,\eta}(0)=x_0 \ .
\end{array}\right.
\end{equation}

We will set the vectors
\begin{equation}\label{Eq:Quantity:B1}
B_1(X)=\mathbf{E} g_w(X)\in \mathbb{R}^m
\end{equation}
and
\begin{equation}\label{Eq:Quantity:B2}
B_2(X,Y)=- \mathbf{E}\widetilde{\nabla} g_w(X)\nabla f_{v}(Y)\in \mathbb{R}^n \ ,
\end{equation}
and the matrices
\begin{equation}\label{Eq:Quantity:A1}
A_1(X)=\Sigma_1(X)\Sigma_1^T(X)\in \mathbb{R}^m\otimes \mathbb{R}^m \ ,
\end{equation}
and
\begin{equation}\label{Eq:Quantity:A2}
A_2(X,Y)=\Sigma_2(X,Y)\Sigma_2^T(X,Y)\in \mathbb{R}^n\otimes \mathbb{R}^n \ .
\end{equation}

From our assumption on $f$ and $g$ we know that the vectors $B_1(X)$, $B_2(X,Y)$
and the matrices $A_1(X)$ and $A_2(X,Y)$ contain bounded coefficients together with their first derivatives, so that these quantities
are also uniformly Lipschitz continuous with respect to their arguments.

One can write system \eqref{Eq:DiffusionLimitTimeChanged} as
{\small\begin{equation}\label{Eq:DiffusionLimitTimeChangedStandardForm}
\left\{\begin{array}{ll}
dY^{\varepsilon,\eta}(t)=-\dfrac{\varepsilon}{\eta} Y^{\varepsilon,\eta}(t) dt +\dfrac{\varepsilon}{\eta} B_1(X^{\varepsilon,\eta}(t))dt+
\dfrac{\varepsilon}{\sqrt{\eta}} \Sigma_1(X^{\varepsilon,\eta}(t))dW_t^1, & Y^{\varepsilon,\eta}(0)=y_0,
\\
dX^{\varepsilon,\eta}(t)= B_2(X^{\varepsilon,\eta}(t), Y^{\varepsilon,\eta}(t))dt+ \sqrt{\eta}\Sigma_2(X^{\varepsilon,\eta}(t), Y^{\varepsilon,\eta}(t))dW_t^2, &
X^{\varepsilon,\eta}(0)=x_0.
\end{array}\right.
\end{equation}}

Fix $\varepsilon>0$ and let $\eta \rightarrow 0$. Thus $\dfrac{\varepsilon}{\eta}\rightarrow \infty$ as $\eta\rightarrow 0$. Then system \eqref{Eq:DiffusionLimitTimeChangedStandardForm}
is a standard fast--slow system
of stochastic differential equations.
In fact, the $Y$--component of the system \eqref{Eq:DiffusionLimitTimeChanged}
can be written as
\begin{align*}
dY^{\varepsilon,\eta}(t)
&=
-\dfrac{\varepsilon}{\eta} Y^{\varepsilon,\eta}(t) dt +\dfrac{\varepsilon}{\eta} \mathbf{E} g_w(X^{\varepsilon,\eta}(t))dt+
\sqrt{\varepsilon}\left(\dfrac{\varepsilon}{\eta}\right)^{1/2} \Sigma_1(X^{\varepsilon,\eta}(t))dW_t^1
 \ ,
 \\
 \ Y^{\varepsilon,\eta}(0)
 &=
 y_0
  \ ,
\end{align*}
so that as $\dfrac{\varepsilon}{\eta}\rightarrow \infty$, the $Y$ motion is running at a fast speed the following Ornstein--Uhlenbeck process
(OU process for short, see \cite[Exercise 5.5]{[Oksendal]})
\begin{equation}\label{Eq:FastMotionDeterministicSystemRandomPerturbation}
d\mathfrak{y}^{X,\varepsilon}(t)=-\mathfrak{y}^{X,\varepsilon}(t)dt + \mathbf{E} g_w(X)dt+\sqrt{\varepsilon}\Sigma_1(X)dW_t^2 \ , \ \mathfrak{y}^{X,\varepsilon}(0)=y_0 \ .
\end{equation}

The invariant measure $\mu^{X,\varepsilon}(dY)$ of the (multidimensional) OU process $\mathfrak{y}^{X,\varepsilon}(t)$ is a Gaussian measure with mean $\mathbf{E} g_w(X)$
and covariance matrix $\dfrac{\varepsilon}{2} \Sigma_1(X)\Sigma_1^T(X)$:
\begin{equation}\label{Eq:InvariantMeasureOUSmallDiffusion}
\mu^{X,\varepsilon}(dY)\sim \mathcal{N}\left(\mathbf{E} g_w(X), \dfrac{\varepsilon}{2} \Sigma_1(X)\Sigma_1^T(X)\right) \ .
\end{equation}

Let us introduce the operator
\begin{equation}\label{Eq:BarEpsOperator}
\overline{q(X,Y)}^{\varepsilon}=\overline{q(X,Y)}^{\varepsilon}(X)=\int_{\mathbb{R}^m}q(X,Y)\mu^{X,\varepsilon}(dY) \ ,
\end{equation}
where $q(X,Y)$ can be scalar, vector or matrix--valued functions with arguments $X$ and $Y$.

As the \textit{fast motion} $Y^{\varepsilon,\eta}(t)$ process is running at a high speed, the process $X^{\varepsilon,\eta}(t)$ in \eqref{Eq:DiffusionLimitTimeChangedStandardForm}
plays the role of the \textit{slow motion}. That is to say, $X^{\varepsilon,\eta}(t)$ changes very little, and thus could be viewed as frozen,
 during a small time interval in which $Y^{\varepsilon,\eta}(t)$ is running very fast. Roughly speaking,
 in the dynamics of $X^{\varepsilon,\eta}(t)$, the fast component $Y^{\varepsilon,\eta}(t)$ can be
 replaced by the invariant measure of $\mathfrak{y}^{X^{\varepsilon,\eta}(t),\varepsilon}$ with frozen $X^{\varepsilon,\eta}(t)$.
 This heuristic supports the following asymptotic picture:
 as $\varepsilon>0$ fixed and $\eta\rightarrow 0$, thus $\dfrac{\varepsilon}{\eta}\rightarrow \infty$, one can approximate the slow process
 $X^{\varepsilon,\eta}(t)$ in \eqref{Eq:DiffusionLimitTimeChangedStandardForm} by an \textit{averaged} process $X^\varepsilon(t)$ satisfying
\begin{equation}\label{Eq:AveragedSlowMotionGaussianMeasureDeterministic}
dX^\varepsilon(t)=\overline{B_2(X^\varepsilon(t), Y)}^\varepsilon(X^\varepsilon(t))dt \ , \ X^\varepsilon(0)=x_0 \ .
\end{equation}

The approximation of $X^{\varepsilon,\eta}(t)$ by $X^\varepsilon(t)$ is the content of the
classical \textit{averaging principle} and was discussed in many literatures (see e.g. \cite{[Khasminskii1968AveragingSDE]}, \cite{[Khasminskii1966SmallParameterDE]}, \cite[Chapter 7]{[FW book]}).
In this paper we will show that (see Proposition \ref{Proposition:ConvergenceXEpsEtaToXEps}),
as $\varepsilon>0$ is fixed and set $\eta \rightarrow 0$, for $0\leq t\leq T$ we have
\begin{equation}\label{Eq:ApproximationSlowProcessByAveragedMotion}
\sup\limits_{0\leq t\leq T}\mathbf{E}|X^{\varepsilon,\eta}(t)-X^\varepsilon(t)|_{\mathbb{R}^n}^2\rightarrow 0 \ .
\end{equation}
This justifies the approximation of the averaged motion $X^\varepsilon(t)$ to the slow process $X^{\varepsilon,\eta}(t)$.

It turns out, as we will prove quantitatively in Lemma
\ref{Lemma:ErrorSmallEpsAveragedQuantity} below, that when $\varepsilon\rightarrow 0$,
\begin{equation}\label{Eq:ConvergenceSmallEpsAveragedQuantity}
\overline{q(X,Y)}^\varepsilon- q(X, \mathbf{E} g_w(X)) \approx \mathcal{O}(\sqrt{\varepsilon}) \ .
\end{equation}

Therefore as $\varepsilon\rightarrow 0$, by \eqref{Eq:Quantity:B2} we see that
$$\overline{B_2(X, Y)}^\varepsilon(X)\approx -\mathbf{E} \widetilde{\nabla} g_w(X)\nabla f_v(\mathbf{E} g_w(X))+\mathcal{O}(\sqrt{\varepsilon}) \ .$$
Thus as $\varepsilon \rightarrow 0$, the process $X^\varepsilon(t)$ approximates another process $\bar{X}(t)$ that solves an ordinary differential equation:
\begin{equation}\label{Eq:AveragedSlowMotionGD}
d\bar{X}(t)=-\mathbf{E} \widetilde{\nabla} g_w(\bar{X}(t))\nabla f_v(\mathbf{E} g_w(\bar{X}(t)))dt \ , \ \bar{X}(0)=x_0 \ ,
\end{equation}
with an error of $\mathcal{O}(\sqrt{\varepsilon})$. In fact, equation \eqref{Eq:AveragedSlowMotionGD} can be viewed as a gradient flow, which is the perturbed gradient flow with no stochastic noise terms \cite{[hu2017fast]}. Such averaging principle can hence explain why we call \eqref{Eq:DiffusionLimit} the perturbed compositional gradient flow.

\subsection{A sharper rate via normal deviation.}\label{ssec:sharper}

One major drawback of the classical averaging principle is that, the approximation $X^{\varepsilon,\eta}(t)\rightarrow X^\varepsilon(t)$ as $\eta\rightarrow 0$
 in \eqref{Eq:ApproximationSlowProcessByAveragedMotion} can only identify the deterministic drift, and thus the small diffusion part
 in the equation for $X^{\varepsilon,\eta}(t)$ vanishes as $\eta\rightarrow 0$.
To overcome this difficulty, let us consider the deviation $X^{\varepsilon,\eta}(t)-X^{\varepsilon}(t)$ and we rescale it by a factor of $\sqrt{\eta}$.
Thus we consider the process
\begin{equation}\label{Eq:RescaledDeviation}
Z^{\varepsilon,\eta}(t)=\dfrac{X^{\varepsilon,\eta}(t)-X^{\varepsilon}(t)}{\sqrt{\eta}} \ .
\end{equation}

We will show that (see Proposition \ref{Proposition:NormalApproximationWeakConvergence}), as $\eta \rightarrow 0$, the process $Z^{\varepsilon,\eta}(t)$ converges weakly to random process $Z_t^\varepsilon$. The process $Z_t^\varepsilon$ has its deterministic drift part and is driven by two mean $0$ Gaussian processes
carrying explicitly calculated covariance structures.
This implies that, roughly speaking, from \eqref{Eq:RescaledDeviation} we can expand
\begin{equation}\label{Eq:ExpansionNormalApproximation}
X^{\varepsilon,\eta}(t)\stackrel{\mathcal{D}}{\approx} X^\varepsilon(t)+\sqrt{\eta}Z_t^\varepsilon \ ,
\end{equation}
as $\eta\rightarrow 0$. Here $\stackrel{\mathcal{D}}{\approx}$ means approximate equality of probability distributions.
In fact, such approximate expansions have been introduced in the classical program under the context of stochastic climate models (see \cite{[Hasselmann1976]}, \cite[equation (4.8)]{[LArnold]}), and in physics this is also known as the \textit{Van Kampen's approximation} (see \cite{[VanKampen]}).

Therefore by \eqref{Eq:ConvergenceSmallEpsAveragedQuantity}, \eqref{Eq:AveragedSlowMotionGD} and \eqref{Eq:ExpansionNormalApproximation}
we know that the slow motion $X^{\varepsilon,\eta}(t)$ in \eqref{Eq:DiffusionLimitTimeChangedStandardForm} (or \eqref{Eq:DiffusionLimitTimeChanged})
has an expansion around the GD algorithm in \eqref{Eq:AveragedSlowMotionGD}:
\begin{equation}\label{Eq:ExpansionNormalApproximationGD}
X^{\varepsilon,\eta}(t)\stackrel{\mathcal{D}}{\approx} \bar{X}(t)+\mathcal{O}(\sqrt{\varepsilon})+\sqrt{\eta}Z_t^\varepsilon \ .
\end{equation}

Let us introduce the process $\mathrm{X}^{\varepsilon,\eta}(t)$ as
the following fast time-scale version of the perturbed gradient flow \cite{[hu2017fast]}, \cite{[JunchiEtAlDiffusionApproximation]}:
\begin{equation}\label{Eq:AveragedSlowMotionSGD}
d\mathrm{X}^{\varepsilon,\eta}(t)=
-\mathbf{E} \widetilde{\nabla} g_w(\mathrm{X}^{\varepsilon,\eta}(t))\nabla f_v(\mathbf{E} g_w(\mathrm{X}^{\varepsilon,\eta}(t)))dt+\sqrt{\eta}dZ_t^\varepsilon \ , \
 \mathrm{X}^{\varepsilon,\eta}(0)=x_0 \ .
\end{equation}
From \eqref{Eq:AveragedSlowMotionGD} and \eqref{Eq:AveragedSlowMotionSGD}, we know that
$$\bar{X}(t)+\sqrt{\eta}Z_t^\varepsilon-\mathrm{X}^{\varepsilon,\eta}(t)\stackrel{\mathcal{D}}{\approx} \mathcal{O}(\sqrt{\eta}) \ .$$
So that by \eqref{Eq:ExpansionNormalApproximationGD} we further have
\begin{equation}\label{Eq:ExpansionNormalApproximationSGD}
X^{\varepsilon,\eta}(t)\stackrel{\mathcal{D}}{\approx} \mathrm{X}^{\varepsilon,\eta}(t)+\mathcal{O}(\sqrt{\varepsilon})+\mathcal{O}(\sqrt{\eta}) \ .
\end{equation}

From the perspective of mathematical techniques,
there are two classical approaches to averaging principle and normal deviations
\footnote{A much more technical and functional--analytic third method is discussed in Remark 1 of Section \ref{sec:remarks}.}.
One is the classical Khasminskii's averaging method
\cite{[Khasminskii1968AveragingSDE]}. This method chooses an intermediate time scale $\Delta\rightarrow 0$ such that $\dfrac{\Delta}{\eta}\rightarrow \infty$.
This intermediate time scale enables the analysis of averaging procedure by using a fast motion with
frozen slow component. To demonstrate its effectiveness, in this work we exploit
this method to do our averaging analysis.
Another less intuitive method is the corrector method, which relies on the solution of an auxiliary Poisson equation. Upon
obtaining appropriate a--priori estimates for this Poisson equation, one can reduce the averaging principle or normal deviations
to the analysis of an It\^{o}'s formula. Since we are working in the case when fast motion $Y^{\varepsilon,\eta}(t)$ is an OU process,
when applying the corrector method,
we are mostly close to the set--up of \cite{[Pardoux-Veretennikov2]} (see also \cite{[Pardoux-Veretennikov1]}, \cite{[Pardoux-Veretennikov3]}, \cite{[CerraiNormalDeviation2009]}).
Our analysis of the normal deviations will be following the corrector method and based on a--priori bounds provided in \cite{[Pardoux-Veretennikov2]}.

\subsection{Connection with stochastic compositional gradient descent algorithm.}\label{ssec:connection}

In the field of statistical optimization, the stochastic composition optimization problem of the following form has been of tremendous interests in both theory and application:
\begin{equation}\label{Eq:OptimizationProblem}
\min\limits_{x} \left(\mathbf{E} f_v \circ \mathbf{E} g_w \right)(x).
\end{equation}
Here $x \in \mathbb{R}^q$, $f\circ g \equiv f(g(x))$ denotes the composite function, and $(v, w)$ denotes a pair of random variables.
\cite{[SCGDpaper]} has shown that the optimization problem \eqref{Eq:OptimizationProblem} includes many important applications in statistical learning and finance, such as reinforcement learning, statistical estimation, dynamic programming and portfolio management.

Let us consider the following version of Stochastic Composite Gradient Descent (SCGD) algorithm in \cite[Algorithm 1]{[SCGDpaper]} whose iteration takes the form
\begin{equation}\label{Eq:SCGDAlgorithm}
\left\{\begin{array}{ll}
y_{k+1}=(1-\varepsilon)y_k+\varepsilon g_{w_k}(x_k) \ , & y_0\in \mathbb{R}^m \ ,
\\
x_{k+1}=x_k-\eta \widetilde{\nabla} g_{w_k}(x_k)\nabla f_{v_k}(y_{k+1}) \ , & x_0\in \mathbb{R}^n \ .
\end{array}\right.
\end{equation}
Here $(w_k,v_k)$ is taken as i.i.d.~random vectors following some distribution $\mathcal{D}$ over the parameter space;
\footnote{Often in optimization for finite samples, the parameter space is chosen as some finite, discrete index set $\{1,\dots, N_1\}  \times \{1,\dots,N_2\}$, and $\mathcal{D}$ is the uniform distribution over such index set. We extend this setting to any distribution over general parameter space.}
$f_{v_k}: \mathbb{R}^m\rightarrow \mathbb{R}$ and $g_{w_k}: \mathbb{R}^n\rightarrow \mathbb{R}^m$ are functions indexed by the aformentioned random vectors; the vector $\nabla f_{v_k}(y_{k+1})$
is the gradient column $m$--vector of $f_{v_k}$ evaluated at $y_{k+1}$
and the matrix $\widetilde{\nabla} g_{w_k}(x_k)$ is the $n\times m$ matrix formed by the gradient column
$n$--vector of each of the $m$ components of $g_{w_k}$ evaluated at $x_k$.
The SCGD algorithm \eqref{Eq:SCGDAlgorithm} is a provably effective method that solves \eqref{Eq:OptimizationProblem}; see early optimization literatures on the convergence and rates of convergence analysis in \cite{[Ermoliev],[SCGDpaper]}. However, the convergence rate of SCGD algorithm and its variations is not known to be comparable to its SGD counterpart \cite{[SCGDpaper],[SCGDpaper2]}. To drill further into this algorithm we consider the coupled diffusion process \eqref{Eq:DiffusionLimit} which is a continuum version, as both $\varepsilon,\eta\to 0$ and $\varepsilon / \eta \to \infty$, of the SCGD algorithm \eqref{Eq:SCGDAlgorithm}. We copy in below the perturbed compositional gradient flow \eqref{Eq:DiffusionLimit} for convenience:
\begin{equation}\label{Eq:DiffusionLimit2}
\left\{\begin{array}{ll}
dy(t)=-\varepsilon y(t) dt +\varepsilon \mathbf{E} g_w(x(t))dt+ \varepsilon \Sigma_1(x(t))dW_t^1  \ , & y(0)=y_0 \ ,
\\
dx(t)=-\eta \mathbf{E}\widetilde{\nabla} g_w(x(t))\nabla f_{v}(y(t))dt+\eta \Sigma_2(x(t), y(t))dW_t^2 \ , & x(0)=x_0 \ .
\end{array}\right.
\end{equation}
Here $(w,v)$ is taken to be distributed as $\mathcal{D}$, and $f_v: \mathbb{R}^m\rightarrow \mathbb{R}$ and $g_w: \mathbb{R}^n\rightarrow \mathbb{R}^m$ are assumed to be in $\mathbf{C}^{(4)}$. Without loss of generality, when considering an optimization problem \eqref{Eq:OptimizationProblem}, we can assume that the functions $f$ and $g$
are supported on some compact subsets of $\mathbb{R}^m$ and $\mathbb{R}^n$, respectively\footnote{See Remark 3 of Section \ref{sec:remarks}.}. Also for convenience, let us further assume that the $w$ and $v$ in the $(w,v)$-pair drawn from $\mathcal{D}$ are independent. We do not believe this assumption is necessary, see discussions in \cite{[SCGDpaper],[SCGDpaper2]}; however it does simplify our analysis since $(W_t^1,W_t^2)$ in the perturbed compositional gradient flow \eqref{Eq:DiffusionLimit2} can be chosen as an independent pair of Brownian motions which in turn simplifies the proof.

Recall that $(X^{\varepsilon,\eta}(t), Y^{\varepsilon,\eta}(t))=(x(t/\eta), y(t/\eta))$.
In the case where the objective function $\left(\mathbf{E} f_v \circ \mathbf{E} g_w \right)(x)$ is strongly convex, $\mathrm{X}^{\varepsilon,\eta}(t)$ in \eqref{Eq:AveragedSlowMotionSGD} enters a basin containing the minimizer of
 \eqref{Eq:OptimizationProblem} in finite time $T>0$, so that
\eqref{Eq:ExpansionNormalApproximationSGD} implies $X^{\varepsilon,\eta}(t)$ in \eqref{Eq:DiffusionLimitTimeChanged} enters a basin containing the minimizer of  \eqref{Eq:OptimizationProblem}
also in finite time $T>0$. Such heuristic analysis validates, in the sense of convergence, the effectiveness of using the perturbed compositional gradient flow to solve \eqref{Eq:OptimizationProblem} in the strongly convex case. Such argument can be generalized to the convex case and omitted due to the limitation of space.

It is worth pointing out that in an early probability literature \cite{[Pardoux-Veretennikov2]}, the authors have briefly mentioned in its introductory part the potential application of averaging principle to the analysis of stochastic approximation algorithms.
In contrast, in the classical literature on stochastic approximation algorithms (see \cite{[AdaptedAlgorithmbook]}, \cite{[Borkar]},\cite{[Kushner-Yin]}), the techniques of normal deviations have been addressed under the context of weak convergence to diffusion processes in the discrete setting.
For example, \cite[Chap.~4, Part II]{[AdaptedAlgorithmbook]} analyzed the asymptotic behavior of a board class of single-equation adaptive algorithms including SGD.
Moreover, \cite[Chap.~8]{[Kushner-Yin]} discussed the idea of multiple timescale analysis for stochastic approximation algorithms; see also \cite[Chap.~6]{[Borkar]} for a connection to averaging principle for constant stepsize algorithms. However, these mathematical theories focus on the long-time asymptotic analysis instead of convergence rates, which is vital in many recent applications.
The current work serves as an attempt on convergence rates using one algorithmic example (SCGD) and can be viewed as a further contribution along this line of research thread.

\vspace*{4pt}

\noindent\textbf{Organization.}
The paper is organized as follows.
In Section \ref{sec:averaging} we will show the averaging principle that justifies the convergence of $X^{\varepsilon,\eta}(t)$
to $X^\varepsilon(t)$ as $\eta \rightarrow 0$. In Section \ref{sec:normal} we will consider the rescaled deviation
$Z_t^{\varepsilon,\eta}=(X^{\varepsilon,\eta}(t)-X^{\varepsilon}(t)) / \sqrt{\eta}$ and we show that as $\eta \rightarrow 0$ it converges weakly to the process $Z_t^\varepsilon$. This justifies \eqref{Eq:ExpansionNormalApproximation}.
In Section \ref{sec:error} we show the approximation \eqref{Eq:ExpansionNormalApproximationSGD} and we justify the effectiveness of using SCGD in the strongly convex case.
In Section \ref{sec:remarks} we discuss further problems, remarks and generalizations.

\vspace*{4pt}

\noindent\textbf{Notational Conventions.} For an $n$--vector $v=(v_1,...,v_n)$
we define the norm
$$|v|_{\mathbb{R}^n}=(v_1^2+...+v_n^2)^{1/2} \ .$$
We also denote $[v]_k=v_k$ for $k=1,2,...,n$.
For any $n\times n$ matrix $\sigma\in \mathbb{R}^n \otimes \mathbb{R}^n$, let us define the norm
$$\|\sigma\|_{\mathbb{R}^n\otimes \mathbb{R}^n}=\left(\sum\limits_{i,j=1}^n \sigma_{ij}^2\right)^{1/2} \ .$$
If $q$ is a vector or a matrix, then
$|q|_{\text{norm}}$ denotes either $|q|_{\mathbb{R}^n}$ when $q$ is an $n$--vector, or $\|q\|_{\mathbb{R}^n\otimes \mathbb{R}^n}$ if $q$
is an $n\times n$ matrix. The standard inner product in $\mathbb{R}^n$ is denoted as $\langle \bullet, \bullet \rangle_{\mathbb{R}^n}$.

The spaces $\mathbf{C}^{(i)}(D)$, $i=0,1,...$ (and $\mathbf{C}(D)=\mathbf{C}^{(0)}(D)$) are the spaces of
$i$--times continuously differentiable functions on a domain $D$ ($D$ can be the whole space). For a
function $f\in \mathbf{C}^{(i)}(D)$
we define $\|f\|_{i}$ to be the $\mathbf{C}^{(i)}(D)$ norm of $f$ on $D$.
In case we need to highlight the target space, we also use $\mathbf{C}^{(i)}(D;M)$ that refers to functions
in the space $\mathbf{C}^{(i)}(D)$ that are mapped into $M$.
If $f\in \text{Lip}(D)$ is Lipschitz continuous on $D$, then
$[f]_{\text{Lip}}$ is the Lipschitz seminorm $[f]_{\text{Lip}}=\sup\limits_{x,y\in D}\dfrac{|f(x)-f(y)|_{\text{norm}}}{|x-y|_{\text{norm}}}$.
In the case of vector or matrix valued functions, the Lipschitz norm is then defined to be the largest Lipschitz norm for its
corresponding component functions.

Throughout the paper, capital $X(t), Y(t), \bar{X}(t)$, etc., are quantities
for the time rescaled process \eqref{Eq:DiffusionLimitTimeChanged},
and small $x(t), y(t), \bar{x}(t)$, etc., are quantities for the original process \eqref{Eq:DiffusionLimit}.
The constant $C$ denotes a positive constant that varies from line to line. Sometimes, to emphasize
the dependence of this constant on other parameters, $C=C(\bullet)$ may also be used.
For notational convenience, we use simultaneously, e.g., $X(t)$ or $X_t$ to denote a stochastic process.

\section{The convergence of $X^{\varepsilon,\eta}(t)$ to $X^\varepsilon(t)$: Averaging principle.}\label{sec:averaging}

In this section we are going to show the convergence of $X^{\varepsilon,\eta}(t)$ to $X^{\varepsilon}(t)$
as $\eta \rightarrow 0$ by arguing as in the classical averaging principle (see \cite{[Khasminskii1966SmallParameterDE]}, \cite{[Khasminskii1968AveragingSDE]}).

Our first Lemma is about $L^2$--boundedness of the system $(X^{\varepsilon,\eta}_t, Y^{\varepsilon,\eta}_t)$ in \eqref{Eq:DiffusionLimitTimeChangedStandardForm}.

\begin{lemma}\label{Lemma:L2BoundedNessSlowFastProcessTimeChanged}
For any $T>0$ and $0<\eta<1$ there exist some constant $C=C(T,\varepsilon)>0$ such that
\begin{equation}\label{Lemma:L2BoundedNessSlowFastProcessTimeChanged:Eq:XProcessBound}
\sup\limits_{0\leq t\leq T}\mathbf{E}|X^{\varepsilon,\eta}_t|_{\mathbb{R}^n}^2\leq C(1+|x_0|_{\mathbb{R}^n}^2) \ ,
\end{equation}
and
\begin{equation}\label{Lemma:L2BoundedNessSlowFastProcessTimeChanged:Eq:YProcessBound}
\sup\limits_{0\leq t\leq T}\mathbf{E}|Y^{\varepsilon,\eta}_t|_{\mathbb{R}^n}^2\leq C(1+|y_0|_{\mathbb{R}^m}^2) \ .
\end{equation}
\end{lemma}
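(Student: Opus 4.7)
The plan is to establish each bound by applying It\^{o}'s formula to $|X^{\varepsilon,\eta}_t|_{\mathbb{R}^n}^2$ and $|Y^{\varepsilon,\eta}_t|_{\mathbb{R}^m}^2$ separately, taking expectations, and then invoking Gronwall's inequality. The key structural fact that makes this possible is the compact support assumption on $f$ and $g$, which forces the coefficients $B_1, B_2, \Sigma_1, \Sigma_2$ in \eqref{Eq:DiffusionLimitTimeChangedStandardForm}, together with their first derivatives, to be globally bounded by some constant $M$ independent of $\eta$ (possibly depending on $\varepsilon$ through $\Sigma_i$).

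For the $X$-component, I would apply It\^{o}'s formula to $|X^{\varepsilon,\eta}_t|_{\mathbb{R}^n}^2$ and take expectations. The stochastic integral is a martingale and drops out, the drift contributes $2\mathbf{E}\langle X^{\varepsilon,\eta}_t, B_2(X^{\varepsilon,\eta}_t, Y^{\varepsilon,\eta}_t)\rangle_{\mathbb{R}^n}$, and the quadratic variation contributes $\eta\, \mathbf{E}\|\Sigma_2\|^2$. Applying Young's inequality $2\langle X, B_2\rangle_{\mathbb{R}^n} \leq |X|_{\mathbb{R}^n}^2 + M^2$ together with $\eta < 1$ and boundedness of $\Sigma_2$, one obtains a differential inequality of the form
$$\frac{d}{dt}\mathbf{E}|X^{\varepsilon,\eta}_t|_{\mathbb{R}^n}^2 \leq \mathbf{E}|X^{\varepsilon,\eta}_t|_{\mathbb{R}^n}^2 + C,$$
and Gronwall's inequality delivers the claimed bound $C(T)(1+|x_0|_{\mathbb{R}^n}^2)$.

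For the $Y$-component, the main subtlety is that both coefficients $\varepsilon/\eta$ and $\varepsilon/\sqrt{\eta}$ in \eqref{Eq:DiffusionLimitTimeChangedStandardForm} blow up as $\eta \to 0$, so a naive bound would be useless uniformly in $\eta$. The point is that the $Y$-equation is of Ornstein--Uhlenbeck type: the dissipation $-(\varepsilon/\eta) Y^{\varepsilon,\eta}_t\, dt$ has exactly the right strength to absorb both the large drift $(\varepsilon/\eta) B_1(X^{\varepsilon,\eta}_t)\, dt$ and the quadratic variation $(\varepsilon^2/\eta)\|\Sigma_1(X^{\varepsilon,\eta}_t)\|^2\, dt$. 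Indeed, after It\^{o}'s formula followed by Young's inequality $2\langle Y, B_1\rangle_{\mathbb{R}^m} \leq |Y|_{\mathbb{R}^m}^2 + M^2$, the resulting bound reads
$$\frac{d}{dt}\mathbf{E}|Y^{\varepsilon,\eta}_t|_{\mathbb{R}^m}^2 \leq -\frac{\varepsilon}{\eta}\,\mathbf{E}|Y^{\varepsilon,\eta}_t|_{\mathbb{R}^m}^2 + \frac{\varepsilon}{\eta}\bigl(M^2 + \varepsilon M^2\bigr),$$
and integrating this scalar linear ODE yields $\mathbf{E}|Y^{\varepsilon,\eta}_t|_{\mathbb{R}^m}^2 \leq |y_0|_{\mathbb{R}^m}^2 + (1+\varepsilon)M^2$ uniformly in $\eta \in (0,1)$ and $t \in [0,T]$. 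This cancellation between the apparently singular drift and driving terms is the only conceptual step; all the rest is routine SDE moment bookkeeping.
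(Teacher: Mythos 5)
Your proposal is correct, and for the $Y$-bound it takes a genuinely different (and arguably more streamlined) route than the paper. The paper first writes $Y^{\varepsilon,\eta}_t$ in mild form \eqref{Eq:MildFormYprocess} and splits it as $\Lambda(t)+\Gamma(t)$, where $\Gamma$ is the stochastic convolution $\frac{\varepsilon}{\sqrt{\eta}}\int_0^t e^{-\frac{\varepsilon}{\eta}(t-s)}\Sigma_1(X^{\varepsilon,\eta}_s)\,dW^1_s$; it then bounds $\Lambda$ by a pathwise differential inequality plus Gronwall, and bounds $\mathbf{E}|\Gamma(t)|^2$ separately via the It\^{o} isometry, obtaining $\mathbf{E}|\Gamma(t)|^2_{\mathbb{R}^m}\leq C\varepsilon/2$. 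You instead apply It\^{o}'s formula directly to $|Y^{\varepsilon,\eta}_t|^2_{\mathbb{R}^m}$ and let the dissipative term $-\frac{2\varepsilon}{\eta}\mathbf{E}|Y|^2$ absorb, after Young's inequality, both the $\frac{\varepsilon}{\eta}B_1$ drift and the $\frac{\varepsilon^2}{\eta}\|\Sigma_1\|^2$ quadratic-variation term in one stroke; the resulting linear scalar inequality integrates to a bound uniform in $\eta\in(0,1)$ without any decomposition. Both arguments rest on the same structural fact — the OU dissipation rate $\varepsilon/\eta$ exactly matches the size of the forcing — but yours avoids the mild-form bookkeeping, at the cost of implicitly needing the standard localization step to justify that the local martingale term vanishes in expectation (which the paper's route sidesteps by isolating the stochastic integral and invoking the isometry \eqref{Eq:KiferMatrixStochasticIntegralItoIsometry} directly). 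For the $X$-bound the two arguments are essentially equivalent: the paper estimates the integral form termwise using boundedness of $B_2$ and $\Sigma_2$ (no Gronwall needed), while you run It\^{o} plus Gronwall; both yield $C(T)(1+|x_0|^2_{\mathbb{R}^n})$.
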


\begin{proof}
This Lemma can be derived in the same way as in \cite[Lemma 4.2]{[CerraiKhasminskii2009]}. In fact, we can write the equation
\eqref{Eq:DiffusionLimitTimeChangedStandardForm} for $X^{\varepsilon,\eta}_t$ in an integral form as
$$
X^{\varepsilon,\eta}_t=x_0+\int_0^t B_2(X^{\varepsilon,\eta}_s, Y^{\varepsilon,\eta}_s)ds+\sqrt{\eta}\int_0^t
\Sigma_2(X^{\varepsilon,\eta}_s, Y^{\varepsilon,\eta}_s)dW_s^2 \ .
$$

Therefore
$$\begin{array}{ll}
& \mathbf{E}|X^{\varepsilon,\eta}_t|_{\mathbb{R}^n}^2
\\
\leq &\displaystyle{ C\left(|x_0|_{\mathbb{R}^n}^2+\mathbf{E}\left|\int_0^t B_2(X^{\varepsilon,\eta}_s, Y^{\varepsilon,\eta}_s)ds\right|_{\mathbb{R}^n}^2
+\eta\mathbf{E}\left|\int_0^t
\Sigma_2(X^{\varepsilon,\eta}_s, Y^{\varepsilon,\eta}_s)dW_s^2\right|_{\mathbb{R}^n}^2\right)  \ .}
\\
\end{array}$$

For a matrix valued random function $\sigma(t)=\sigma(\omega,t)$ adapted to the filtration of $W_t$
we have (see \cite[(3.12) and (3.13)]{[Kifer1981SaddlePoint]})
\begin{equation}\label{Eq:KiferMatrixStochasticIntegralItoIsometry}
\mathbf{E} \left|\int_0^t \sigma(t)dW_t\right|_{\mathbb{R}^n}^2=\int_0^t \mathbf{E} \|\sigma(t)\|_{\mathbb{R}^n\otimes \mathbb{R}^n}^2 dt \ .
\end{equation}

Therefore we obtain \eqref{Lemma:L2BoundedNessSlowFastProcessTimeChanged:Eq:XProcessBound}.

We can write the solution $Y_t^{\varepsilon,\eta}$ in \eqref{Eq:DiffusionLimitTimeChangedStandardForm} in mild form as
\begin{equation}\label{Eq:MildFormYprocess}
Y_t^{\varepsilon,\eta}=e^{-\frac{\varepsilon}{\eta}t}y_0+\dfrac{\varepsilon}{\eta}\int_0^t e^{-\frac{\varepsilon}{\eta}(t-s)} B_1(X_s^{\varepsilon,\eta})ds
+\dfrac{\varepsilon}{\sqrt{\eta}}\int_0^t e^{-\frac{\varepsilon}{\eta}(t-s)}\Sigma_1(X_s^{\varepsilon,\eta})dW_s^1 \ .
\end{equation}

Set $\Gamma(t)=\displaystyle{\dfrac{\varepsilon}{\sqrt{\eta}}\int_0^t e^{-\frac{\varepsilon}{\eta}(t-s)}\Sigma_1(X_s^{\varepsilon,\eta})dW_s^1}$ and $\Lambda(t)=Y^{\varepsilon,\eta}_t-\Gamma(t)$.
Then we have
$$d\Lambda(t)=-\dfrac{\varepsilon}{\eta}[\Lambda(t)+B_1(X^{\varepsilon,\eta}_t)]dt \ , \ \Lambda(0)=y_0 \ ,$$
which gives
$$
\dfrac{1}{2}\dfrac{d}{dt}|\Lambda(t)|_{\mathbb{R}^n}^2  =\left\langle \Lambda(t), -\dfrac{\varepsilon}{\eta}[\Lambda(t)+B_1(X^{\varepsilon,\eta}_t)]\right\rangle_{\mathbb{R}^n} \leq  -\dfrac{\varepsilon}{2\eta}|\Lambda(t)|_{\mathbb{R}^n}^2+\dfrac{C\varepsilon}{\eta} \ .
$$

Therefore by Gronwall inequality we know that for $0\leq t\leq T$ we have
$$|\Lambda(t)|_{\mathbb{R}^n}^2\leq Ce^{-\frac{\varepsilon}{\eta}t}|y_0|_{\mathbb{R}^m}^2+2CT\leq C(1+|y_0|_{\mathbb{R}^m}^2) \ .$$

It remains to estimate $\mathbf{E}|\Gamma(t)|_{\mathbb{R}^m}^2$. Again, by \eqref{Eq:KiferMatrixStochasticIntegralItoIsometry} we have
$$
\mathbf{E}|\Gamma(t)|_{\mathbb{R}^m}^2 = \dfrac{\varepsilon^2}{\eta} e^{-\frac{2\varepsilon}{\eta}t}\displaystyle{\int_0^t e^{\frac{2\varepsilon}{\eta}s}\|\Sigma_1(X^{\varepsilon,\eta}_s)\|_{\mathbb{R}^n\otimes \mathbb{R}^n}^2ds}
\leq \dfrac{C \varepsilon}{2} \ .$$

Thus we obtain
$$\mathbf{E}|Y^{\varepsilon,\eta}_t|_{\mathbb{R}^m}^2 \leq C(\mathbf{E}|\Lambda(t)|_{\mathbb{R}^m}^2+\mathbf{E}|\Gamma(t)|_{\mathbb{R}^m}^2)\leq C(1+|y_0|_{\mathbb{R}^m}^2) \ ,$$
which is \eqref{Lemma:L2BoundedNessSlowFastProcessTimeChanged:Eq:YProcessBound}.
\end{proof}

The next Lemma summarizes basic facts about the process $\mathfrak{y}^{X,\varepsilon}$ defined
in \eqref{Eq:FastMotionDeterministicSystemRandomPerturbation}.

\begin{lemma}\label{Lemma:BasicFactsFastMotionDeterministicSystemRandomPerturbation}
Let the process $\mathfrak{y}^{X,\varepsilon}(t)$ defined
in \eqref{Eq:FastMotionDeterministicSystemRandomPerturbation} start from $\mathfrak{y}^{X,\varepsilon}(0)=y\in \mathbb{R}^m$.
Then for any function $\varphi: \mathbb{R}^m\rightarrow \mathbb{R}$, for some $\delta>0$ we have
\begin{equation}\label{Lemma:BasicFactsFastMotionDeterministicSystemRandomPerturbation:Eq:SemigroupExponentialConvergence}
\displaystyle{\left|\mathbf{E}_{y}\varphi(\mathfrak{y}^{X,\varepsilon}(t))-\int_{\mathbb{R}^m}\varphi(Y)\mu^{X,\varepsilon}(dY)\right|
\leq Ce^{-\delta t}(1+|y|_{\mathbb{R}^m})[\varphi]_{\text{Lip}} \ ,}
\end{equation}
where the constant $C>0$ may depend on $\varepsilon$, but is independent of $X$.

Moreover, for some constant $C>0$ we have
\begin{equation}\label{Lemma:BasicFactsFastMotionDeterministicSystemRandomPerturbation:Eq:LLNDriftSlow}
\begin{array}{l}
\displaystyle{\mathbf{E}\left|\dfrac{1}{T}\int_{t}^{t+T} B_2(X, \mathfrak{y}^{X,\varepsilon}(s))ds -
\int_{\mathbb{R}^m}B_2(X,Y)\mu^{X,\varepsilon}(dY)\right|^2_{\mathbb{R}^n}}
\\
\displaystyle{\qquad \qquad \qquad \leq \dfrac{C}{T}\left[\sqrt{\varepsilon}[B_2]_{\text{Lip}}(1+|y|_{\mathbb{R}^m})
+|B_2(X,0)|_{\mathbb{R}^n}\right]^2
\ .}
\end{array}
\end{equation}

Finally
\begin{equation}\label{Lemma:BasicFactsFastMotionDeterministicSystemRandomPerturbation:Eq:L2BoundedNessFastMotionGeneratedByOU}
\mathbf{E}|\mathfrak{y}^{X,\varepsilon}(t)|_{\mathbb{R}^m}^2\leq C(1+e^{-2t}|y|_{\mathbb{R}^m}^2) \ ,
\end{equation}
and
\begin{equation}\label{Lemma:BasicFactsFastMotionDeterministicSystemRandomPerturbation:Eq:IntegrationFastMotionAgainstInvariantMeasure}
\displaystyle{\int_{\mathbb{R}^m}|y|_{\mathbb{R}^m}^2\mu^{X, \varepsilon}(dY)\leq C<\infty  \ ,}
\end{equation}
in which the constant $C$ may depend on $\varepsilon$ but is independent of $X$.
\end{lemma}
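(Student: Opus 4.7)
The plan is to exploit the explicit solution of the linear Ornstein--Uhlenbeck SDE \eqref{Eq:FastMotionDeterministicSystemRandomPerturbation},
$$\mathfrak{y}^{X,\varepsilon}(t) = e^{-t}y + (1-e^{-t})\mathbf{E}g_w(X) + \sqrt{\varepsilon}\int_0^t e^{-(t-s)}\Sigma_1(X)\,dW_s^2,$$
together with the fact that, under the compact-support hypothesis on $f,g$, both $\mathbf{E}g_w(X)$ and $\Sigma_1(X)$ are uniformly bounded in $X$. The $L^2$ bounds \eqref{Lemma:BasicFactsFastMotionDeterministicSystemRandomPerturbation:Eq:L2BoundedNessFastMotionGeneratedByOU} and \eqref{Lemma:BasicFactsFastMotionDeterministicSystemRandomPerturbation:Eq:IntegrationFastMotionAgainstInvariantMeasure} are then immediate: take $|\cdot|_{\mathbb{R}^m}^2$-norms of the explicit formula, apply \eqref{Eq:KiferMatrixStochasticIntegralItoIsometry} to the stochastic integral, and use that $\mu^{X,\varepsilon}$ is Gaussian with mean $\mathbf{E}g_w(X)$ and covariance of order $\varepsilon$ as recorded in \eqref{Eq:InvariantMeasureOUSmallDiffusion}. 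For the exponential convergence \eqref{Lemma:BasicFactsFastMotionDeterministicSystemRandomPerturbation:Eq:SemigroupExponentialConvergence} I would use a synchronous coupling: run a second copy $\tilde{\mathfrak{y}}^{X,\varepsilon}$ driven by the same $W^2$ but started from an independent random $\tilde y\sim\mu^{X,\varepsilon}$, so that $\tilde{\mathfrak{y}}^{X,\varepsilon}(s)\sim\mu^{X,\varepsilon}$ for every $s\geq 0$. The difference $u(t)=\mathfrak{y}^{X,\varepsilon}(t)-\tilde{\mathfrak{y}}^{X,\varepsilon}(t)$ satisfies the noise-free linear ODE $\dot u=-u$, whence $|u(t)|_{\mathbb{R}^m}=e^{-t}|y-\tilde y|_{\mathbb{R}^m}$. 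Combined with the Lipschitz bound on $\varphi$ and the Gaussian estimate $\mathbf{E}|\tilde y|_{\mathbb{R}^m}\leq C(1+\sqrt{\varepsilon})$, this gives \eqref{Lemma:BasicFactsFastMotionDeterministicSystemRandomPerturbation:Eq:SemigroupExponentialConvergence} with $\delta=1$.

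The substantive piece is the time-average bound \eqref{Lemma:BasicFactsFastMotionDeterministicSystemRandomPerturbation:Eq:LLNDriftSlow}. I would split the deviation into a bias term and a fluctuation term,
\begin{align*}
\frac{1}{T}\int_t^{t+T}\!\!B_2(X,\mathfrak{y}^{X,\varepsilon}(s))\,ds - \overline{B_2(X,Y)}^{\varepsilon}
&= \frac{1}{T}\int_t^{t+T}\!\!\bigl[B_2(X,\mathfrak{y}^{X,\varepsilon}(s))-\mathbf{E}B_2(X,\mathfrak{y}^{X,\varepsilon}(s))\bigr]ds\\
&\quad + \frac{1}{T}\int_t^{t+T}\!\!\bigl[\mathbf{E}B_2(X,\mathfrak{y}^{X,\varepsilon}(s))-\overline{B_2(X,Y)}^{\varepsilon}\bigr]ds.
\end{align*}
The bias term is controlled pathwise by \eqref{Lemma:BasicFactsFastMotionDeterministicSystemRandomPerturbation:Eq:SemigroupExponentialConvergence} applied to $\varphi(\cdot)=B_2(X,\cdot)$, yielding exponential decay in $s$ and hence an $O(1/T^2)$ contribution in squared norm. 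For the fluctuation term I would take the squared $\mathbb{R}^n$-norm, use Fubini to reduce the computation to a double integral of covariances $\mathbf{E}\langle B_2(X,\mathfrak{y}^{X,\varepsilon}(s_1))-\mathbf{E}B_2,\, B_2(X,\mathfrak{y}^{X,\varepsilon}(s_2))-\mathbf{E}B_2\rangle_{\mathbb{R}^n}$, and for $s_1<s_2$ condition on $\mathcal{F}_{s_1}$ and reapply \eqref{Lemma:BasicFactsFastMotionDeterministicSystemRandomPerturbation:Eq:SemigroupExponentialConvergence} to the (Markovian) inner expectation to extract an integrable kernel $e^{-(s_2-s_1)}$. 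This produces the $1/T$ prefactor.

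The main obstacle is arranging the amplitude estimates so that everything assembles into precisely the form $\frac{C}{T}[\sqrt{\varepsilon}[B_2]_{\text{Lip}}(1+|y|_{\mathbb{R}^m})+|B_2(X,0)|_{\mathbb{R}^n}]^2$ quoted in the lemma. The factor $\sqrt{\varepsilon}[B_2]_{\text{Lip}}$ must be extracted from the Gaussian concentration of $\mathfrak{y}^{X,\varepsilon}$ around its mean with typical fluctuations of size $\sqrt{\varepsilon}\|\Sigma_1(X)\|$; the factor $(1+|y|_{\mathbb{R}^m})$ tracks the initial transient through \eqref{Lemma:BasicFactsFastMotionDeterministicSystemRandomPerturbation:Eq:L2BoundedNessFastMotionGeneratedByOU}; and the term $|B_2(X,0)|_{\mathbb{R}^n}$ originates from the pointwise decomposition $|B_2(X,Y)|_{\mathbb{R}^n}\leq |B_2(X,0)|_{\mathbb{R}^n}+[B_2]_{\text{Lip}}|Y|_{\mathbb{R}^m}$ required to evaluate $B_2$ along the unbounded OU trajectory. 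The care needed in this bookkeeping is what distinguishes the quantitative bound from the looser qualitative one.
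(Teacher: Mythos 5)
Your proposal is correct and rests on the same foundation as the paper's own proof --- the explicit solution of the linear OU equation together with the uniform boundedness of $\mathbf{E}g_w(X)$ and $\Sigma_1(X)$ --- but it differs in two genuine respects. First, you obtain the mixing estimate \eqref{Lemma:BasicFactsFastMotionDeterministicSystemRandomPerturbation:Eq:SemigroupExponentialConvergence} by a synchronous coupling with a stationary copy, which is more self-contained than the paper's route: the paper quotes the exponential mixing of the standard driftless OU process $Z(t)=\int_0^t e^{-(t-s)}dW_s^1$ and transfers it through the representation $\mathfrak{y}^{X,\varepsilon}(t)=\mathbf{E}g_w(X)+(y-\mathbf{E}g_w(X))e^{-t}+\sqrt{\varepsilon}\Sigma_1(X)Z(t)$, whereas your coupling argument yields the explicit rate $\delta=1$ with no external input. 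Second, for the ergodic-average bound \eqref{Lemma:BasicFactsFastMotionDeterministicSystemRandomPerturbation:Eq:LLNDriftSlow} you run the bias/fluctuation decomposition and the double-integral covariance computation directly on $\mathfrak{y}^{X,\varepsilon}$, while the paper invokes the corresponding estimate for $Z$ (citing Lemma 2.3 of Cerrai--Khasminskii) and then changes variables; the underlying mechanism --- conditioning on $\mathcal{F}_{s_1}$ and integrating the kernel $e^{-\delta(s_2-s_1)}$ to produce the $1/T$ prefactor --- is identical. The practical difference is that in the paper's version the factor $\sqrt{\varepsilon}\,[B_2]_{\text{Lip}}$ falls out automatically because the composed map $Z\mapsto B_2\bigl(X,\mathbf{E}g_w(X)+\sqrt{\varepsilon}\Sigma_1(X)Z\bigr)$ has Lipschitz constant $O(\sqrt{\varepsilon}\,[B_2]_{\text{Lip}})$ and value $B_2(X,\mathbf{E}g_w(X))$ at $Z=0$, whereas you must extract it by hand from the size of the Gaussian fluctuations --- a point you correctly single out as the delicate bookkeeping. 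One shared imprecision worth noting: the $O(1/T^2)$ bias contribution of the initial transient $(y-\mathbf{E}g_w(X))e^{-s}$ carries no factor of $\sqrt{\varepsilon}$, so for small $T$ it is not literally dominated by the stated right-hand side; the paper's own transfer step has the same issue, and it is harmless in the only place the lemma is applied, where $T=\varepsilon\Delta/\eta\to\infty$.
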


\begin{proof}
Let us first recall the auxiliary process $\mathfrak{y}^{X,\varepsilon}(t)$ in
\eqref{Eq:FastMotionDeterministicSystemRandomPerturbation}.
From \eqref{Eq:FastMotionDeterministicSystemRandomPerturbation} we have
$$
d(\mathfrak{y}^{X,\varepsilon}(t)-\mathbf{E} g_w(X))=-(\mathfrak{y}^{X,\varepsilon}(t)- \mathbf{E}
g_w(X))dt+\sqrt{\varepsilon}\Sigma_1(X)dW_t^1 \ ,
$$
so that
$$
\mathfrak{y}^{X,\varepsilon}(t)-\mathbf{E} g_w(X)=(\mathfrak{y}^{X,\varepsilon}(0)-\mathbf{E}
g_w(X))e^{-t}+\sqrt{\varepsilon}\Sigma_1(X)\int_0^t e^{-(t-s)}dW_s^1 \ .
$$

Let $Z(t)=\displaystyle{\int_0^t e^{-(t-s)}dW_s^1}$ be the OU process satisfying the stochastic differential equation
$$dZ(t)=-Z(t)dt+dW_t^1 \ , \ Z(0)=0\in \mathbb{R}^m \ .$$
Thus we have the explicit representation
\begin{equation}\label{Lemma:BasicFactsFastMotionDeterministicSystemRandomPerturbation:Eq:FastMotionDeterministicSystemRandomPerturbationExplicitSolutionOU}
\mathfrak{y}^{X,\varepsilon}(t)=\mathbf{E} g_w(X)+(\mathfrak{y}^{X,\varepsilon}(0)-\mathbf{E}
g_w(X))e^{-t}+\sqrt{\varepsilon}\Sigma_1(X)Z(t) \ .
\end{equation}

Let $\mu(dY)\sim \mathcal{N}\left(0,\dfrac{1}{2}I_m\right)$ be the invariant measure of $Z(t)$, where $I_m$ is the identity matrix
in $\mathbb{R}^m$. Then we have the exponential mixing estimate, that for $\delta>0$ we have
\begin{equation}\label{Lemma:BasicFactsFastMotionDeterministicSystemRandomPerturbation:Eq:ExponentialMixingOU}
\left|\mathbf{E}\varphi(Z(t))-\int_{\mathbb{R}^m}\varphi(Y)\mu(dY)\right|\leq Ce^{-\delta t}[\varphi]_{\text{Lip}(\mathbb{R}^m)} \ .
\end{equation}

This, together with \eqref{Lemma:BasicFactsFastMotionDeterministicSystemRandomPerturbation:Eq:FastMotionDeterministicSystemRandomPerturbationExplicitSolutionOU},
as well as the boundedness of $\mathbf{E} g_w(X)$ in terms of $X$,
imply \eqref{Lemma:BasicFactsFastMotionDeterministicSystemRandomPerturbation:Eq:SemigroupExponentialConvergence}.

From \eqref{Lemma:BasicFactsFastMotionDeterministicSystemRandomPerturbation:Eq:ExponentialMixingOU},
by the same argument as in \cite[Lemma 2.3]{[CerraiKhasminskii2009]}, we obtain
\begin{align}\label{Lemma:BasicFactsFastMotionDeterministicSystemRandomPerturbation:Eq:LLNOUDriftSlow}
& \mathbf{E}\left|\dfrac{1}{T}\int_{t}^{t+T} B_2(X, Z(s))ds -\int_{\mathbb{R}^m}B_2(X,Y)\mu(dY)\right|^2_{\mathbb{R}^n}
\nonumber\\ \leq&
\dfrac{C}{T}\left([B_2]_{\text{Lip}}
+|B_2(X,0)|_{\mathbb{R}^n}\right)^2
\ .
\end{align}

From here, by making use of the representation \eqref{Lemma:BasicFactsFastMotionDeterministicSystemRandomPerturbation:Eq:FastMotionDeterministicSystemRandomPerturbationExplicitSolutionOU},
we obtain \eqref{Lemma:BasicFactsFastMotionDeterministicSystemRandomPerturbation:Eq:LLNDriftSlow}.

Moreover, by \eqref{Lemma:BasicFactsFastMotionDeterministicSystemRandomPerturbation:Eq:FastMotionDeterministicSystemRandomPerturbationExplicitSolutionOU}
we infer that
$$\begin{array}{ll}
 \mathbf{E}|\mathfrak{y}^{X,\varepsilon}(t)|_{\mathbb{R}^m}^2
 & \leq C(1+e^{-2t}+e^{-2t}|\mathfrak{y}^{X,\varepsilon}(0)|_{\mathbb{R}^n}^2)
+C\varepsilon\mathbf{E}|Z(t)|_{\mathbb{R}^m}^2
\\
& =C(1+e^{-2t})+\dfrac{C\varepsilon}{2}(1-e^{-2t})
\\
& \leq C(1+e^{-2t}|\mathfrak{y}^{X,\varepsilon}(0)|_{\mathbb{R}^n}^2) \ ,
\end{array}$$
which is \eqref{Lemma:BasicFactsFastMotionDeterministicSystemRandomPerturbation:Eq:L2BoundedNessFastMotionGeneratedByOU}.

Finally, \eqref{Lemma:BasicFactsFastMotionDeterministicSystemRandomPerturbation:Eq:IntegrationFastMotionAgainstInvariantMeasure}
 is a result of \eqref{Eq:InvariantMeasureOUSmallDiffusion} and the fact that $\mathbf{E} g_w(X)$ and $\Sigma_1(X)$ are uniformly bounded in $X$.
\end{proof}

Now we will derive the averaging principle
following the classical method in \cite{[Khasminskii1968AveragingSDE]}.
Let $T>0$. Let us consider a partition of the time interval $[0,T]$ into intervals of the same length $\Delta>0$. Let us introduce the auxiliary processes
$\widehat{Y}^{\varepsilon,\eta}_t$, $\widehat{X}^{\varepsilon.\eta}_t$ by means of the relations
\begin{equation}\label{Eq:DiffusionLimitTimeChanged:AuxiliaryProcessYhat}
\begin{array}{l}
\displaystyle{\widehat{Y}^{\varepsilon,\eta}_t=Y^{\varepsilon,\eta}_{k\Delta}-\dfrac{\varepsilon}{\eta}\int_{k\Delta}^t \widehat{Y}^{\varepsilon,\eta}_sds
+\dfrac{\varepsilon}{\eta}\int_{k\Delta}^t B_1(X_{k\Delta}^{\varepsilon,\eta})ds+\dfrac{\varepsilon}{\sqrt{\eta}}\int_{k\Delta}^t \Sigma_1(X^{\varepsilon,\eta}_{k\Delta})dW_s^1 \ , }
\\
\qquad \qquad \qquad \ t\in [k\Delta, (k+1)\Delta] \ ,
\end{array}
\end{equation}
\begin{equation}\label{Eq:DiffusionLimitTimeChanged:AuxiliaryProcessXhat}
\widehat{X}^{\varepsilon,\eta}_t=x_0+\int_0^t B_2(X^{\varepsilon,\eta}_{[s/\Delta]\Delta}, \widehat{Y}^{\varepsilon,\eta}_s)ds+\sqrt{\eta}\int_0^t
\Sigma_2(X^{\varepsilon,\eta}_{[s/\Delta]\Delta}, \widehat{Y}^{\varepsilon,\eta}_s)dW_s^2 \ .
\end{equation}

\begin{lemma}\label{Lemma:L2ClosessYandYhat}
The interval length $\Delta=\Delta(\eta)$ can be chosen such that $\eta^{-1}\Delta(\eta)\rightarrow \infty$, $\Delta(\eta)\rightarrow 0$ as $\eta \rightarrow 0$ and
for any small $0<\kappa<1$ we have
\begin{equation}\label{Lemma:L2ClosessYandYhat:Eq:L2ClosessYandYhat}
\mathbf{E} |Y_t^{\varepsilon,\eta}-\widehat{Y}_t^{\varepsilon,\eta}|_{\mathbb{R}^m}^2\leq C\varepsilon^2\eta^{1-\kappa}\rightarrow 0
\end{equation}
uniformly in $x_0\in \mathbb{R}^n$, $y_0\in \mathbb{R}^m$ and $t\in [0,T]$.
\end{lemma}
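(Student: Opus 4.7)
The plan is to follow Khasminskii's discretization approach. On each subinterval $[k\Delta,(k+1)\Delta]$, the difference $U_t := Y^{\varepsilon,\eta}_t-\widehat{Y}^{\varepsilon,\eta}_t$ starts from $U_{k\Delta}=0$ and satisfies a linear Ornstein--Uhlenbeck type SDE whose inhomogeneities are the Lipschitz increments $B_1(X_s^{\varepsilon,\eta})-B_1(X_{k\Delta}^{\varepsilon,\eta})$ in the drift and $\Sigma_1(X_s^{\varepsilon,\eta})-\Sigma_1(X_{k\Delta}^{\varepsilon,\eta})$ in the diffusion. This reduces the problem to controlling the modulus of continuity of the slow process $X^{\varepsilon,\eta}$ on a time window of length $\Delta$.

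The first step is to write $U_t$ in the variation-of-constants (mild) form
\begin{equation*}
U_t=\frac{\varepsilon}{\eta}\int_{k\Delta}^t e^{-\frac{\varepsilon}{\eta}(t-s)}\bigl[B_1(X_s^{\varepsilon,\eta})-B_1(X_{k\Delta}^{\varepsilon,\eta})\bigr]ds+\frac{\varepsilon}{\sqrt{\eta}}\int_{k\Delta}^t e^{-\frac{\varepsilon}{\eta}(t-s)}\bigl[\Sigma_1(X_s^{\varepsilon,\eta})-\Sigma_1(X_{k\Delta}^{\varepsilon,\eta})\bigr]dW_s^1
\end{equation*}
for $t\in[k\Delta,(k+1)\Delta]$, and then to apply Cauchy--Schwarz to the drift integral---using that $\int_{k\Delta}^t(\varepsilon/\eta)e^{-\varepsilon(t-s)/\eta}ds\le 1$---together with the matrix It\^o isometry \eqref{Eq:KiferMatrixStochasticIntegralItoIsometry} on the stochastic integral. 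Combined with Lipschitz continuity of $B_1$ and $\Sigma_1$, the exponential weight cancels the large prefactors $\varepsilon/\eta$ and $\varepsilon/\sqrt{\eta}$ and yields an estimate of the form
\begin{equation*}
\mathbf{E}|U_t|_{\mathbb{R}^m}^2\le C\bigl([B_1]_{\text{Lip}}^2+\varepsilon[\Sigma_1]_{\text{Lip}}^2\bigr)\sup_{s\in[k\Delta,(k+1)\Delta]}\mathbf{E}|X_s^{\varepsilon,\eta}-X_{k\Delta}^{\varepsilon,\eta}|_{\mathbb{R}^n}^2.
\end{equation*}

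The second step bounds the modulus of continuity of the slow motion: since $B_2$ and $\Sigma_2$ are uniformly bounded (because $f$ and $g$ are compactly supported), a direct Cauchy--Schwarz on the drift and It\^o isometry on the diffusion in the SDE for $X^{\varepsilon,\eta}$ give $\mathbf{E}|X_s^{\varepsilon,\eta}-X_{k\Delta}^{\varepsilon,\eta}|_{\mathbb{R}^n}^2\le C(\Delta^2+\eta\Delta)$ for $s\in[k\Delta,(k+1)\Delta]$. Substituting back yields $\mathbf{E}|U_t|_{\mathbb{R}^m}^2\le C(\Delta^2+\eta\Delta)$ with a constant independent of $k$ and of the initial data (since $U_{k\Delta}=0$), giving uniformity in $t\in[0,T]$ and in $x_0,y_0$.

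Finally, I would choose $\Delta(\eta)=\varepsilon\eta^{(1-\kappa)/2}$: then $\Delta/\eta=\varepsilon\eta^{-(1+\kappa)/2}\to\infty$ and $\Delta\to 0$ as $\eta\to 0$, while $\Delta^2=\varepsilon^2\eta^{1-\kappa}$ and $\eta\Delta=\varepsilon\eta^{(3-\kappa)/2}\le\varepsilon^2\eta^{1-\kappa}$ for $\eta$ sufficiently small, delivering the claimed bound $\mathbf{E}|U_t|_{\mathbb{R}^m}^2\le C\varepsilon^2\eta^{1-\kappa}$. The main technical obstacle is the careful bookkeeping of $\varepsilon$ and $\eta$ powers in the mild form---the drift carries the large prefactor $\varepsilon/\eta$ and the stochastic term carries $\varepsilon/\sqrt{\eta}$---and verifying that the exponential dissipation $e^{-\varepsilon(t-s)/\eta}$ on a subinterval of length $\Delta\gg\eta/\varepsilon$ is enough to tame them without generating uncontrolled inverse powers of $\varepsilon$ or $\eta$.
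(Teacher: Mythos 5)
Your proof is correct, but it takes a genuinely different route from the paper's. The paper works directly with the integral identity for $Y^{\varepsilon,\eta}_t-\widehat{Y}^{\varepsilon,\eta}_t$, keeps the term $-\tfrac{\varepsilon}{\eta}\int_{k\Delta}^t(Y_s^{\varepsilon,\eta}-\widehat{Y}_s^{\varepsilon,\eta})ds$ as if it could be expansive, applies Cauchy--Schwarz (picking up factors $\tfrac{\varepsilon^2}{\eta^2}\Delta$), and then invokes Gronwall; this produces the multiplicative factor $\exp\bigl(C\varepsilon^2\Delta^2/\eta^2\bigr)$, which is exactly what forces the near-minimal choice $\Delta=\eta\sqrt[4]{\ln(\eta^{-1})}$ and the loss of $\eta^{-\kappa}$. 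You instead pass to the variation-of-constants form and exploit the dissipativity of the $-\tfrac{\varepsilon}{\eta}U$ term: the kernel $\tfrac{\varepsilon}{\eta}e^{-\varepsilon(t-s)/\eta}$ has total mass at most $1$ for the drift, and the It\^{o} isometry turns $\tfrac{\varepsilon^2}{\eta}\int e^{-2\varepsilon(t-s)/\eta}ds$ into an $O(\varepsilon)$ factor, so no Gronwall step and no exponential blow-up occur. This is cleaner and strictly stronger: your bound $C(\Delta^2+\eta\Delta)$ permits $\Delta$ polynomially larger than $\eta$, and with the single choice $\Delta=\varepsilon\sqrt{\eta}$ you would even get $\mathbf{E}|Y_t^{\varepsilon,\eta}-\widehat{Y}_t^{\varepsilon,\eta}|^2\leq C\varepsilon^2\eta$, i.e.\ the lemma with $\kappa=0$. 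Two small caveats. First, as written you choose $\Delta$ depending on $\kappa$, whereas the lemma (and its use in Lemma \ref{Lemma:ClosenessXToXhat} and Proposition \ref{Proposition:ConvergenceXEpsEtaToXEps}) wants one $\Delta(\eta)$ serving all $\kappa$; this is trivially repaired by fixing $\Delta=\varepsilon\sqrt{\eta}$ as above, though note that the downstream error in Proposition \ref{Proposition:ConvergenceXEpsEtaToXEps} contains the term $\eta/(\varepsilon\Delta)$, so whichever $\Delta$ you commit to here must also be tracked there (yours makes that term smaller, so nothing breaks). Second, absorbing $\eta\Delta$ into $\varepsilon^2\eta^{1-\kappa}$ requires $\eta^{(1+\kappa)/2}\leq\varepsilon$, i.e.\ ``$\eta$ small'' depends on the fixed $\varepsilon$ --- worth stating, but harmless since $\varepsilon$ is frozen throughout this lemma.
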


\begin{proof}

In fact we can write, for $t\in [k\Delta, (k+1)\Delta]$, that
$$\begin{array}{ll}
& \mathbf{E} |Y_t^{\varepsilon,\eta}-\widehat{Y}_t^{\varepsilon,\eta}|_{\mathbb{R}^m}^2
\\
= & \displaystyle{\mathbf{E}\left|
-\dfrac{\varepsilon}{\eta}\int_{k\Delta}^t (Y_s^{\varepsilon,\eta}-\widehat{Y}_s^{\varepsilon,\eta})ds+\dfrac{\varepsilon}{\eta}\int_{k\Delta}^t [B_1(X^{\varepsilon,\eta}_s)-B_1(X^{\varepsilon,\eta}_{k\Delta})]ds\right.}
\\
& \displaystyle{\left.\qquad \qquad +
\dfrac{\varepsilon}{\sqrt{\eta}}\int_{k\Delta}^t [\Sigma_1(X_s^{\varepsilon,\eta})-\Sigma_1(X_{k\Delta}^{\varepsilon,\eta})]dW_s^1\right|_{\mathbb{R}^m}^2}
\\
= & \displaystyle{C\dfrac{\varepsilon^2}{\eta^2}\Delta\int_{k\Delta}^t \mathbf{E}|Y^{\varepsilon,\eta}_s-\widehat{Y}^{\varepsilon,\eta}_s|_{\mathbb{R}^m}^2ds
+C\dfrac{\varepsilon^2}{\eta^2}\Delta\int_{k\Delta}^t \mathbf{E}|X^{\varepsilon,\eta}_s-X^{\varepsilon,\eta}_{k\Delta}|_{\mathbb{R}^n}^2ds}
\\
& \displaystyle{\qquad \qquad +
C\dfrac{\varepsilon^2}{\eta}\int_{k\Delta}^t \mathbf{E}|X^{\varepsilon,\eta}_s-X^{\varepsilon,\eta}_{k\Delta}|_{\mathbb{R}^n}^2ds \ .}
\end{array}$$

It follows from the boundedness of the coefficients of the stochastic equation \eqref{Eq:DiffusionLimitTimeChangedStandardForm}
for $X^{\varepsilon,\eta}$ that we have
\begin{equation}\label{Lemma:L2ClosessYandYhat:Eq:ClosenessDiscretizedSlowMotion}
\mathbf{E}|X_s^{\varepsilon,\eta}-X_{k\Delta}^{\varepsilon,\eta}|_{\mathbb{R}^n}^2\leq C|s-k\Delta|^2\leq C\Delta^2 \ ,
\end{equation}
for $0<\Delta<1$ and $s\in [k\Delta, (k+1)\Delta]$. Therefore we have
$$\mathbf{E} |Y_t^{\varepsilon,\eta}-\widehat{Y}_t^{\varepsilon,\eta}|_{\mathbb{R}^m}^2
\leq \displaystyle{C\varepsilon^2\left(\dfrac{\Delta}{\eta^2}+\dfrac{1}{\eta}\right)\Delta^2
+C\varepsilon^2\dfrac{\Delta}{\eta^2}\int_{k\Delta}^t \mathbf{E}|Y^{\varepsilon,\eta}_s-\widehat{Y}^{\varepsilon,\eta}_s|_{\mathbb{R}^m}^2ds
 \ .}
$$
By Gronwall's inequality this implies that we have, for each $k$ and every $t\in [k\Delta, (k+1)\Delta]$, that
$$\mathbf{E} |Y_t^{\varepsilon,\eta}-\widehat{Y}_t^{\varepsilon,\eta}|_{\mathbb{R}^m}^2
\leq \displaystyle{C\varepsilon^2\left(\dfrac{\Delta}{\eta^2}+\dfrac{1}{\eta}\right)\Delta^2
\exp\left(C\varepsilon^2\dfrac{\Delta^2}{\eta^2}\right)
 \ .}
$$

We then pick
\begin{equation}\label{Lemma:L2ClosessYandYhat:Eq:ChoiceOfDelta}
\Delta=\Delta(\eta)=\eta\sqrt[4]{\ln(\eta^{-1})}
\end{equation}
and we conclude \eqref{Lemma:L2ClosessYandYhat:Eq:L2ClosessYandYhat}
by making use of the asymptotic that for any small $a>0$ fixed,
we have $\dfrac{\ln \eta^{-1}}{\eta^{-a}}\rightarrow 0$, $\dfrac{\sqrt{\ln(\eta^{-1})}}{\ln(\eta^{-a})}\rightarrow 0$ as $\eta \rightarrow 0$.
In fact, we have
$$\begin{array}{ll}
C\varepsilon^2\left(\dfrac{\Delta}{\eta^2}+\dfrac{1}{\eta}\right)\Delta^2
\exp\left(C\varepsilon^2\dfrac{\Delta^2}{\eta^2}\right)
 & \leq C\varepsilon^2 (\eta^{-\frac{a}{2}}+\eta^{-\frac{a}{4}})\eta\cdot \eta^{-\frac{a}{4}}\exp(C\varepsilon^2 \ln (\eta^{-a}))
 \\
& \leq C\varepsilon^2 (\eta^{-\frac{a}{2}}+\eta^{-\frac{a}{4}})\eta\cdot \eta^{-\frac{a}{4}}\eta^{-aC\varepsilon^2} \ ,
\end{array}$$
and thus we can pick $\kappa=\left(\dfrac{3}{4}+C\varepsilon^2\right)a>0$ to be any small positive number.
\end{proof}

\begin{lemma}\label{Lemma:ClosenessXToXhat}
For any small $0<\kappa<1$ we have
\begin{equation}\label{Lemma:ClosenessXToXhat:Eq:ClosenessXToXhat}
\sup\limits_{0\leq t\leq T}\mathbf{E}|X^{\varepsilon,\eta}_t-\widehat{X}^{\varepsilon,\eta}_t|_{\mathbb{R}^n}^2\leq C(T^2+T^2\varepsilon^2+1)\eta^{1-\kappa} \rightarrow 0 \ .
\end{equation}
\end{lemma}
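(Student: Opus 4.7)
The plan is to bound the difference $X^{\varepsilon,\eta}_t-\widehat{X}^{\varepsilon,\eta}_t$ directly by subtracting the integral equations for the two processes and using the Lipschitz continuity of $B_2$ and $\Sigma_2$. Since both processes start at $x_0$, we get
\begin{equation*}
X^{\varepsilon,\eta}_t-\widehat{X}^{\varepsilon,\eta}_t=\int_0^t\!\!\bigl[B_2(X^{\varepsilon,\eta}_s,Y^{\varepsilon,\eta}_s)-B_2(X^{\varepsilon,\eta}_{[s/\Delta]\Delta},\widehat{Y}^{\varepsilon,\eta}_s)\bigr]ds+\sqrt{\eta}\int_0^t\!\!\bigl[\Sigma_2(X^{\varepsilon,\eta}_s,Y^{\varepsilon,\eta}_s)-\Sigma_2(X^{\varepsilon,\eta}_{[s/\Delta]\Delta},\widehat{Y}^{\varepsilon,\eta}_s)\bigr]dW_s^2.
\end{equation*}
The first step is to take the squared norm, apply Cauchy--Schwarz to the drift part and the It\^{o} isometry \eqref{Eq:KiferMatrixStochasticIntegralItoIsometry} to the stochastic part, and use the uniform Lipschitz continuity of $B_2$ and $\Sigma_2$ (already noted after \eqref{Eq:Quantity:A2}). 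This yields, for $t\in[0,T]$,
\begin{equation*}
\mathbf{E}|X^{\varepsilon,\eta}_t-\widehat{X}^{\varepsilon,\eta}_t|_{\mathbb{R}^n}^2\leq C(T+\eta)\int_0^t\Bigl(\mathbf{E}|X^{\varepsilon,\eta}_s-X^{\varepsilon,\eta}_{[s/\Delta]\Delta}|_{\mathbb{R}^n}^2+\mathbf{E}|Y^{\varepsilon,\eta}_s-\widehat{Y}^{\varepsilon,\eta}_s|_{\mathbb{R}^m}^2\Bigr)ds.
\end{equation*}

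The second step is to plug in two bounds already available: estimate \eqref{Lemma:L2ClosessYandYhat:Eq:ClosenessDiscretizedSlowMotion} from the preceding lemma gives $\mathbf{E}|X^{\varepsilon,\eta}_s-X^{\varepsilon,\eta}_{[s/\Delta]\Delta}|_{\mathbb{R}^n}^2\leq C\Delta^2$, while Lemma \ref{Lemma:L2ClosessYandYhat} gives $\mathbf{E}|Y^{\varepsilon,\eta}_s-\widehat{Y}^{\varepsilon,\eta}_s|_{\mathbb{R}^m}^2\leq C\varepsilon^2\eta^{1-\kappa}$ uniformly in $s\in[0,T]$. Both bounds are uniform in $s$, so no Gronwall argument is required; a single integration gives
\begin{equation*}
\sup_{0\leq t\leq T}\mathbf{E}|X^{\varepsilon,\eta}_t-\widehat{X}^{\varepsilon,\eta}_t|_{\mathbb{R}^n}^2\leq C T(T+\eta)\bigl(\Delta^2+\varepsilon^2\eta^{1-\kappa}\bigr).
\end{equation*}

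The third step is to use the specific choice \eqref{Lemma:L2ClosessYandYhat:Eq:ChoiceOfDelta}, namely $\Delta=\eta\sqrt[4]{\ln(\eta^{-1})}$, so that $\Delta^2=\eta^2\sqrt{\ln(\eta^{-1})}$. For every $\kappa>0$ the asymptotic $\sqrt{\ln(\eta^{-1})}\,\eta^{1-\kappa}\to 0$ as $\eta\to 0$ implies $\Delta^2\leq \eta^{1-\kappa}$ for all sufficiently small $\eta$ (possibly after enlarging $\kappa$ slightly; since $\kappa$ is already arbitrary, this is harmless). Substituting and absorbing $T\eta$ into $T^2+1$ using $2T\eta\leq T^2+\eta^2\leq T^2+1$ for $0<\eta<1$, the bound reduces to $C(T^2+T^2\varepsilon^2+1)\eta^{1-\kappa}$, which is the desired conclusion.

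The only mildly delicate point, rather than an obstacle, is the bookkeeping at the end: one must verify that the $\Delta^2$ term from discretizing the slow motion is dominated by $\varepsilon^2\eta^{1-\kappa}$ from Lemma \ref{Lemma:L2ClosessYandYhat} given the particular choice of $\Delta(\eta)$, and must arrange the $T$ and $\eta$ factors so that they collapse into the clean form $T^2+T^2\varepsilon^2+1$. All heavy lifting --- the exponential mixing of the fast OU process, the frozen-slow-component approximation, and the choice of $\Delta$ balancing discretization error against the averaging error --- was performed in Lemmas \ref{Lemma:L2BoundedNessSlowFastProcessTimeChanged}--\ref{Lemma:L2ClosessYandYhat}, so the present lemma is essentially a Lipschitz-plus-It\^{o}-isometry combination of those inputs.
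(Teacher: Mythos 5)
Your proposal is correct and follows essentially the same route as the paper: subtract the two integral equations, control the drift term by Cauchy--Schwarz and the martingale term by the It\^{o} isometry together with the Lipschitz continuity of $B_2$ and $\Sigma_2$, insert the bounds \eqref{Lemma:L2ClosessYandYhat:Eq:ClosenessDiscretizedSlowMotion} and \eqref{Lemma:L2ClosessYandYhat:Eq:L2ClosessYandYhat}, and absorb $\Delta^2=\eta^2\sqrt{\ln(\eta^{-1})}$ into $\eta^{1-\kappa}$. The only cosmetic difference is that you exploit the Lipschitz continuity of $\Sigma_2$ in the stochastic term where the paper settles for a cruder $C\eta$ bound, which changes nothing in the conclusion.
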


\begin{proof}

By \eqref{Eq:DiffusionLimitTimeChangedStandardForm} we know that we can write the process $X_t^{\varepsilon,\eta}$ as an integral equation
\begin{equation}\label{Lemma:ClosenessXToXhat:Eq:DiffusionLimitTimeChanged:SlowProcessX}
X^{\varepsilon,\eta}_t=x_0+\int_0^t B_2(X^{\varepsilon,\eta}_s, Y^{\varepsilon,\eta}_s)ds+\sqrt{\eta}\int_0^t
\Sigma_2(X^{\varepsilon,\eta}_s, Y^{\varepsilon,\eta}_s)dW_s^2 \ .
\end{equation}

Comparing \eqref{Eq:DiffusionLimitTimeChanged:AuxiliaryProcessXhat} and
\eqref{Lemma:ClosenessXToXhat:Eq:DiffusionLimitTimeChanged:SlowProcessX} we see that
$$\begin{array}{ll}
&\mathbf{E}|X^{\varepsilon,\eta}_t-\widehat{X}^{\varepsilon,\eta}_t|_{\mathbb{R}^n}^2
\\
\leq & \displaystyle{C\mathbf{E}\left|\int_0^t [B_2(X_s^{\varepsilon,\eta},Y_s^{\varepsilon,\eta})-B_2(X^{\varepsilon,\eta}_{[s/\Delta]\Delta}, \widehat{Y}_s^{\varepsilon,\eta})]ds\right|_{\mathbb{R}^n}^2}
\\
&\displaystyle{\qquad \qquad \qquad +\eta C\mathbf{E}\left|\int_0^t [\Sigma_2(X_s^{\varepsilon,\eta}, Y_s^{\varepsilon,\eta})-
\Sigma_2(X_{[s/\Delta]\Delta}^{\varepsilon,\eta}, \widehat{Y}_s^{\varepsilon,\eta})]dW_s^2\right|_{\mathbb{R}^n}^2} \ ,
\end{array}$$
and so that, by \eqref{Lemma:L2ClosessYandYhat:Eq:L2ClosessYandYhat}, the Cauchy--Schwarz inequality and Lipschitz continuity of $B_2(X,Y)$
with respect to $X$, $Y$ we know that
$$\begin{array}{ll}
&\sup\limits_{0\leq t\leq T}\mathbf{E}|X^{\varepsilon,\eta}_t-\widehat{X}^{\varepsilon,\eta}_t|_{\mathbb{R}^n}^2
\\
\leq & \displaystyle{CT\sum\limits_{k=0}^{[T/\Delta]} \int_{k\Delta}^{(k+1)\Delta}\left(\mathbf{E}|X_s^{\varepsilon,\eta}-X^{\varepsilon,\eta}_{k\Delta}|_{\mathbb{R}^n}^2+\mathbf{E}|Y_s^{\varepsilon,\eta}-\widehat{Y}_s^{\varepsilon,\eta}|_{\mathbb{R}^m}^2\right)ds+C\eta+C\Delta}
\\
\leq & \displaystyle{CT^2\Delta^2+CT\int_0^T \mathbf{E}|Y_s^{\varepsilon,\eta}-\widehat{Y}_s^{\varepsilon,\eta}|_{\mathbb{R}^m}^2 ds+C\eta+C\Delta}
\\
\leq & \displaystyle{CT^2\Delta^2+CT^2\varepsilon^2\eta^{1-\kappa}+C\eta+C\Delta \ .}
\end{array}$$
From the asymptotic that for any small $a>0$ fixed,
we have $\dfrac{\ln \eta^{-1}}{\eta^{-a}}\rightarrow 0$, we have $\Delta\leq \eta^{1-\frac{a}{4}}$.
This implies that
$$\begin{array}{ll}
CT^2\Delta^2+CT^2\varepsilon^2\eta^{1-\kappa}+C\eta+C\Delta & \leq CT^2\eta^{2-\frac{a}{2}}+CT^2\varepsilon^2\eta^{1-\kappa}+C\eta+C\eta^{1-\frac{a}{4}}
\\
& \leq C(T^2+T^2\varepsilon^2+1)\eta^{(2-\frac{a}{2})\wedge(1-\kappa)\wedge(1-\frac{a}{4})} \ .
\end{array}$$
Thus for possibly another small $\kappa>0$
$$
\sup\limits_{0\leq t\leq T}\mathbf{E}|X^{\varepsilon,\eta}_t-\widehat{X}^{\varepsilon,\eta}_t|_{\mathbb{R}^n}^2\leq C(T^2+T^2\varepsilon^2+1)\eta^{1-\kappa} \rightarrow 0
$$
as $\varepsilon\rightarrow 0$.
\end{proof}

\begin{proposition}\label{Proposition:ConvergenceXEpsEtaToXEps}
For any $T>0$ and $\varepsilon>0,\eta>0$ small enough, for $0\leq t\leq T$ and any small $0<\kappa<1$ we have
\begin{equation}\label{Proposition:ConvergenceXEpsEtaToXEps:Eq:ClosenessXEpsEtaToXEps}
\mathbf{E}|X^{\varepsilon,\eta}_t-X^\varepsilon_t|_{\mathbb{R}^n}^2\leq \dfrac{C}{\varepsilon}\dfrac{1}{\sqrt[4]{\ln(\eta^{-1})}} \ ,
\end{equation}
for some constant $C=C(T)>0$.
\end{proposition}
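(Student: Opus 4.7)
The plan is to invoke the triangle inequality
\[
\mathbf{E}|X^{\varepsilon,\eta}_t-X^\varepsilon_t|_{\mathbb{R}^n}^2
\leq 2\,\mathbf{E}|X^{\varepsilon,\eta}_t-\widehat{X}^{\varepsilon,\eta}_t|_{\mathbb{R}^n}^2
+2\,\mathbf{E}|\widehat{X}^{\varepsilon,\eta}_t-X^\varepsilon_t|_{\mathbb{R}^n}^2
\]
and to use the auxiliary process $\widehat{X}^{\varepsilon,\eta}_t$ of \eqref{Eq:DiffusionLimitTimeChanged:AuxiliaryProcessXhat} as a bridge. Lemma \ref{Lemma:ClosenessXToXhat} already disposes of the first term with a bound of order $\eta^{1-\kappa}$, which is negligible compared with the target bound; all the real work lies in the second term.

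For that second term, I would subtract the integral equations for $\widehat{X}^{\varepsilon,\eta}_t$ and for $X^\varepsilon_t$ and decompose the drift error as
\[
B_2(X^{\varepsilon,\eta}_{[s/\Delta]\Delta},\widehat{Y}^{\varepsilon,\eta}_s)-\overline{B_2(X^\varepsilon_s,Y)}^{\varepsilon}(X^\varepsilon_s)
=R^{(1)}_s+R^{(2)}_s,
\]
with
\[
R^{(1)}_s=B_2(X^{\varepsilon,\eta}_{[s/\Delta]\Delta},\widehat{Y}^{\varepsilon,\eta}_s)-\overline{B_2}^{\varepsilon}(X^{\varepsilon,\eta}_{[s/\Delta]\Delta}),\qquad
R^{(2)}_s=\overline{B_2}^{\varepsilon}(X^{\varepsilon,\eta}_{[s/\Delta]\Delta})-\overline{B_2}^{\varepsilon}(X^\varepsilon_s).
\]
The term $R^{(2)}_s$ is Lipschitz in its argument and, via \eqref{Lemma:L2ClosessYandYhat:Eq:ClosenessDiscretizedSlowMotion} together with Lemma \ref{Lemma:ClosenessXToXhat} and the triangle inequality, is controlled by $\mathbf{E}|\widehat{X}^{\varepsilon,\eta}_s-X^\varepsilon_s|_{\mathbb{R}^n}^2$ plus lower-order terms; this piece will be absorbed by Gronwall's inequality. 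The diffusion term in $\widehat{X}^{\varepsilon,\eta}_t$ contributes $\mathcal{O}(\eta)$ via \eqref{Eq:KiferMatrixStochasticIntegralItoIsometry}, which is again negligible.

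The central step is estimating the contribution from $R^{(1)}_s$. On each interval $[k\Delta,(k+1)\Delta]$, conditionally on $\mathcal{F}_{k\Delta}$ the auxiliary process $\widehat{Y}^{\varepsilon,\eta}$ solves the linear SDE with frozen coefficients $X^{\varepsilon,\eta}_{k\Delta}$. Applying the time change $u=(s-k\Delta)\varepsilon/\eta$ transforms $\widehat{Y}^{\varepsilon,\eta}$ on $[k\Delta,(k+1)\Delta]$ into the OU process $\mathfrak{y}^{X^{\varepsilon,\eta}_{k\Delta},\varepsilon}$ of \eqref{Eq:FastMotionDeterministicSystemRandomPerturbation} running over a time interval of length $T_u:=\Delta\varepsilon/\eta\to\infty$. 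Writing
\[
\int_{k\Delta}^{(k+1)\Delta}R^{(1)}_s\,ds=\Delta\Bigl(\tfrac{1}{T_u}\textstyle\int_0^{T_u}B_2(X^{\varepsilon,\eta}_{k\Delta},\mathfrak{y}^{X^{\varepsilon,\eta}_{k\Delta},\varepsilon}(u))\,du-\overline{B_2}^\varepsilon(X^{\varepsilon,\eta}_{k\Delta})\Bigr),
\]
the conditional second moment estimate \eqref{Lemma:BasicFactsFastMotionDeterministicSystemRandomPerturbation:Eq:LLNDriftSlow}, combined with the uniform boundedness of the coefficients and \eqref{Lemma:L2BoundedNessSlowFastProcessTimeChanged:Eq:YProcessBound}, gives
\[
\mathbf{E}\Bigl|\int_{k\Delta}^{(k+1)\Delta}R^{(1)}_s\,ds\Bigr|_{\mathbb{R}^n}^2\leq \frac{C\Delta^2}{T_u}=\frac{C\Delta\eta}{\varepsilon}.
\]
Summing by Cauchy--Schwarz over the $N=\lceil T/\Delta\rceil$ sub-intervals yields
\[
\mathbf{E}\Bigl|\int_0^t R^{(1)}_s\,ds\Bigr|_{\mathbb{R}^n}^2\leq N\sum_{k=0}^{N-1}\mathbf{E}\Bigl|\int_{k\Delta}^{(k+1)\Delta}R^{(1)}_s\,ds\Bigr|_{\mathbb{R}^n}^2\leq \frac{CT^2\eta}{\Delta\varepsilon}.
\]

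Substituting the choice $\Delta(\eta)=\eta\sqrt[4]{\ln(\eta^{-1})}$ from \eqref{Lemma:L2ClosessYandYhat:Eq:ChoiceOfDelta} turns this bound into $CT^2/(\varepsilon\sqrt[4]{\ln(\eta^{-1})})$, which is the desired order. A Gronwall argument combining the $R^{(1)}$ contribution, the Lipschitz $R^{(2)}$ contribution, the diffusion $\mathcal{O}(\eta)$, and the bridge estimate from Lemma \ref{Lemma:ClosenessXToXhat} then gives \eqref{Proposition:ConvergenceXEpsEtaToXEps:Eq:ClosenessXEpsEtaToXEps}. The main obstacle is keeping track of the factor $\varepsilon/\eta$ through the time rescaling: one must verify that after rescaling the frozen dynamics genuinely match \eqref{Eq:FastMotionDeterministicSystemRandomPerturbation} so that \eqref{Lemma:BasicFactsFastMotionDeterministicSystemRandomPerturbation:Eq:LLNDriftSlow} is applicable with the correct effective horizon $T_u=\Delta\varepsilon/\eta$; this is precisely what produces the crucial $1/\varepsilon$ factor (and explains why the bound blows up as $\varepsilon\to 0$) and dictates the choice of $\Delta$ in \eqref{Lemma:L2ClosessYandYhat:Eq:ChoiceOfDelta}.
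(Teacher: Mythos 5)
Your proposal is correct and follows essentially the same route as the paper's own proof: the bridge through $\widehat{X}^{\varepsilon,\eta}$, the identification of $\widehat{Y}^{\varepsilon,\eta}$ on each partition block with the time-rescaled OU process $\mathfrak{y}^{X^{\varepsilon,\eta}_{k\Delta},\varepsilon}$ over the effective horizon $\Delta\varepsilon/\eta$, the per-block mixing bound $C\Delta\eta/\varepsilon$ from \eqref{Lemma:BasicFactsFastMotionDeterministicSystemRandomPerturbation:Eq:LLNDriftSlow}, the Cauchy--Schwarz summation giving $CT^2\eta/(\Delta\varepsilon)$, the choice $\Delta=\eta\sqrt[4]{\ln(\eta^{-1})}$, and the concluding Gronwall step all match. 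The only cosmetic difference is that you fold into a single term $R^{(2)}$ what the paper splits into three Lipschitz comparisons ($X^{\varepsilon,\eta}_{[s/\Delta]\Delta}$ vs $X^{\varepsilon,\eta}_s$ vs $\widehat{X}^{\varepsilon,\eta}_s$ vs $X^\varepsilon_s$), which does not change the argument.
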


\begin{proof}

By the defining equation \eqref{Eq:DiffusionLimitTimeChanged:AuxiliaryProcessYhat} of the process
$\widehat{Y}^{\varepsilon,\eta}_t$ we know that we have $\widehat{Y}^{\varepsilon,\eta}_t=\mathfrak{y}^{X_{k\Delta}^{\varepsilon,\eta}, \varepsilon}_{t\varepsilon/\eta}$
with $\mathfrak{y}^{X_{k\Delta}^{\varepsilon,\eta}, \varepsilon}_{0}=Y_{k\Delta}^{\varepsilon,\eta}$.
This, together with Lemma \ref{Lemma:BasicFactsFastMotionDeterministicSystemRandomPerturbation} estimate
\eqref{Lemma:BasicFactsFastMotionDeterministicSystemRandomPerturbation:Eq:LLNDriftSlow} imply that
\begin{equation}\label{Proposition:ConvergenceXEpsEtaToXEps:Eq:ConvergenceMixingPartFastMotion}
\begin{array}{l}
\displaystyle{\dfrac{1}{\Delta^2}\mathbf{E}\left|\int_{k\Delta}^{(k+1)\Delta}
[B_2(X^{\varepsilon,\eta}_{k\Delta}, \widehat{Y}_s^{\varepsilon,\eta})-\overline{B_2(X^{\varepsilon,\eta}_{k\Delta},Y)}^\varepsilon(X^{\varepsilon,\eta}_{k\Delta})]ds\right|^2_{\mathbb{R}^n}}
\\
\displaystyle{\qquad \qquad \leq \dfrac{C}{\dfrac{\varepsilon}{\eta}\Delta}(1+\varepsilon+\varepsilon\mathbf{E}|X_{k\Delta}^{\varepsilon,\eta}|^2_{\mathbb{R}^n}+\varepsilon\mathbf{E}|Y_{k\Delta}^{\varepsilon,\eta}|^2_{\mathbb{R}^m}) \ .}
\end{array}
\end{equation}

By making use of \eqref{Lemma:L2ClosessYandYhat:Eq:L2ClosessYandYhat},
\eqref{Lemma:L2ClosessYandYhat:Eq:ClosenessDiscretizedSlowMotion}, \eqref{Lemma:ClosenessXToXhat:Eq:ClosenessXToXhat},
\eqref{Proposition:ConvergenceXEpsEtaToXEps:Eq:ConvergenceMixingPartFastMotion},
\eqref{Lemma:RegularityBarEpsilonOperator:Eq:LipschitzContinuityBarEpsOperator} we have
{\small$$\begin{array}{ll}
&\mathbf{E}|\widehat{X}^{\varepsilon,\eta}_t-X^\varepsilon_t|_{\mathbb{R}^n}^2
\\
=& \displaystyle{\mathbf{E}\left|\int_0^t [B_2(X^{\varepsilon,\eta}_{[s/\Delta]\Delta}, \widehat{Y}_s^{\varepsilon,\eta})-\overline{B_2(X^\varepsilon_s,Y)}^\varepsilon(X^\varepsilon_s)]ds
+\sqrt{\eta}\int_0^t \Sigma_2(\widehat{X}_s^{\varepsilon,\eta}, \widehat{Y}_s^{\varepsilon,\eta})dW_s^2\right|^2_{\mathbb{R}^n}}
\\
\leq & \displaystyle{C\mathbf{E}\left|\sum\limits_{k=0}^{[t/\Delta]}\int_{k\Delta}^{(k+1)\Delta}
[B_2(X^{\varepsilon,\eta}_{k\Delta}, \widehat{Y}_s^{\varepsilon,\eta})-\overline{B_2(X^{\varepsilon,\eta}_{k\Delta},Y)}^\varepsilon(X^{\varepsilon,\eta}_{k\Delta})]ds\right|^2_{\mathbb{R}^n}}
\\
& \qquad \qquad \displaystyle{+ C\mathbf{E}\left|\int_{0}^{t}
[\overline{B_2(X^{\varepsilon,\eta}_{[s/\Delta]\Delta},Y)}^\varepsilon(X^{\varepsilon,\eta}_{[s/\Delta]\Delta})-\overline{B_2(X^{\varepsilon,\eta}_s,Y)}^\varepsilon(X^{\varepsilon,\eta}_s)]ds\right|^2_{\mathbb{R}^n}}
\\
& \qquad \qquad \displaystyle{+ C\mathbf{E}\left|\int_{0}^{t}
[\overline{B_2(X^{\varepsilon,\eta}_{s},Y)}^\varepsilon(X^{\varepsilon,\eta}_{s})-\overline{B_2(\widehat{X}^{\varepsilon,\eta}_{s},Y)}^\varepsilon(\widehat{X}^{\varepsilon,\eta}_{s})]ds\right|^2_{\mathbb{R}^n}}
\end{array}$$$$\begin{array}{ll}& \qquad \qquad \displaystyle{+ C\mathbf{E}\left|\int_{0}^{t}
[\overline{B_2(\widehat{X}^{\varepsilon,\eta}_{s},Y)}^\varepsilon(\widehat{X}^{\varepsilon,\eta}_{s})-\overline{B_2(X^\varepsilon_s,Y)}^\varepsilon(X^\varepsilon_s)]ds\right|^2_{\mathbb{R}^n}}
\\
& \qquad \qquad \qquad \displaystyle{+C\eta\mathbf{E}\left|\int_0^t \Sigma_2(\widehat{X}_s^{\varepsilon,\eta}, \widehat{Y}_s^{\varepsilon,\eta})dW_s^2\right|^2_{\mathbb{R}^n}+C\Delta}
\\
\leq & \displaystyle{C\left[\dfrac{t}{\Delta}\right]\sum\limits_{k=0}^{[t/\Delta]}\mathbf{E}\left|\int_{k\Delta}^{(k+1)\Delta}
[B_2(X^{\varepsilon,\eta}_{k\Delta}, \widehat{Y}_s^{\varepsilon,\eta})-\overline{B_2(X^{\varepsilon,\eta}_{k\Delta},Y)}^\varepsilon(X^{\varepsilon,\eta}_{k\Delta})]ds\right|_{\mathbb{R}^n}^2}
\\
& \displaystyle{  + Ct\Delta^2+Ct \max\limits_{0\leq s \leq t}\mathbf{E}|X_s^{\varepsilon,\eta}-\widehat{X}_s^{\varepsilon,\eta}|_{\mathbb{R}^n}^2
+Ct\int_0^t \mathbf{E}|\widehat{X}_s^{\varepsilon,\eta}-X_s^\varepsilon|_{\mathbb{R}^n}^2ds+C\eta+C\Delta}
\\
\leq & \displaystyle{\dfrac{C\eta}{\varepsilon\Delta}(1+\varepsilon+\varepsilon\max\limits_{0\leq s\leq t}\mathbf{E}|X_s^{\varepsilon,\eta}|_{\mathbb{R}^n}^2
+\varepsilon\max\limits_{0\leq s\leq t}\mathbf{E}|Y_s^{\varepsilon,\eta}|_{\mathbb{R}^n}^2)}
\\
& \displaystyle{ + Ct\Delta^2+Ct \max\limits_{0\leq s \leq t}\mathbf{E}|X_s^{\varepsilon,\eta}-\widehat{X}_s^{\varepsilon,\eta}|_{\mathbb{R}^n}^2
+Ct \int_0^t \mathbf{E}|\widehat{X}_s^{\varepsilon,\eta}-X_s^\varepsilon|_{\mathbb{R}^n}^2ds+C\eta+C\Delta}
\\
\leq & \displaystyle{C\dfrac{\eta}{\Delta}\left(\dfrac{1}{\varepsilon}+1+|x_0|_{\mathbb{R}^n}^2+|y_0|_{\mathbb{R}^n}^2\right) + Ct\Delta^2+Ct(T^2+T^2\varepsilon^2+1)\eta^{1-\kappa}
+C\Delta}
\\
& \qquad \qquad \qquad \displaystyle{+Ct\int_0^t \mathbf{E}|\widehat{X}_s^{\varepsilon,\eta}-X_s^\varepsilon|_{\mathbb{R}^n}^2ds \ .}
\end{array}$$}

Taking into account the choice of $\Delta$ in \eqref{Lemma:L2ClosessYandYhat:Eq:ChoiceOfDelta}, we further infer that when $\varepsilon, \eta>0$ are very small
$$
\mathbf{E}|\widehat{X}^{\varepsilon,\eta}_t-X^\varepsilon_t|_{\mathbb{R}^n}^2
\leq C\left(\dfrac{1}{\varepsilon\sqrt[4]{\ln(\eta^{-1})}}+\eta\sqrt[4]{\ln(\eta^{-1})}\right) + Ct\int_0^t \mathbf{E}|\widehat{X}_s^{\varepsilon,\eta}-X_s^\varepsilon|_{\mathbb{R}^n}^2ds \ .
$$

This ensures that
$$\mathbf{E}|\widehat{X}^{\varepsilon,\eta}_t-X^\varepsilon_t|_{\mathbb{R}^n}^2\leq C\left(\dfrac{1}{\varepsilon\sqrt[4]{\ln(\eta^{-1})}}+\eta\sqrt[4]{\ln(\eta^{-1})}\right)
\exp\left(Ct\right) \ ,$$
and thus, combining this with \eqref{Lemma:ClosenessXToXhat:Eq:ClosenessXToXhat}, we infer that as $\varepsilon,\eta>0$ are small we have,
for $0\leq t\leq T$,
$$\mathbf{E}|X^{\varepsilon,\eta}_t-X^\varepsilon_t|_{\mathbb{R}^n}^2\leq C\left(\dfrac{1}{\varepsilon\sqrt[4]{\ln(\eta^{-1})}}+\eta^{1-a}\right)
\exp\left(CT\right)+C(T^2+T^2\varepsilon^2+1)\eta^{1-\kappa} \ ,$$
so that we conclude with \eqref{Proposition:ConvergenceXEpsEtaToXEps:Eq:ClosenessXEpsEtaToXEps} by taking into account that
for any $a>0$ we have $\lim\limits_{\eta\rightarrow 0}\dfrac{\sqrt[4]{\ln(\eta^{-1})}}{\eta^{-a}}=0$.
\end{proof}

\begin{remark}
By using the corrector method in the next section, it is possible to remove $\kappa$
in the estimate \eqref{Proposition:ConvergenceXEpsEtaToXEps:Eq:ClosenessXEpsEtaToXEps}, and obtain a little bit
better upper bound $C\left(\dfrac{\eta^2}{\varepsilon^2}+\eta\right)$. However, the Khasminskii's method we use here
is more intuitive. See Lemma \ref{Lemma:ClosenessXToXhatCorrectorMethod} for a precise statement and proof. Also see Remark 2 in Section 5 for further discussion.
\end{remark}

\section{Normal deviations.}\label{sec:normal}

In this section we consider normal deviations of the process $X^{\varepsilon,\eta}(t)$ from the averaged motion $X^{\varepsilon}(t)$.
The method we use here is the corrector method. Similar techniques can be found in \cite{[KV86]}. To apply this method, a--priori
estimates of an auxiliary Poisson equation is needed, and there are various previous works dedicated to obtaining these estimates.
In the paper \cite{[HairerPavliotis2004]}, the authors considered the case when diffusion matrix is a scalar multiple of the identity matrix,
and the Poisson equation there corresponds to hypo--elliptic diffusions. Our analysis relies more on estimates obtained in \cite{[Pardoux-Veretennikov2]}
(also see \cite{[Pardoux-Veretennikov1]}, \cite{[Pardoux-Veretennikov3]}).

As in \eqref{Eq:RescaledDeviation}, we define
$$Z^{\varepsilon,\eta}(t)=\dfrac{X^{\varepsilon,\eta}(t)-X^{\varepsilon}(t)}{\sqrt{\eta}} \ .$$
By \eqref{Eq:DiffusionLimitTimeChangedStandardForm} and \eqref{Eq:AveragedSlowMotionGaussianMeasureDeterministic} we see that the
process $Z^{\varepsilon,\eta}(t)$ satisfies the integral equation
\begin{equation}\label{Eq:RescaledDeviationIntegralEquation}
Z^{\varepsilon,\eta}(t)=\dfrac{1}{\sqrt{\eta}}\int_0^t [B_2(X^{\varepsilon,\eta}_s, Y^{\varepsilon,\eta}_s)
-\overline{B_2(X^{\varepsilon}_s,Y)}^{\varepsilon}(X^{\varepsilon}_s)]ds+\int_0^t \Sigma_2(X^{\varepsilon,\eta}_s, Y^{\varepsilon,\eta}_s)dW_s^2 \ .
\end{equation}

Set
\begin{equation}\label{Eq:UProcess}
U^{\varepsilon,\eta}(t)=\dfrac{1}{\sqrt{\eta}}\int_0^t [B_2(X^{\varepsilon,\eta}_s, Y^{\varepsilon,\eta}_s)
-\overline{B_2(X^{\varepsilon}_s,Y)}^{\varepsilon}(X^{\varepsilon}_s)]ds
\end{equation}
and
\begin{equation}\label{Eq:VProcess}
V^{\varepsilon,\eta}(t)=\int_0^t \Sigma_2(X^{\varepsilon,\eta}_s, Y^{\varepsilon,\eta}_s)dW_s^2 \ .
\end{equation}

Let us introduce the infinitesimal generator of the OU process $\mathfrak{y}^{X,\varepsilon}(t)$
in \eqref{Eq:FastMotionDeterministicSystemRandomPerturbation} as the operator
\begin{equation}\label{Eq:FastProcessGenerator}
\mathcal{L}^{X,\varepsilon} f(Y)=\dfrac{\varepsilon}{2}\nabla_Y\cdot (A_1(X)\nabla_Y f(Y))+(B_1(X)-Y)\cdot \nabla_Y f(Y) \ ,
\end{equation}
and consider the auxiliary Poisson equations
\begin{equation}\label{Eq:AuxiliaryPDE}
\mathcal{L}^{X,\varepsilon} u_k(X,Y)=[B_2]_k(X,Y)-[\overline{B_2(X,Y)}^\varepsilon]_k(X)
\end{equation}
for $k=1,2,...,n$.

Since the variables $(X,Y)\in \mathbb{R}^n\times \mathbb{R}^m$, we are going to single out a unique solution
$u_k(X,Y)$ to \eqref{Eq:AuxiliaryPDE} by putting a restriction that for each $k=1,2,...,n$
\begin{equation}\label{Eq:AuxiliaryPDEUniquenessRestriction}
\int_{\mathbb{R}^n} u_k(X,Y)\mu^{X,\varepsilon}(dY)=0 \ .
\end{equation}

We have the following a--priori bounds for the solution $u_k(X,Y)$ based on \cite{[Pardoux-Veretennikov2]}.

\begin{lemma}\label{Lemma:PardouxVeretennnikovAPriorBoundsAuxiliaryPDE}
The solution $u_k(X,Y)\in \mathbf{C}^{(2)}(\mathbb{R}^n\times\mathbb{R}^m)$. Moreover,
there exist some integer $p>0$ and constant $C_k>0$ such that for each $k=1,2,...,n$ we have
\begin{equation}\label{Lemma:PardouxVeretennnikovAPriorBoundsAuxiliaryPDE:Eq:Bounds}
\begin{split}
 &\quad
|u_k(X,Y)|+|\nabla_X u_k(X,Y)|_{\mathbb{R}^n}+|\nabla^2_{X}u_k(X,Y)|_{\mathbb{R}^n\otimes \mathbb{R}^n}+|\nabla_Y u(X,Y)|_{\mathbb{R}^m}
  \\&\leq
C_k (1+|Y|_{\mathbb{R}^m}^p) \ .
\end{split}
\end{equation}
\end{lemma}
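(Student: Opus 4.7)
The plan is to apply the a--priori estimates for Poisson equations developed in \cite{[Pardoux-Veretennikov2]} (see also \cite{[Pardoux-Veretennikov1]}) to our specific operator $\mathcal{L}^{X,\varepsilon}$. First I would verify that the hypotheses of those estimates are satisfied: (i) uniform non--degeneracy of the diffusion coefficient $\varepsilon A_1(X)$, which is part of the standing assumptions; (ii) a Lyapunov/recurrence condition for the drift $B_1(X)-Y$, which is immediate since the $-Y$ term produces a linear dissipative drift with recurrence rate $1$; (iii) mean--zero centering of the right--hand side against the invariant measure $\mu^{X,\varepsilon}$, which holds by the very definition of $\overline{B_2(X,Y)}^\varepsilon(X)$; and (iv) the smoothness of all coefficients with respect to $X$ and $Y$, which follows from $f_v, g_w \in \mathbf{C}^{(4)}$ together with the compact--support assumption that also yields uniform bounds on $B_1, B_2, A_1$ and their derivatives.

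Second, the natural and transparent way to construct the solution and extract the bounds is through the probabilistic representation
\begin{equation*}
u_k(X,Y) = \int_0^\infty \left([\overline{B_2(X,Y)}^\varepsilon]_k(X) - \mathbf{E}^Y [B_2]_k(X, \mathfrak{y}^{X,\varepsilon}(t))\right) dt \ ,
\end{equation*}
where $\mathfrak{y}^{X,\varepsilon}$ is the OU process from \eqref{Eq:FastMotionDeterministicSystemRandomPerturbation} started at $Y$. The exponential mixing bound \eqref{Lemma:BasicFactsFastMotionDeterministicSystemRandomPerturbation:Eq:SemigroupExponentialConvergence} guarantees absolute convergence of the integral; the formula manifestly satisfies the normalization \eqref{Eq:AuxiliaryPDEUniquenessRestriction}; and applying $\mathcal{L}^{X,\varepsilon}$ together with Kolmogorov's backward equation shows that it solves \eqref{Eq:AuxiliaryPDE}. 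The $\mathbf{C}^{(2)}$ regularity in $(X,Y)$ follows by differentiating under the integral and dominating, using the explicit formula \eqref{Lemma:BasicFactsFastMotionDeterministicSystemRandomPerturbation:Eq:FastMotionDeterministicSystemRandomPerturbationExplicitSolutionOU}, which is jointly smooth in $X$ and in the initial datum $Y$, with $\partial_Y \mathfrak{y}^{X,\varepsilon}(t) = e^{-t} I_m$ producing the exponential damping that controls the $Y$--derivative terms.

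Third, the polynomial--in--$|Y|_{\mathbb{R}^m}$ pointwise bound is extracted by dominating the integrand: smoothness of $B_2$ combined with the moment estimate \eqref{Lemma:BasicFactsFastMotionDeterministicSystemRandomPerturbation:Eq:L2BoundedNessFastMotionGeneratedByOU} gives $\mathbf{E}^Y |\mathfrak{y}^{X,\varepsilon}(t)|_{\mathbb{R}^m}^p \leq C(1+|Y|_{\mathbb{R}^m}^p)$ which, combined with the exponential mixing rate in $t$, yields a majorant of the form $C_k(1+|Y|_{\mathbb{R}^m}^p)$. The main obstacle is the treatment of the $X$--derivatives: since $\mathcal{L}^{X,\varepsilon}$ itself depends on $X$, differentiating the Poisson equation in $X$ produces a new Poisson equation whose source involves $\partial_X \mathcal{L}^{X,\varepsilon}$ applied to $u_k$, generating a recursion that must be closed without losing the polynomial growth exponent nor blowing up constants in $\varepsilon$. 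This is precisely the content of the a--priori estimates of \cite{[Pardoux-Veretennikov2]}, and I would invoke those theorems for the existence of an exponent $p$ and constants $C_k$ uniform in $X$, rather than reprove them from scratch.
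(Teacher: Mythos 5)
Your proposal is correct and follows essentially the same route as the paper: both write $u_k$ as the time integral over $[0,\infty)$ of the OU semigroup applied to the centered source (your probabilistic expectation is exactly the paper's $\int f(X,Y')p_t(Y,Y';X)\,dY'$), and both ultimately invoke the a--priori estimates of \cite{[Pardoux-Veretennikov2]} (Theorems 2 and 3 there) for the $X$--derivative bounds and the $\mathbf{C}^{(2)}$ regularity. The only cosmetic difference is that the paper splits the time integral into $[0,1]$, handled by standard parabolic estimates via Duhamel's principle, and $[1,\infty)$, handled by the polynomial--growth/decay estimates of \cite{[Pardoux-Veretennikov2]}, whereas you rely on the explicit OU representation and exponential mixing throughout, which is equally valid here since the source is bounded and the fast process is explicitly solvable.
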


\begin{proof}
For simplicity of notations we will suppress the index $k$ throughout the proof.
Set $f(X,Y)=-\left([B_2]_k(X,Y)-[\overline{B_2(X,Y)}^\varepsilon]_k(X)\right)$.
The Poisson equation
\eqref{Eq:AuxiliaryPDE} can be written as
$$\mathcal{L}^{X,\varepsilon} u(X,Y)=-f(X,Y) \ .$$
An explicit representation of the solution $u(X,Y)$ can be formulated as
\begin{equation}\label{Lemma:PardouxVeretennnikovAPriorBoundsAuxiliaryPDE:Eq:RepSolPossion:u}
u(X,Y)=\int_0^\infty dt \int_{\mathbb{R}^m} dY' f(X, Y')p_t(Y, Y'; X) \ .
\end{equation}
Here $p_t(Y,Y';X)$ is the (parabolic) fundamental solution (transition probability density)
corresponding to the operator $\mathcal{L}^{X,\varepsilon}$, i.e.,
$$\dfrac{\partial}{\partial t}p_t(Y,Y'; X)=\mathcal{L}^{X,\varepsilon} p_t(Y, Y'; X) \ , \ p_0(Y,Y';X)=\delta(Y'-Y) \ .$$
The fact that $u_k(X,Y)\in \mathbf{C}^{(2)}(\mathbb{R}^n\times\mathbb{R}^m)$ is a consequence of Theorem 3 in \cite{[Pardoux-Veretennikov2]}.
Let us define
\begin{equation}\label{Lemma:PardouxVeretennnikovAPriorBoundsAuxiliaryPDE:Eq:TransitionDensityf}
\begin{array}{ll}
p_t(Y,f;X)&= \displaystyle{\int_{\mathbb{R}^m} f(X,Y')p_t(Y,Y';X)dY'}
\\
&= \displaystyle{\int_{\mathbb{R}^m} f(X,Y')[p_t(Y,Y';X)-p_{\infty}(Y';X)]dY' \ .}
\end{array}
\end{equation}

Here $p_{\infty}(Y; X)$ is the density function for the invariant measure $\mu^{X,\varepsilon}(dY)$ in
\eqref{Eq:InvariantMeasureOUSmallDiffusion}. Notice that the way we define the averaging
operator in \eqref{Eq:BarEpsOperator} guarantees that
\begin{equation}\label{Lemma:PardouxVeretennnikovAPriorBoundsAuxiliaryPDE:Eq:CenteringConditionf}
\displaystyle{\int_{\mathbb{R}^m} f(X,Y')p_{\infty}(Y'; X)dY'=0} \ ,
\end{equation}
which thus leads to the validity of the second equality in \eqref{Lemma:PardouxVeretennnikovAPriorBoundsAuxiliaryPDE:Eq:TransitionDensityf}.

From \eqref{Lemma:PardouxVeretennnikovAPriorBoundsAuxiliaryPDE:Eq:RepSolPossion:u}, and combining
\eqref{Lemma:PardouxVeretennnikovAPriorBoundsAuxiliaryPDE:Eq:TransitionDensityf}, we can write
\begin{equation}\label{Lemma:PardouxVeretennnikovAPriorBoundsAuxiliaryPDE:Eq:RepSolPossion:uAlternate}
u(X,Y)=\int_0^1 dt p_t(Y,f;X)+\int_1^\infty dt p_t(Y,f;X) \ .
\end{equation}

Let us define $p_t^{(j)}(Y,Y';X)=\nabla_X^j p_t(Y,Y'; X)$ to be the $j$--derivative of $p_t(Y,Y';\break X)$ with respect to $X$, and it is
a tensor with $j$ indices. In a similar fashion, from \eqref{Lemma:PardouxVeretennnikovAPriorBoundsAuxiliaryPDE:Eq:TransitionDensityf},
we define $p_t^{(j)}(Y, f; X)=\nabla_X^j \left(\displaystyle{\int_{\mathbb{R}^m} f(X, Y')p_t(Y, Y'; X)dY'}\right)$.
Thus $p_t^{(1)}(Y, f; X)$ is an $n$--dimensional vector, and $p_t^{(2)}(Y, f; X)$ is an $n\times n$ matrix.
Thus we have
\begin{equation}\label{Lemma:PardouxVeretennnikovAPriorBoundsAuxiliaryPDE:Eq:RepSolPossion:uX}
\nabla_X u(X,Y)=\int_0^1 dt p_t^{(1)}(Y,f;X)+\int_1^\infty dt p_t^{(1)}(Y,f;X) \ ,
\end{equation}
\begin{equation}\label{Lemma:PardouxVeretennnikovAPriorBoundsAuxiliaryPDE:Eq:RepSolPossion:uXX}
\nabla_X^2 u(X,Y)=\int_0^1 dt p_t^{(2)}(Y,f;X)+\int_1^\infty dt p_t^{(2)}(Y,f;X) \ ,
\end{equation}
\begin{equation}\label{Lemma:PardouxVeretennnikovAPriorBoundsAuxiliaryPDE:Eq:RepSolPossion:uY}
\nabla_Y u(X,Y)=\int_0^1 dt \nabla_Y p_t(Y,f;X)+\int_1^\infty dt \nabla_Y p_t(Y,f;X) \ .
\end{equation}

By the estimates (14) and (15) from Theorem 2 in \cite{[Pardoux-Veretennikov2]}, taking into account the
centering condition \eqref{Lemma:PardouxVeretennnikovAPriorBoundsAuxiliaryPDE:Eq:CenteringConditionf}, we see that
for any $k>0$ there exist $C,p>0$ such that for all $t\geq 1$ we have
\begin{equation}\label{Lemma:PardouxVeretennnikovAPriorBoundsAuxiliaryPDE:Eq:SecondIntegral1}
|p_t^{(1)}(Y,f;X)|_{\mathbb{R}^n}\leq C\dfrac{1+|Y|_{\mathbb{R}^m}^p}{(1+t)^k} \ ;
\end{equation}
\begin{equation}\label{Lemma:PardouxVeretennnikovAPriorBoundsAuxiliaryPDE:Eq:SecondIntegral2}
|p_t^{(2)}(Y,f;X)|_{\mathbb{R}^n\otimes \mathbb{R}^n}\leq C\dfrac{1+|Y|_{\mathbb{R}^m}^p}{(1+t)^k} \ ;
\end{equation}
and
\begin{equation}\label{Lemma:PardouxVeretennnikovAPriorBoundsAuxiliaryPDE:Eq:SecondIntegral3}
|\nabla_Y p_t(Y,f;X)|_{\mathbb{R}^m}\leq C\dfrac{1+|Y|_{\mathbb{R}^m}^p}{(1+t)^k} \ .
\end{equation}

Now we consider standard estimates, including derivatives with respect to $X$, for the integral $\displaystyle{\int_t^1 p_s(Y,f;X)ds}$
which solves a Cauchy problem for a parabolic equation in the region $[0,1]\times \mathbb{R}^m$ (with an initial value at $t=1$). In fact,
let
$$v(t,Y;X):=\displaystyle{\int_t^1 p_s(Y,f;X)ds}=\displaystyle{\int_t^1 ds\int_{\mathbb{R}^m} f(X,Y')p_s(Y,Y';X)dY'} \ ,$$
then by using Duhamel's principle we see that $v(t,Y;X)=\widetilde{v}(1-t, Y; X)$, where
$\widetilde{v}(s,Y; X)$ is the solution of the Cauchy problem
$$\dfrac{\partial \widetilde{v}}{\partial s}-\mathcal{L}^X \widetilde{v}=f(X,Y) \ , \ \widetilde{v}(0,Y;X)=0 \  .$$
We can apply standard parabolic estimates to the solution $\widetilde{v}$ so that by taking into account the compactly supportedness of
$f(X,Y)$ in terms of $X$ and $Y$, we get
\begin{equation}\label{Lemma:PardouxVeretennnikovAPriorBoundsAuxiliaryPDE:Eq:FirstIntegral}
|\nabla_X \widetilde{v}(1,Y;X)|_{\mathbb{R}^n}+|\nabla^2_X \widetilde{v}(1,Y;X)|_{\mathbb{R}^n\otimes \mathbb{R}^n}+
|\nabla_Y \widetilde{v}(1,Y;X)|_{\mathbb{R}^m}\leq C<\infty \ .
\end{equation}

By applying \eqref{Lemma:PardouxVeretennnikovAPriorBoundsAuxiliaryPDE:Eq:SecondIntegral1}, \eqref{Lemma:PardouxVeretennnikovAPriorBoundsAuxiliaryPDE:Eq:SecondIntegral2},
\eqref{Lemma:PardouxVeretennnikovAPriorBoundsAuxiliaryPDE:Eq:SecondIntegral3} to the second integrals in
\eqref{Lemma:PardouxVeretennnikovAPriorBoundsAuxiliaryPDE:Eq:RepSolPossion:uX},
\eqref{Lemma:PardouxVeretennnikovAPriorBoundsAuxiliaryPDE:Eq:RepSolPossion:uXX},
\eqref{Lemma:PardouxVeretennnikovAPriorBoundsAuxiliaryPDE:Eq:RepSolPossion:uY}, and
\eqref{Lemma:PardouxVeretennnikovAPriorBoundsAuxiliaryPDE:Eq:FirstIntegral} to the first integrals in
\eqref{Lemma:PardouxVeretennnikovAPriorBoundsAuxiliaryPDE:Eq:RepSolPossion:uX},
\eqref{Lemma:PardouxVeretennnikovAPriorBoundsAuxiliaryPDE:Eq:RepSolPossion:uXX},
\eqref{Lemma:PardouxVeretennnikovAPriorBoundsAuxiliaryPDE:Eq:RepSolPossion:uY},
we conclude
\eqref{Lemma:PardouxVeretennnikovAPriorBoundsAuxiliaryPDE:Eq:Bounds}.
\end{proof}

The next Lemma is a reproduction of Proposition 4.2 estimate (4.5) in \cite{[CerraiKhasminskii2009]}.

\begin{lemma}\label{Lemma:MomentBoundOUCerraiKhasminskii}
For any $p\geq 1$ and any $T>0$, there exist constant $C=C(p,T)>0$, such that for all
$x_0\in \mathbb{R}^n , y_0\in \mathbb{R}^m$ we have
\begin{equation}\label{Lemma:MomentBoundOUCerraiKhasminskii:Eq:BoundsY}
\int_0^T \mathbf{E}|Y^{\varepsilon,\eta}_t|_{\mathbb{R}^m}^p dt\leq C(1+|y_0|_{\mathbb{R}^m}^p) \ .
\end{equation}
\end{lemma}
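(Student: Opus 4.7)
The plan is to mirror the strategy that proved the $L^2$--bound in Lemma \ref{Lemma:L2BoundedNessSlowFastProcessTimeChanged}, upgrading from $p=2$ to arbitrary $p\geq 1$ via the Burkholder--Davis--Gundy (BDG) inequality. The starting point is the mild formula \eqref{Eq:MildFormYprocess}, which decomposes $Y^{\varepsilon,\eta}_t$ into three contributions: a semigroup term $e^{-\frac{\varepsilon}{\eta}t}y_0$, a drift integral involving $B_1(X^{\varepsilon,\eta}_s)$, and a stochastic convolution against $\Sigma_1(X^{\varepsilon,\eta}_s)dW^1_s$. I would bound the $p$--th moment of each piece separately and then use the elementary inequality $|a+b+c|^p\leq 3^{p-1}(|a|^p+|b|^p+|c|^p)$.

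For the deterministic initial piece, $|e^{-\frac{\varepsilon}{\eta}t}y_0|^p\leq |y_0|^p$, so its time integral on $[0,T]$ is dominated by $T|y_0|^p$. For the drift term, because $B_1$ is bounded (by the standing compact--support assumption on $g$), a straightforward Jensen/H\"older estimate using that $\int_0^t \frac{\varepsilon}{\eta}e^{-\frac{\varepsilon}{\eta}(t-s)}\,ds\leq 1$ gives a bound uniform in $\varepsilon,\eta, t$. For the stochastic convolution, I would apply the matrix--valued BDG inequality together with the It\^o isometry analog \eqref{Eq:KiferMatrixStochasticIntegralItoIsometry}:
\begin{equation*}
\mathbf{E}\left|\dfrac{\varepsilon}{\sqrt{\eta}}\int_0^t e^{-\frac{\varepsilon}{\eta}(t-s)}\Sigma_1(X_s^{\varepsilon,\eta})dW_s^1\right|_{\mathbb{R}^m}^p
\leq C_p \mathbf{E}\left(\dfrac{\varepsilon^2}{\eta}\int_0^t e^{-\frac{2\varepsilon}{\eta}(t-s)}\|\Sigma_1(X^{\varepsilon,\eta}_s)\|^2\,ds\right)^{p/2}.
\end{equation*}
Using $\|\Sigma_1\|\leq C$ and $\frac{\varepsilon^2}{\eta}\int_0^t e^{-\frac{2\varepsilon}{\eta}(t-s)}ds\leq \frac{\varepsilon}{2}$, this is at most $C_p\,\varepsilon^{p/2}$, hence uniformly bounded for $\varepsilon$ in a bounded range.

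Combining these three estimates yields $\mathbf{E}|Y^{\varepsilon,\eta}_t|^p_{\mathbb{R}^m}\leq C(1+|y_0|^p_{\mathbb{R}^m})$ pointwise in $t$, and integrating over $[0,T]$ produces the claim with constant $C=C(p,T)$ depending only on $p,T$ (and the uniform bounds on $B_1,\Sigma_1$). I expect no significant obstacle since the only mechanism that could hurt is a blow--up in the ratio $\varepsilon/\eta$, but the semigroup decay $e^{-\frac{\varepsilon}{\eta}(t-s)}$ inside every term exactly cancels the prefactors $\varepsilon/\eta$ or $\varepsilon/\sqrt{\eta}$. The main technical point to be careful about is applying BDG in the vector/matrix setting and invoking \eqref{Eq:KiferMatrixStochasticIntegralItoIsometry} (or its $L^p$ analog) correctly; once that is set up the estimate is a short computation, and this is essentially the argument given in \cite{[CerraiKhasminskii2009]} for Proposition 4.2.
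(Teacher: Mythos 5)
Your proposal is correct and follows essentially the same route as the paper's proof: start from the mild formula \eqref{Eq:MildFormYprocess}, bound the deterministic contribution uniformly, and apply the Burkholder--Davis--Gundy inequality to the stochastic convolution, where the exponential kernel cancels the $\varepsilon/\sqrt{\eta}$ prefactor and yields the $C\varepsilon^{p/2}$ bound. The only cosmetic difference is that the paper groups the semigroup and drift terms into $\Lambda(t)=Y^{\varepsilon,\eta}_t-\Gamma(t)$ and bounds $|\Lambda(t)|^p$ via a differential inequality plus Gronwall, whereas you estimate those two terms directly from the kernel bound $\frac{\varepsilon}{\eta}\int_0^t e^{-\frac{\varepsilon}{\eta}(t-s)}ds\leq 1$; both give the same uniform-in-$t$ estimate, which integrates to \eqref{Lemma:MomentBoundOUCerraiKhasminskii:Eq:BoundsY}.
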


\begin{proof}
First of all, we can write the equation
\eqref{Eq:DiffusionLimitTimeChangedStandardForm} for $X^{\varepsilon,\eta}_t$ in an integral form as
$$
X^{\varepsilon,\eta}_t=x_0+\int_0^t B_2(X^{\varepsilon,\eta}_s, Y^{\varepsilon,\eta}_s)ds+\sqrt{\eta}\int_0^t
\Sigma_2(X^{\varepsilon,\eta}_s, Y^{\varepsilon,\eta}_s)dW_s^2 \ .
$$

Therefore we know that
$$\begin{array}{ll}
\mathbf{E}|X^{\varepsilon,\eta}_t|_{\mathbb{R}^n}^p
\leq &\displaystyle{
C(p,t)\left(
  |x_0|_{\mathbb{R}^n}^p
  +
  \mathbf{E}\left|\int_0^t B_2(X^{\varepsilon,\eta}_s, Y^{\varepsilon,\eta}_s)ds\right|_{\mathbb{R}^n}^p
\right.
}
\\
&\quad
\displaystyle{ \left.
+\eta^{p/2}\mathbf{E}\left|\int_0^t
\Sigma_2(X^{\varepsilon,\eta}_s, Y^{\varepsilon,\eta}_s)dW_s^2\right|_{\mathbb{R}^n}^p
\right)
\ .}
\\
\end{array}$$

By making use of the identity \eqref{Eq:KiferMatrixStochasticIntegralItoIsometry}, as well as
the Burkholder--Davis--Gundy inequality (see \cite[Corollary IV.4.2]{[RY99]}), we have
\begin{equation}\label{Eq:BDGInequalityControlStochasticIntegralTerm}
\mathbf{E}\left|\int_0^t \Sigma_2(X_s^{\varepsilon,\eta}, Y_s^{\varepsilon,\eta})dW_s^2\right|_{\mathbb{R}^n}^p\leq
C(p,t)\left(\int_0^t \mathbf{E}\|\Sigma_2(X_s^{\varepsilon,\eta}, Y_s^{\varepsilon,\eta})\|_{\mathbb{R}^n\otimes \mathbb{R}^n}^2ds\right)^{p/2} \ .
\end{equation}

From the mild form \eqref{Eq:MildFormYprocess} of the process $Y_t^{\varepsilon,\eta}$ we have
\begin{equation*}
Y_t^{\varepsilon,\eta}=e^{-\frac{\varepsilon}{\eta}t}y_0+\dfrac{\varepsilon}{\eta}\int_0^t e^{-\frac{\varepsilon}{\eta}(t-s)} B_1(X_s^{\varepsilon,\eta})ds
+\dfrac{\varepsilon}{\sqrt{\eta}}\int_0^t e^{-\frac{\varepsilon}{\eta}(t-s)}\Sigma_1(X_s^{\varepsilon,\eta})dW_s^1 \ .
\end{equation*}

Set $\Gamma(t)=\displaystyle{\dfrac{\varepsilon}{\sqrt{\eta}}\int_0^t e^{-\frac{\varepsilon}{\eta}(t-s)}\Sigma_1(X_s^{\varepsilon,\eta})dW_s^1}$ and $\Lambda(t)=Y^{\varepsilon,\eta}_t-\Gamma(t)$.
Then we have
$$d\Lambda(t)=-\dfrac{\varepsilon}{\eta}[\Lambda(t)+B_1(X^{\varepsilon,\eta}_t)]dt \ , \ \Lambda(0)=y_0 \ ,$$
which gives
$$\begin{array}{ll}
\displaystyle{\dfrac{1}{p}\dfrac{d}{dt}|\Lambda(t)|_{\mathbb{R}^n}^p } & =\displaystyle{\left\langle \Lambda(t), -\dfrac{\varepsilon}{\eta}[\Lambda(t)+B_1(X^{\varepsilon,\eta}_t)]\right\rangle_{\mathbb{R}^n}|\Lambda(t)|_{\mathbb{R}^n}^{p-2} }
\\
& \displaystyle{\leq -\dfrac{\varepsilon}{\eta}|\Lambda(t)|_{\mathbb{R}^n}^p-\dfrac{\varepsilon}{\eta}\langle |\Lambda(t)|_{\mathbb{R}^n}^{p-2} \Lambda(t), B_1(X^{\varepsilon,\eta}_t)\rangle_{\mathbb{R}^n}}
\\
& \displaystyle{\leq -\dfrac{\varepsilon}{\eta}|\Lambda(t)|_{\mathbb{R}^n}^p+\dfrac{\varepsilon}{\eta}\left(\dfrac{p-1}{p}|\Lambda(t)|_{\mathbb{R}^n}^p+\dfrac{1}{p}|B_1(X^{\varepsilon,\eta}_t)|_{\mathbb{R}^n}^p\right)}
\\
& \displaystyle{= -\dfrac{\varepsilon}{p\eta}|\Lambda(t)|_{\mathbb{R}^n}^p+\dfrac{C_{p,t}\varepsilon}{\eta} \ .}
\end{array}$$

Therefore by Gronwall inequality we know that for $0\leq t\leq T$ we have
$$|\Lambda(t)|_{\mathbb{R}^n}^p\leq Ce^{-\frac{\varepsilon}{\eta}t}|y_0|_{\mathbb{R}^m}^p+2CT\leq C(1+|y_0|_{\mathbb{R}^m}^p) \ .$$

It remains to estimate $\mathbf{E}|\Gamma(t)|_{\mathbb{R}^m}^p$. Again, by the identity \eqref{Eq:KiferMatrixStochasticIntegralItoIsometry}
as well as the Burkholder--Davis--Gundy inequality, we have
$$
\mathbf{E}|\Gamma(t)|_{\mathbb{R}^m}^p = \dfrac{\varepsilon^p}{\eta^{p/2}} e^{-\frac{p\varepsilon}{\eta}t}
\displaystyle{\left(\int_0^t e^{\frac{2\varepsilon}{\eta}s}\mathbf{E}\|\Sigma_1(X^{\varepsilon,\eta}_s)\|_{\mathbb{R}^n\otimes \mathbb{R}^n}^2ds\right)^{p/2}}
\leq C \varepsilon^{p/2} \ .$$

Thus we obtain
\begin{equation}\label{Lemma:MomentBoundOUCerraiKhasminskii:Eq:FixTimeBoundsY}
\mathbf{E}|Y^{\varepsilon,\eta}_t|_{\mathbb{R}^m}^p \leq C(\mathbf{E}|\Lambda(t)|_{\mathbb{R}^m}^p+\mathbf{E}|\Gamma(t)|_{\mathbb{R}^m}^p)\leq C(1+|y_0|_{\mathbb{R}^m}^p) \ ,
\end{equation}
which leads to \eqref{Lemma:MomentBoundOUCerraiKhasminskii:Eq:BoundsY} by integrating on $[0,T]$.
\end{proof}

The next Lemma is about how averaging principle is used to evaluate integrals.

\begin{lemma}\label{Lemma:AveragingPrincipleIntegralLimit}
Let $K(X,Y)$ be a Lipschitz continuous function in $X, Y$ such that
$|K(X_1,Y_1)-K(X_2,Y_2)|\leq C(|X_1-X_2|_{\mathbb{R}^n}+|Y_1-Y_2|_{\mathbb{R}^m})$ and
$|K(X,Y)|\leq C(1+|Y|_{\mathbb{R}^m}^p)$ for some $C, p>0$.
For each $\varepsilon>0$ fixed, as $\eta \rightarrow 0$ we have
\begin{equation}\label{Lemma:AveragingPrincipleIntegralLimit:Eq:AveragingPrinciple}
\mathbf{E}\left|\int_0^t K(X^{\varepsilon,\eta}_s, Y^{\varepsilon,\eta}_s)ds-\int_0^t ds\int_{\mathbb{R}^m}K(X^{\varepsilon}_s,Y)\mu^{\varepsilon,X^{\varepsilon}_s}(dY)\right|\rightarrow 0 \ .
\end{equation}
\end{lemma}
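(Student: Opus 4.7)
The plan is to carry out a Khasminskii-style discretization, paralleling the proof of Proposition \ref{Proposition:ConvergenceXEpsEtaToXEps} but with the general observable $K$ in place of $B_2$. Fix $\varepsilon>0$ and choose the mesh size $\Delta=\Delta(\eta)=\eta\sqrt[4]{\ln(\eta^{-1})}$ as in \eqref{Lemma:L2ClosessYandYhat:Eq:ChoiceOfDelta}, so that $\Delta\to 0$ while $\Delta/\eta\to\infty$. Recall that on each subinterval $[k\Delta,(k+1)\Delta]$ the auxiliary process $\widehat{Y}^{\varepsilon,\eta}$ from \eqref{Eq:DiffusionLimitTimeChanged:AuxiliaryProcessYhat} coincides with the frozen-slow OU process $\mathfrak{y}^{X^{\varepsilon,\eta}_{k\Delta},\varepsilon}$ started from $Y^{\varepsilon,\eta}_{k\Delta}$ and time-rescaled by the factor $\varepsilon/\eta$.

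Next, I would split the quantity inside the expectation as
\begin{align*}
& \int_0^t K(X^{\varepsilon,\eta}_s, Y^{\varepsilon,\eta}_s)\,ds - \int_0^t \overline{K(X^{\varepsilon}_s,Y)}^{\varepsilon}(X^{\varepsilon}_s)\,ds \\
&\qquad = J_1(t) + J_2(t) + J_3(t) + J_4(t),
\end{align*}
with
\begin{align*}
J_1(t) &= \int_0^t \bigl[K(X^{\varepsilon,\eta}_s, Y^{\varepsilon,\eta}_s) - K(X^{\varepsilon,\eta}_{[s/\Delta]\Delta}, \widehat{Y}^{\varepsilon,\eta}_s)\bigr]\,ds, \\
J_2(t) &= \sum_{k=0}^{[t/\Delta]}\int_{k\Delta}^{(k+1)\Delta}\bigl[K(X^{\varepsilon,\eta}_{k\Delta}, \widehat{Y}^{\varepsilon,\eta}_s) - \overline{K(X^{\varepsilon,\eta}_{k\Delta},Y)}^{\varepsilon}(X^{\varepsilon,\eta}_{k\Delta})\bigr]\,ds, \\
J_3(t) &= \int_0^t \bigl[\overline{K(X^{\varepsilon,\eta}_{[s/\Delta]\Delta},Y)}^{\varepsilon}(X^{\varepsilon,\eta}_{[s/\Delta]\Delta}) - \overline{K(X^{\varepsilon,\eta}_s,Y)}^{\varepsilon}(X^{\varepsilon,\eta}_s)\bigr]\,ds, \\
J_4(t) &= \int_0^t \bigl[\overline{K(X^{\varepsilon,\eta}_s,Y)}^{\varepsilon}(X^{\varepsilon,\eta}_s) - \overline{K(X^{\varepsilon}_s,Y)}^{\varepsilon}(X^{\varepsilon}_s)\bigr]\,ds.
\end{align*}

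Each piece I would then bound in $L^1$. For $J_1$, use the Lipschitz property of $K$ together with \eqref{Lemma:L2ClosessYandYhat:Eq:ClosenessDiscretizedSlowMotion} and \eqref{Lemma:L2ClosessYandYhat:Eq:L2ClosessYandYhat} (via Cauchy--Schwarz). For $J_2$, identify $\widehat{Y}^{\varepsilon,\eta}$ on each block with the rescaled frozen OU process and apply a polynomial-growth analogue of \eqref{Lemma:BasicFactsFastMotionDeterministicSystemRandomPerturbation:Eq:LLNDriftSlow}, derived by mirroring the argument of Lemma \ref{Lemma:BasicFactsFastMotionDeterministicSystemRandomPerturbation} and using the moment bounds of Lemma \ref{Lemma:MomentBoundOUCerraiKhasminskii}; the per-block $L^2$ error scales like $\sqrt{\eta/(\varepsilon\Delta)}$ so summing over $\lfloor t/\Delta\rfloor$ blocks and multiplying by $\Delta$ still yields a vanishing bound. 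For $J_3$, invoke the Lipschitz continuity of the averaged operator $\overline{K(\cdot,Y)}^{\varepsilon}$ in $X$, a consequence of the explicit Gaussian form \eqref{Eq:InvariantMeasureOUSmallDiffusion} together with the Lipschitz assumptions on $B_1$, $\Sigma_1$ and $K$, combined with \eqref{Lemma:L2ClosessYandYhat:Eq:ClosenessDiscretizedSlowMotion}. For $J_4$, use the same Lipschitz property of the averaging operator together with Proposition \ref{Proposition:ConvergenceXEpsEtaToXEps} to control $\sup_{s\le t}\mathbf{E}|X^{\varepsilon,\eta}_s - X^{\varepsilon}_s|$.

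The main obstacle is accommodating the polynomial growth of $K$ in $Y$: unlike $B_2$, which inherits boundedness from the compact-support assumption on $f$ and $g$, here $K$ may blow up in $Y$, so \eqref{Lemma:BasicFactsFastMotionDeterministicSystemRandomPerturbation:Eq:LLNDriftSlow} cannot be invoked as a black box. To bypass this I would redo the LLN-type estimate for the frozen OU process allowing integrands of polynomial growth, leveraging the explicit representation \eqref{Lemma:BasicFactsFastMotionDeterministicSystemRandomPerturbation:Eq:FastMotionDeterministicSystemRandomPerturbationExplicitSolutionOU} and the higher-moment bound of Lemma \ref{Lemma:MomentBoundOUCerraiKhasminskii} to absorb the initial-condition terms $\mathbf{E}|Y^{\varepsilon,\eta}_{k\Delta}|^{2p}$ appearing through the Lipschitz constant of $K$. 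Once this quantitative rate is in place, combining the four bounds and letting $\eta\to 0$ with $\varepsilon$ fixed yields \eqref{Lemma:AveragingPrincipleIntegralLimit:Eq:AveragingPrinciple}.
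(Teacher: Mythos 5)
Your proposal is correct and follows essentially the same route as the paper's proof: a Khasminskii discretization with the mesh $\Delta=\eta\sqrt[4]{\ln(\eta^{-1})}$, the auxiliary process $\widehat{Y}^{\varepsilon,\eta}$, the per-block mixing estimate for the frozen OU process, the Lipschitz continuity of the averaging operator (Lemma \ref{Lemma:RegularityBarEpsilonOperator}), and Proposition \ref{Proposition:ConvergenceXEpsEtaToXEps} to pass from $X^{\varepsilon,\eta}$ to $X^{\varepsilon}$. The only differences are cosmetic (a four-term rather than three-term splitting) and that you explicitly flag and patch the polynomial-growth issue for $K$ via Lemma \ref{Lemma:MomentBoundOUCerraiKhasminskii}, which the paper handles only implicitly.
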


\begin{proof}
By the estimates \eqref{Lemma:L2ClosessYandYhat:Eq:L2ClosessYandYhat} and \eqref{Proposition:ConvergenceXEpsEtaToXEps:Eq:ClosenessXEpsEtaToXEps}
 and making use of the fact that
$K(X,Y)$ is Lipschitz continuous in $X$ and $Y$, we have
$$
\begin{array}{ll}
&\displaystyle{\mathbf{E}\left|\int_0^t K(X^{\varepsilon,\eta}_s, Y^{\varepsilon,\eta}_s)ds-
\int_0^t K(X^{\varepsilon}_s, \widehat{Y}^{\varepsilon,\eta}_s)ds\right|}
\\
\leq & \displaystyle{C\mathbf{E}\int_0^t \left(|X^{\varepsilon,\eta}_s-X^{\varepsilon}_s|_{\mathbb{R}^n}+|Y^{\varepsilon,\eta}_s-\widehat{Y}^{\varepsilon,\eta}_s|_{\mathbb{R}^m}\right)ds}
\\
\leq & \displaystyle{C\int_0^t \left[\left(\mathbf{E}|X^{\varepsilon,\eta}_s-X^{\varepsilon}_s|^2_{\mathbb{R}^n}\right)^{1/2}+\left(\mathbf{E}|Y^{\varepsilon,\eta}_s-\widehat{Y}^{\varepsilon,\eta}_s|^2_{\mathbb{R}^m}\right)^{1/2}\right]ds \ ,}
\end{array}$$
so that
\begin{equation}\label{Lemma:AveragingPrincipleIntegralLimit:Eq:Estimate1}
\mathbf{E}\left|\int_0^t K(X^{\varepsilon,\eta}_s, Y^{\varepsilon,\eta}_s)ds-
\int_0^t K(X^{\varepsilon}_s, \widehat{Y}^{\varepsilon,\eta}_s)ds\right|\rightarrow 0
\end{equation}
as $\eta \rightarrow 0$. Here the process $\widehat{Y}^{\varepsilon,\eta}_t$ is defined as in \eqref{Eq:DiffusionLimitTimeChanged:AuxiliaryProcessYhat}.
Reasoning as in \eqref{Proposition:ConvergenceXEpsEtaToXEps:Eq:ConvergenceMixingPartFastMotion}, we have, that for each $k=0,1,...$
and each interval $[k\Delta, (k+1)\Delta]$, $\Delta>0$,
$$
\begin{array}{ll}
& \displaystyle{\mathbf{E}\left|
\int_{k\Delta}^{(k+1)\Delta} K(X^{\varepsilon}_s, \widehat{Y}^{\varepsilon,\eta}_s)ds
-
\int_{k\Delta}^{(k+1)\Delta} ds \int_{\mathbb{R}^m}K(X^{\varepsilon}_s,Y)\mu^{\varepsilon,X^{\varepsilon,\eta}_{k\Delta}}(dY)
\right|^2}
\\
\leq & \displaystyle{C\Delta\eta\left(\dfrac{1}{\varepsilon}+1+|x_0|_{\mathbb{R}^n}^2+|y_0|_{\mathbb{R}^m}^2\right)} \ ,
\end{array}
$$
so that if we divide the interval $[0,t]$ into intervals each of size $\Delta=\eta\sqrt[4]{\ln(\eta^{-1})}$,
with the same argument as in Proposition \ref{Proposition:ConvergenceXEpsEtaToXEps} we derive
\begin{equation}\label{Lemma:AveragingPrincipleIntegralLimit:Eq:Estimate2}
\mathbf{E}\left|
\int_0^t K(X^{\varepsilon}_s, \widehat{Y}^{\varepsilon,\eta}_s)ds
-
\int_0^t ds \int_{\mathbb{R}^m}K(X^{\varepsilon}_s,Y)\mu^{\varepsilon,X^{\varepsilon,\eta}_{k\Delta}}(dY)
\right|
\rightarrow 0
\end{equation}
as $\eta \rightarrow 0$. Reasoning as in Lemma \ref{Lemma:RegularityBarEpsilonOperator}, and making use of the fact that
$|K(X,Y)|\leq C(1+|Y|_{\mathbb{R}^m}^p)$ for some $C, p>0$, we have
$$
\begin{array}{ll}
\hspace*{-5pt}& \displaystyle{\mathbf{E}\left|
\int_{k\Delta}^{(k+1)\Delta} ds \int_{\mathbb{R}^m}K(X^{\varepsilon}_s,Y)\mu^{\varepsilon,X^{\varepsilon,\eta}_{k\Delta}}(dY)
-
\int_{k\Delta}^{(k+1)\Delta} ds \int_{\mathbb{R}^m}K(X^{\varepsilon}_s,Y)\mu^{\varepsilon,X^{\varepsilon}_{s}}(dY)
\right|}
\\
\hspace*{-5pt}& \le \displaystyle{\int_{k\Delta}^{(k+1)\Delta} ds
\mathbf{E} \left|\int_{\mathbb{R}^m}K(X^{\varepsilon}_s,Y)\mu^{\varepsilon,X^{\varepsilon,\eta}_{k\Delta}}(dY)-\int_{\mathbb{R}^m}K(X^{\varepsilon}_s,Y)\mu^{\varepsilon,X^{\varepsilon}_{s}}(dY)\right|}
\\
\hspace*{-5pt}& \le C \Delta \displaystyle{\left(\max\limits_{k\Delta\leq s\leq (k+1)\Delta}\mathbf{E}|X^{\varepsilon,\eta}_{k\Delta}-X^{\varepsilon}_s|_{\mathbb{R}^n}\right)}
\\
\hspace*{-5pt}& \le C \Delta \displaystyle{\max\limits_{k\Delta\leq s\leq (k+1)\Delta}\left(\mathbf{E}|X^{\varepsilon,\eta}_{k\Delta}-X^{\varepsilon, \eta}_s|^2_{\mathbb{R}^n}
+\mathbf{E}|X^{\varepsilon, \eta}_s-X^\varepsilon_s|^2_{\mathbb{R}^n}\right)^{1/2}} \ .
\end{array}
$$
From here, using \eqref{Lemma:L2ClosessYandYhat:Eq:ClosenessDiscretizedSlowMotion} and
\eqref{Proposition:ConvergenceXEpsEtaToXEps:Eq:ClosenessXEpsEtaToXEps}, and summing over all intervals of the form $[k\Delta, (k+1)\Delta]$
for $k=0,1,...,N-1$
we know that
\begin{equation}\label{Lemma:AveragingPrincipleIntegralLimit:Eq:Estimate3}
\mathbf{E}\left|
\int_{0}^{t} ds \int_{\mathbb{R}^m}K(X^{\varepsilon}_s,Y)\mu^{\varepsilon,X^{\varepsilon,\eta}_{k\Delta}}(dY)
-
\int_{0}^{t} ds \int_{\mathbb{R}^m}K(X^{\varepsilon}_s,Y)\mu^{\varepsilon,X^{\varepsilon}_{s}}(dY)
\right|\rightarrow 0
\end{equation}
as $\eta \rightarrow 0$. Finally \eqref{Lemma:AveragingPrincipleIntegralLimit:Eq:Estimate1}, \eqref{Lemma:AveragingPrincipleIntegralLimit:Eq:Estimate2}
and \eqref{Lemma:AveragingPrincipleIntegralLimit:Eq:Estimate3} conclude \eqref{Lemma:AveragingPrincipleIntegralLimit:Eq:AveragingPrinciple}.
\end{proof}

Set
\begin{equation}\label{Eq:U1Process}
U_1^{\varepsilon,\eta}(t)=\dfrac{1}{\sqrt{\eta}}\int_0^t [B_2(X^{\varepsilon,\eta}_s, Y^{\varepsilon,\eta}_s)
-\overline{B_2(X^{\varepsilon,\eta}_s,Y)}^{\varepsilon}(X^{\varepsilon,\eta}_s)]ds \ ,
\end{equation}
\begin{equation}\label{Eq:U2Process}
U_2^{\varepsilon,\eta}(t)=\dfrac{1}{\sqrt{\eta}}\int_0^t [\overline{B_2(X^{\varepsilon,\eta}_s,Y)}^{\varepsilon}(X^{\varepsilon,\eta}_s)
-\overline{B_2(X^{\varepsilon}_s,Y)}^{\varepsilon}(X^{\varepsilon}_s)]ds \ .
\end{equation}

The next Lemma characterizes the weak convergence of $U_1^{\varepsilon,\eta}(t)$ as $\eta \rightarrow 0$ to a mean zero Gaussian process.

\begin{lemma}\label{Lemma:WeakConvergenceU1ToGaussian}
For each $\varepsilon>0$, as $\eta \rightarrow 0$, for $0\leq t \leq T$, the family of processes $U_1^{\varepsilon,\eta}(t)$ converges weakly
to a Gaussian process $N_1^\varepsilon(t)$ with mean $0$ and covariance matrix $A^\varepsilon(t)=(a_{i,j}^{\varepsilon}(t))_{1\leq i,j\leq n}$, so that
\begin{equation}\label{Lemma:WeakConvergenceU1ToGaussian:Eq:CovarianceMatrix}
a_{i,j}^{\varepsilon}(t)=\displaystyle{\int_0^t  \overline{\nabla_Y^T u_i(X^{\varepsilon}_s, Y)}^\varepsilon(X^\varepsilon_s)
\Sigma_1(X^{\varepsilon}_s)\Sigma_1^T(X^{\varepsilon}_s)
 \overline{\nabla_Y u_j(X^{\varepsilon}_s, Y)}^\varepsilon(X^\varepsilon_s)ds} \ .
\end{equation}
\end{lemma}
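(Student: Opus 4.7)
The plan is to apply the corrector method through the Poisson equation \eqref{Eq:AuxiliaryPDE} supplied by Lemma \ref{Lemma:PardouxVeretennnikovAPriorBoundsAuxiliaryPDE}. The key observation is that the infinitesimal generator of the joint process \eqref{Eq:DiffusionLimitTimeChangedStandardForm} decomposes, when acting on a smooth $\phi(X,Y)$, as
$$\mathcal{G}^{\varepsilon,\eta}\phi=\dfrac{\varepsilon}{\eta}\mathcal{L}^{X,\varepsilon}\phi+B_2(X,Y)\cdot\nabla_X\phi+\dfrac{\eta}{2}\mathrm{tr}\bigl(A_2(X,Y)\nabla_X^2\phi\bigr),$$
so that applying It\^o's formula to $u_k(X^{\varepsilon,\eta}_s,Y^{\varepsilon,\eta}_s)$, substituting $\mathcal{L}^{X,\varepsilon}u_k=[B_2]_k-[\overline{B_2}^\varepsilon]_k$ from \eqref{Eq:AuxiliaryPDE}, multiplying by $\eta/\varepsilon$, and dividing by $\sqrt{\eta}$ yields the explicit decomposition
\begin{align*}
[U_1^{\varepsilon,\eta}]_k(t)&=\dfrac{\sqrt{\eta}}{\varepsilon}\bigl[u_k(X^{\varepsilon,\eta}_t,Y^{\varepsilon,\eta}_t)-u_k(x_0,y_0)\bigr]-\dfrac{\sqrt{\eta}}{\varepsilon}\int_0^t\nabla_X u_k\cdot B_2\,ds\\
&\quad-\dfrac{\eta^{3/2}}{2\varepsilon}\int_0^t\mathrm{tr}(A_2\nabla_X^2 u_k)\,ds-\dfrac{\eta}{\varepsilon}\int_0^t(\nabla_X u_k)^T\Sigma_2\,dW^2_s\\
&\quad-\int_0^t\bigl(\nabla_Y u_k(X^{\varepsilon,\eta}_s,Y^{\varepsilon,\eta}_s)\bigr)^T\Sigma_1(X^{\varepsilon,\eta}_s)\,dW^1_s.
\end{align*}

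The next step is to show that the first four terms above are negligible as $\eta\to 0$ with $\varepsilon>0$ fixed. Each of them carries a strictly positive power of $\eta$ as a prefactor. Combining the polynomial-in-$Y$ bounds $|u_k|+|\nabla_X u_k|+|\nabla_X^2 u_k|+|\nabla_Y u_k|\leq C(1+|Y|_{\mathbb{R}^m}^p)$ from Lemma \ref{Lemma:PardouxVeretennnikovAPriorBoundsAuxiliaryPDE} with the moment estimate of Lemma \ref{Lemma:MomentBoundOUCerraiKhasminskii}, the boundedness of $B_2$ and $A_2$, the It\^o isometry \eqref{Eq:KiferMatrixStochasticIntegralItoIsometry}, and the Burkholder--Davis--Gundy inequality on the $W^2$-stochastic integral, provides an $L^2(\Omega)$ bound of the form $C(\varepsilon,T)\eta^\alpha$ with $\alpha>0$ on each of the four remainders, uniformly in $t\in[0,T]$.

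The surviving leading term is the continuous martingale
$$M^{\varepsilon,\eta}_k(t):=-\int_0^t(\nabla_Y u_k(X^{\varepsilon,\eta}_s,Y^{\varepsilon,\eta}_s))^T\Sigma_1(X^{\varepsilon,\eta}_s)\,dW^1_s,$$
whose predictable quadratic covariation equals $\int_0^t(\nabla_Y u_i)^T A_1(X^{\varepsilon,\eta}_s)(\nabla_Y u_j)\,ds$ evaluated along the trajectory. Invoking Lemma \ref{Lemma:AveragingPrincipleIntegralLimit} with the Lipschitz, polynomial-growth test function $K(X,Y):=(\nabla_Y u_i(X,Y))^T A_1(X)(\nabla_Y u_j(X,Y))$—whose regularity and growth are supplied by Lemma \ref{Lemma:PardouxVeretennnikovAPriorBoundsAuxiliaryPDE}—gives in-probability convergence of $\langle M^{\varepsilon,\eta}_i,M^{\varepsilon,\eta}_j\rangle_t$ to the claimed deterministic covariance $a^\varepsilon_{ij}(t)$, uniformly on $[0,T]$. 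Because $M^{\varepsilon,\eta}$ is continuous (zero jumps) and its integrand is uniformly $L^2$-bounded (again by Lemmas \ref{Lemma:PardouxVeretennnikovAPriorBoundsAuxiliaryPDE} and \ref{Lemma:MomentBoundOUCerraiKhasminskii}), Rebolledo's functional martingale central limit theorem produces weak convergence in $C([0,T];\mathbb{R}^n)$ of $M^{\varepsilon,\eta}$ to a continuous mean-zero Gaussian martingale $N_1^\varepsilon$ with covariance matrix $A^\varepsilon(t)$, and Slutsky's theorem combined with the negligibility of the four remainders yields $U_1^{\varepsilon,\eta}\Rightarrow N_1^\varepsilon$.

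The main obstacle will be the delicate $L^2$ accounting of the four remainder terms: only polynomial-in-$Y$ bounds are available for $u_k$ and its derivatives, so the strictly positive $\eta$-powers on the prefactors must be carefully matched against the $Y^{\varepsilon,\eta}$-moment estimate to ensure net decay; in particular the factor $1/\varepsilon$ on three of the remainders is what forces the assertion to be restricted to $\varepsilon>0$ fixed rather than letting $\varepsilon$ tend to zero simultaneously. A secondary technical point is verifying the Aldous-type tightness criterion for $M^{\varepsilon,\eta}$, which however is automatic given the uniform $L^2$ bound on the integrand of the stochastic integral.
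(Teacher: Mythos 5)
Your proposal follows essentially the same route as the paper's proof: It\^{o}'s formula applied to the corrector $u_k$ from the Poisson equation \eqref{Eq:AuxiliaryPDE}, the a--priori bounds of Lemma \ref{Lemma:PardouxVeretennnikovAPriorBoundsAuxiliaryPDE} together with the moment estimate of Lemma \ref{Lemma:MomentBoundOUCerraiKhasminskii} to kill the $\eta$--prefactored remainders, and Lemma \ref{Lemma:AveragingPrincipleIntegralLimit} to identify the limiting quadratic variation of the surviving $W^1$--martingale, concluding by a martingale central limit theorem. The only differences are cosmetic (you group the remainders into five terms rather than the paper's $(I)+(II)+(III)$ and name Rebolledo's theorem explicitly), so the argument is correct and matches the paper.
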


\begin{proof}
Let $u_k(X,Y)$ be the solution to \eqref{Eq:AuxiliaryPDE}, $k=1,2,...,n$.
Let us then apply It\^{o}'s formula to $u_k(X^{\varepsilon,\eta}_t, Y^{\varepsilon,\eta}_t)$, and we get
$$\begin{array}{ll}
&u_k(X^{\varepsilon,\eta}_t,Y^{\varepsilon,\eta}_t)-u_k(X^{\varepsilon,\eta}_0,Y^{\varepsilon,\eta}_0) \\
=& \displaystyle{\int_0^t \nabla_X u_k(X^{\varepsilon,\eta}_s, Y^{\varepsilon,\eta}_s)\cdot B_1(X^{\varepsilon,\eta}_s, Y^{\varepsilon,\eta}_s)ds}
\end{array}$$$$\begin{array}{ll}& \qquad \displaystyle{+\dfrac{\eta}{2} \int_0^t \nabla_X\cdot(A_2(X^{\varepsilon,\eta}_s,Y^{\varepsilon,\eta}_s)\nabla_X u_k(X^{\varepsilon,\eta}_s, Y^{\varepsilon,\eta}_s))ds}
\\
& \qquad \displaystyle{+\sqrt{\eta}\int_0^t \nabla_X u_k(X^{\varepsilon,\eta}_s, Y^{\varepsilon,\eta}_s)\cdot \Sigma_2(X^{\varepsilon,\eta}_s, Y^{\varepsilon,\eta}_s)dW_s^2}
\\
& \displaystyle{\qquad +\dfrac{\varepsilon}{\eta}\int_0^t \mathcal{L}^{X^{\varepsilon,\eta}_s}u_k(X^{\varepsilon,\eta}_s, Y^{\varepsilon,\eta}_s)ds}
\\
& \displaystyle{\qquad +\sqrt{\varepsilon}\left(\dfrac{\varepsilon}{\eta}\right)^{1/2}\int_0^t \nabla_Y u_k(X^{\varepsilon,\eta}_s, Y^{\varepsilon,\eta}_s)\cdot \Sigma_1(X^{\varepsilon,\eta}_s)dW_s^1 \ .}
\end{array}$$

From the above identity and the Poisson equation \eqref{Eq:AuxiliaryPDE} we obtain that
\begin{equation}\label{Lemma:WeakConvergenceU1ToGaussian:Eq:EstimatesOfCorrection}
\begin{array}{ll}
& \displaystyle{\int_0^t \left([B_2]_k(X^{\varepsilon,\eta}_s, Y^{\varepsilon,\eta}_s)-[\overline{B_2(X^{\varepsilon,\eta}_s, Y^{\varepsilon,\eta}_s)}^\varepsilon]_k(X^{\varepsilon,\eta}_s)\right)ds}
\\
= & \dfrac{\eta}{\varepsilon}[u_k(X^{\varepsilon,\eta}_t,Y^{\varepsilon,\eta}_t)-u_k(X^{\varepsilon,\eta}_0,Y^{\varepsilon,\eta}_0)]
\\
& \qquad -\displaystyle{\dfrac{\eta}{\varepsilon}\int_0^t \nabla_X u_k(X^{\varepsilon,\eta}_s, Y^{\varepsilon,\eta}_s)\cdot B_1(X^{\varepsilon,\eta}_s, Y^{\varepsilon,\eta}_s)ds}
\\
& \qquad \displaystyle{+\dfrac{\eta}{2}\cdot \dfrac{\eta}{\varepsilon} \int_0^t \nabla_X\cdot(A_2(X^{\varepsilon,\eta}_s,Y^{\varepsilon,\eta}_s)\nabla_X u_k(X^{\varepsilon,\eta}_s, Y^{\varepsilon,\eta}_s))ds}
\\
& \qquad \displaystyle{-\sqrt{\eta}\cdot \dfrac{\eta}{\varepsilon}\int_0^t \nabla_X u_k(X^{\varepsilon,\eta}_s, Y^{\varepsilon,\eta}_s)\cdot \Sigma_2(X^{\varepsilon,\eta}_s, Y^{\varepsilon,\eta}_s)dW_s^2}
\\
& \displaystyle{\qquad +\sqrt{\varepsilon}\left(\dfrac{\eta}{\varepsilon}\right)^{1/2}\int_0^t \nabla_Y u_k(X^{\varepsilon,\eta}_s, Y^{\varepsilon,\eta}_s)\cdot \Sigma_1(X^{\varepsilon,\eta}_s)dW_s^1 \ .}
\end{array}
\end{equation}

Therefore
\begin{equation}\label{Eq:CorrectorEquationSlowMotion}
\begin{array}{lll}
[U_1^{\varepsilon,\eta}]_k(t)&=& \displaystyle{\dfrac{1}{\sqrt{\eta}}
\int_0^t \left([B_2]_k(X^{\varepsilon,\eta}_s, Y^{\varepsilon,\eta}_s)-[\overline{B_2(X^{\varepsilon,\eta}_s, Y^{\varepsilon,\eta}_s)}^\varepsilon]_k(X^{\varepsilon,\eta}_s)\right)ds}
\\
\\
&= & \dfrac{\sqrt{\eta}}{\varepsilon}[u_k(X^{\varepsilon,\eta}_t,Y^{\varepsilon,\eta}_t)-u_k(X^{\varepsilon,\eta}_0,Y^{\varepsilon,\eta}_0)]
\\
&& \qquad -\displaystyle{\dfrac{\sqrt{\eta}}{\varepsilon}\int_0^t \nabla_X u_k(X^{\varepsilon,\eta}_s, Y^{\varepsilon,\eta}_s)\cdot B_1(X^{\varepsilon,\eta}_s, Y^{\varepsilon,\eta}_s)ds}
\\
&& \qquad \displaystyle{+\dfrac{\sqrt{\eta}}{2}\cdot \dfrac{\eta}{\varepsilon} \int_0^t \nabla_X\cdot(A_2(X^{\varepsilon,\eta}_s,Y^{\varepsilon,\eta}_s)\nabla_X u_k(X^{\varepsilon,\eta}_s, Y^{\varepsilon,\eta}_s))ds}
\\
&& \qquad \displaystyle{- \dfrac{\eta}{\varepsilon}\int_0^t \nabla_X u_k(X^{\varepsilon,\eta}_s, Y^{\varepsilon,\eta}_s)\cdot \Sigma_2(X^{\varepsilon,\eta}_s, Y^{\varepsilon,\eta}_s)dW_s^2}
\\
&& \displaystyle{\qquad +\int_0^t \nabla_Y u_k(X^{\varepsilon,\eta}_s, Y^{\varepsilon,\eta}_s)\cdot \Sigma_1(X^{\varepsilon,\eta}_s)dW_s^1}
\\
&=& (I)+(II)+(III) \ .
\end{array}
\end{equation}

Here
$$\begin{array}{ll}
(I)& =\displaystyle{\dfrac{\sqrt{\eta}}{\varepsilon}[u_k(X^{\varepsilon,\eta}_t,Y^{\varepsilon,\eta}_t)-u_k(X^{\varepsilon,\eta}_0,Y^{\varepsilon,\eta}_0)]
}
\\
&
\displaystyle{\qquad
-
\dfrac{\sqrt{\eta}}{\varepsilon}\int_0^t \nabla_X u_k(X^{\varepsilon,\eta}_s, Y^{\varepsilon,\eta}_s)\cdot B_1(X^{\varepsilon,\eta}_s, Y^{\varepsilon,\eta}_s)ds}
\\
& \displaystyle{\qquad
 +\dfrac{\sqrt{\eta}}{2}\cdot \dfrac{\eta}{\varepsilon} \int_0^t
\nabla_X\cdot(A_2(X^{\varepsilon,\eta}_s,Y^{\varepsilon,\eta}_s)\nabla_X u_k(X^{\varepsilon,\eta}_s, Y^{\varepsilon,\eta}_s))ds \ ,}
\\
(II)& =\displaystyle{- \dfrac{\eta}{\varepsilon}\int_0^t \nabla_X u_k(X^{\varepsilon,\eta}_s, Y^{\varepsilon,\eta}_s)\cdot \Sigma_2(X^{\varepsilon,\eta}_s, Y^{\varepsilon,\eta}_s)dW_s^2 \ ,}
\\
(III) & = \displaystyle{\int_0^t \nabla_Y u_k(X^{\varepsilon,\eta}_s, Y^{\varepsilon,\eta}_s)\cdot \Sigma_1(X^{\varepsilon,\eta}_s)dW_s^1 \ .}
\end{array}$$
We conclude from Lemma \ref{Lemma:PardouxVeretennnikovAPriorBoundsAuxiliaryPDE} that
$$\mathbf{E}|(I)|^2 \leq C\dfrac{\eta}{\varepsilon^2}\left(1+|x_0|^2+|y_0|^2+\left(\mathbf{E}\displaystyle{\int_0^t |Y_s^{\varepsilon,\eta}|_{\mathbb{R}^m}^pds}\right)^2\right) \ .$$
Moreover, by combining Lemma \ref{Lemma:PardouxVeretennnikovAPriorBoundsAuxiliaryPDE} as well as \eqref{Eq:KiferMatrixStochasticIntegralItoIsometry}
we also see that
$$\mathbf{E}|(II)|^2\leq C \dfrac{\eta^2}{\varepsilon^2}\left(1+\left(\displaystyle{\mathbf{E} \int_0^t |Y_s^{\varepsilon,\eta}|_{\mathbb{R}^m}^p ds}\right)^2\right) \ .$$

Making use of Lemma \ref{Lemma:MomentBoundOUCerraiKhasminskii} the estimate
\eqref{Lemma:MomentBoundOUCerraiKhasminskii:Eq:BoundsY}, we know that
$$\mathbf{E}(|(I)|^2+|(II)|^2)\rightarrow 0$$
as $\eta \rightarrow 0$.

Now we look at $(III)$. In fact, the term
$$(III) = \mathcal{M}_k^{\varepsilon,\eta}(t):=\displaystyle{\int_0^t \nabla_Y u_k(X^{\varepsilon,\eta}_s, Y^{\varepsilon,\eta}_s)\cdot \Sigma_1(X^{\varepsilon,\eta}_s)dW_s^1}$$
is a martingale with mean $0$ and quadratic variation
$$\begin{array}{ll}
\langle \mathcal{M}_k^{\varepsilon,\eta} , \mathcal{M}_k^{\varepsilon,\eta}\rangle_t
 & =
a_{k,k}^{\varepsilon,\eta}(t)
\\
 &
 \hspace{-.4in}
 =
\displaystyle{\int_0^t \nabla_Y^T u_k(X^{\varepsilon,\eta}_s, Y^{\varepsilon,\eta}_s)
\Sigma_1(X^{\varepsilon,\eta}_s)\Sigma_1^T(X^{\varepsilon,\eta}_s)
\nabla_Y u_k(X^{\varepsilon,\eta}_s, Y^{\varepsilon,\eta}_s)ds} \ .
\end{array}$$

Set
$$a_{k,k}^{\varepsilon}(t)=\displaystyle{\int_0^t  \overline{\nabla_Y^T u_k(X^{\varepsilon}_s, Y)}^\varepsilon(X^\varepsilon_s)
\Sigma_1(X^{\varepsilon}_s)\Sigma_1^T(X^{\varepsilon}_s)
 \overline{\nabla_Y u_k(X^{\varepsilon}_s, Y)}^\varepsilon(X^\varepsilon_s)ds} \ .$$

Making use of Lemma \ref{Lemma:AveragingPrincipleIntegralLimit}, we know that as $\eta\rightarrow 0$,
for any $R>0$ we have the convergence
\begin{equation}\label{Lemma:WeakConvergenceU1ToGaussian:TruncatedQuadraticVariation}
\begin{array}{l}
m^{\varepsilon,\eta}(R)
\\
:=
\mathbf{E}  \displaystyle{\left|\int_0^t \nabla_Y^T u_k(X^{\varepsilon,\eta}_s, Y^{\varepsilon,\eta}_s)
\Sigma_1(X^{\varepsilon,\eta}_s)\Sigma_1^T(X^{\varepsilon,\eta}_s)
\nabla_Y u_k(X^{\varepsilon,\eta}_s, Y^{\varepsilon,\eta}_s)\mathbf{1}_{|Y^{\varepsilon,\eta}_s|_{\mathbb{R}^m}\leq R}ds\right.}
\\
\hspace{-.3in}
\displaystyle{\qquad -\left.\int_0^t \overline{\nabla_Y^T u_k(X^{\varepsilon}_s, Y)\mathbf{1}_{|Y|_{\mathbb{R}^m}\leq R}}^\varepsilon(X^\varepsilon_s)
\Sigma_1(X^{\varepsilon}_s)\Sigma_1^T(X^{\varepsilon}_s)
 \overline{\nabla_Y u_k(X^{\varepsilon}_s, Y)\mathbf{1}_{|Y|_{\mathbb{R}^m}\leq R}}^\varepsilon(X^\varepsilon_s)ds\right|}
\\
\displaystyle{\qquad \qquad \rightarrow 0}
\end{array}
\end{equation}
as $\eta \rightarrow 0$.

Therefore by using \eqref{Lemma:PardouxVeretennnikovAPriorBoundsAuxiliaryPDE:Eq:Bounds} for $\nabla_Y u$ we have the estimate
$$\begin{array}{ll}
\hspace*{-5pt}& \mathbf{E}|a_{k,k}^{\varepsilon,\eta}(t)-a_{k,k}^\varepsilon(t)|
\\
\hspace*{-5pt}\leq & m^{\varepsilon,\eta}(R)
\hspace*{-5pt}\\&
\hspace{-.2in}
+
\mathbf{E}  \displaystyle{\left|\int_0^t \nabla_Y^T u_k(X^{\varepsilon,\eta}_s, Y^{\varepsilon,\eta}_s)
\Sigma_1(X^{\varepsilon,\eta}_s)\Sigma_1^T(X^{\varepsilon,\eta}_s)
\nabla_Y u_k(X^{\varepsilon,\eta}_s, Y^{\varepsilon,\eta}_s)\mathbf{1}_{|Y^{\varepsilon,\eta}_s|_{\mathbb{R}^m}> R}ds\right.}
\\
\hspace*{-5pt}& \displaystyle{\hspace{-.2in}
-\left.\int_0^t \overline{\nabla_Y^T u_k(X^{\varepsilon}_s, Y)\mathbf{1}_{|Y|_{\mathbb{R}^m}> R}}^\varepsilon(X^\varepsilon_s)
\Sigma_1(X^{\varepsilon}_s)\Sigma_1^T(X^{\varepsilon}_s)
 \overline{\nabla_Y u_k(X^{\varepsilon}_s, Y)\mathbf{1}_{|Y|_{\mathbb{R}^m}>R}}^\varepsilon(X^\varepsilon_s)ds\right|}
\\
\hspace*{-5pt}\leq &\hspace{-.1in}
 m^{\varepsilon,\eta}(R) + \displaystyle{C\int_0^t \mathbf{E}(1+|Y_s^{\varepsilon,\eta}|_{\mathbb{R}^m}^p)\mathbf{1}_{|Y^{\varepsilon,\eta}_s|_{\mathbb{R}^m}> R}ds}
\\
\hspace*{-5pt}& \displaystyle{\hspace{-.3in}
 \ +\mathbf{E} \int_0^t \overline{\nabla_Y^T u_k(X^{\varepsilon}_s, Y)\mathbf{1}_{|Y|_{\mathbb{R}^m}> R}}^\varepsilon(X^\varepsilon_s)
\Sigma_1(X^{\varepsilon}_s)\Sigma_1^T(X^{\varepsilon}_s)
 \overline{\nabla_Y u_k(X^{\varepsilon}_s, Y)\mathbf{1}_{|Y|_{\mathbb{R}^m}>R}}^\varepsilon(X^\varepsilon_s)ds}
\\
\hspace*{-5pt}\leq & m^{\varepsilon,\eta}(R)+\rho(R) \ .
\end{array}$$
By Cauchy--Schwarz inequality we can estimate
$\left(\mathbf{E}(1+|Y_s^{\varepsilon,\eta}|_{\mathbb{R}^m}^p)\mathbf{1}_{|Y^{\varepsilon,\eta}_s|_{\mathbb{R}^m}> R}\right)^2\leq
\mathbf{E}(1+|Y_s^{\varepsilon,\eta}|_{\mathbb{R}^m}^p)^2\mathbf{P}(|Y_s^{\varepsilon,\eta}|_{\mathbb{R}^m}>R)\rightarrow 0$ as $R\rightarrow \infty$ due to Chebyshev's inequality and estimate \eqref{Lemma:MomentBoundOUCerraiKhasminskii:Eq:FixTimeBoundsY}. Similarly  $\overline{\nabla_Y^T u_k(X^{\varepsilon}_s, Y)\mathbf{1}_{|Y|_{\mathbb{R}^m}> R}}^\varepsilon(X^\varepsilon_s)\rightarrow 0$
with probability 1 as $R\rightarrow\infty$. Thus $\rho(R)\rightarrow 0$ as $R\rightarrow \infty$.
Letting $\eta \rightarrow 0$ first and then $R\rightarrow \infty$ we see that $\mathbf{E}|a_{k,k}^{\varepsilon,\eta}(t)-a_{k,k}^\varepsilon(t)|\rightarrow 0$ as $\eta \rightarrow 0$. Thus
 $[U_1^{\varepsilon,\eta}]_k(t)$ converges weakly in $\mathbf{C}([0,T]; \mathbb{R}^n)$ to a Gaussian process
$[N_1^{\varepsilon}]_k(t)$ with mean $0$ and variance $a_{k,k}^\varepsilon(t)$.

In a same fashion, we can define for any $i,j=1,2,...,n$, that
$$a_{i,j}^{\varepsilon}(t)=\displaystyle{\int_0^t  \overline{\nabla_Y^T u_i(X^{\varepsilon}_s, Y)}^\varepsilon(X^\varepsilon_s)
\Sigma_1(X^{\varepsilon}_s)\Sigma_1^T(X^{\varepsilon}_s)
 \overline{\nabla_Y u_j(X^{\varepsilon}_s, Y)}^\varepsilon(X^\varepsilon_s)ds} \ .$$
Let $A^\varepsilon(t)=(a_{i,j}^{\varepsilon}(t))_{1\leq i,j\leq n}$. With the same reasoning as above, we can show that $U_1^{\varepsilon,\eta}(t)$
converges weakly to $N_1^\varepsilon(t)$ with covariance matrix $A^\varepsilon(t)$.
\end{proof}

In regards to the remark made after Proposition \ref{Proposition:ConvergenceXEpsEtaToXEps}, we can improve the estimate
\eqref{Proposition:ConvergenceXEpsEtaToXEps:Eq:ClosenessXEpsEtaToXEps} in the following lemma.

\begin{lemma}\label{Lemma:ClosenessXToXhatCorrectorMethod}
For any $T>0$ and $\varepsilon>0,\eta>0$ small enough, for $0\leq t\leq T$ and any small $0<\kappa<1$ we have
\begin{equation}\label{Lemma:ClosenessXToXhatCorrectorMethod:Eq:ClosenessXEpsEtaToXEpsCorrector}
\mathbf{E}|X^{\varepsilon,\eta}_t-X^\varepsilon_t|_{\mathbb{R}^n}^2\leq C\left(\dfrac{\eta^2}{\varepsilon^2}+\eta\right) \ ,
\end{equation}
for some constant $C=C(T)>0$.
\end{lemma}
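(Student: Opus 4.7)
The plan is to revisit the integral identity \eqref{Lemma:ClosenessXToXhat:Eq:DiffusionLimitTimeChanged:SlowProcessX} for $X^{\varepsilon,\eta}_t$ and the defining ODE \eqref{Eq:AveragedSlowMotionGaussianMeasureDeterministic} for $X^\varepsilon_t$, write
\begin{align*}
X^{\varepsilon,\eta}_t-X^\varepsilon_t
&=\int_0^t\bigl[B_2(X^{\varepsilon,\eta}_s,Y^{\varepsilon,\eta}_s)-\overline{B_2(X^{\varepsilon,\eta}_s,Y)}^{\varepsilon}(X^{\varepsilon,\eta}_s)\bigr]ds\\
&\quad+\int_0^t\bigl[\overline{B_2(X^{\varepsilon,\eta}_s,Y)}^{\varepsilon}(X^{\varepsilon,\eta}_s)-\overline{B_2(X^\varepsilon_s,Y)}^{\varepsilon}(X^\varepsilon_s)\bigr]ds\\
&\quad+\sqrt{\eta}\int_0^t\Sigma_2(X^{\varepsilon,\eta}_s,Y^{\varepsilon,\eta}_s)dW_s^2,
\end{align*}
and to bound each of the three terms in $L^2$ separately. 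The second term, by Lipschitz continuity of $\overline{B_2(\cdot,Y)}^\varepsilon(\cdot)$ (Lemma \ref{Lemma:RegularityBarEpsilonOperator}), contributes $Ct\int_0^t\mathbf{E}|X^{\varepsilon,\eta}_s-X^\varepsilon_s|_{\mathbb{R}^n}^2\,ds$; the third, by the isometry \eqref{Eq:KiferMatrixStochasticIntegralItoIsometry} and boundedness of $\Sigma_2$, contributes $C\eta t$.

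For the first term I would apply the corrector decomposition that already appears verbatim in the proof of Lemma \ref{Lemma:WeakConvergenceU1ToGaussian}: Itô's formula applied componentwise to $u_k(X^{\varepsilon,\eta}_t,Y^{\varepsilon,\eta}_t)$, where $u_k$ is the solution of the Poisson equation \eqref{Eq:AuxiliaryPDE}, yields exactly the identity \eqref{Lemma:WeakConvergenceU1ToGaussian:Eq:EstimatesOfCorrection}, i.e.,
\begin{equation*}
\int_0^t\!\bigl([B_2]_k-[\overline{B_2}^\varepsilon]_k\bigr)(X^{\varepsilon,\eta}_s,Y^{\varepsilon,\eta}_s)\,ds
= \tfrac{\eta}{\varepsilon}\Delta u_k+\tfrac{\eta}{\varepsilon}I_k^{(1)}+\tfrac{\eta^2}{\varepsilon}I_k^{(2)}+\tfrac{\eta^{3/2}}{\varepsilon}M_k^{(1)}+\sqrt{\eta}\,M_k^{(2)},
\end{equation*}
where $\Delta u_k$ is the boundary difference of $u_k$ and $I_k^{(1)},I_k^{(2)},M_k^{(1)},M_k^{(2)}$ are (stochastic) integrals involving $\nabla_X u_k$, $\nabla_X\!\cdot\!(A_2\nabla_X u_k)$, and $\nabla_Y u_k$. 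Using the a--priori bounds of Lemma \ref{Lemma:PardouxVeretennnikovAPriorBoundsAuxiliaryPDE}, the BDG/isometry \eqref{Eq:KiferMatrixStochasticIntegralItoIsometry}--\eqref{Eq:BDGInequalityControlStochasticIntegralTerm}, and the polynomial moment bound \eqref{Lemma:MomentBoundOUCerraiKhasminskii:Eq:BoundsY} on $Y^{\varepsilon,\eta}_s$, the four "deterministic--prefactor" pieces contribute $L^2$-squared norm $O(\eta^2/\varepsilon^2)$ (uniformly in $t\in[0,T]$), while the genuinely martingale piece $\sqrt{\eta}M_k^{(2)}=\sqrt{\eta}\int_0^t\nabla_Yu_k\cdot\Sigma_1\,dW_s^1$ contributes $L^2$-squared norm $O(\eta)$.

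Collecting the three estimates, I obtain
\begin{equation*}
\mathbf{E}|X^{\varepsilon,\eta}_t-X^\varepsilon_t|_{\mathbb{R}^n}^2
\le C\!\left(\frac{\eta^2}{\varepsilon^2}+\eta\right)+CT\!\int_0^t\!\mathbf{E}|X^{\varepsilon,\eta}_s-X^\varepsilon_s|_{\mathbb{R}^n}^2\,ds,
\end{equation*}
and Gronwall's inequality yields the claimed bound with $C=C(T)$. The main technical obstacle is the routine but delicate verification that each of the four "$\eta/\varepsilon$-type" terms in the Itô decomposition actually has squared $L^2$-norm of order $\eta^2/\varepsilon^2$ (and not a worse power), which requires combining the polynomial $|Y|^p$-growth in \eqref{Lemma:PardouxVeretennnikovAPriorBoundsAuxiliaryPDE:Eq:Bounds} with the uniform-in-$t$ control on $\mathbf{E}|Y^{\varepsilon,\eta}_s|_{\mathbb{R}^m}^{2p}$; after that the Gronwall closure is immediate, and no $\kappa$-loss appears because we never invoked the partitioning argument underlying Khasminskii's method.
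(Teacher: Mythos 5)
Your proposal is correct and follows essentially the same route as the paper: the identical three-term decomposition of $X^{\varepsilon,\eta}_t-X^\varepsilon_t$, the corrector identity \eqref{Lemma:WeakConvergenceU1ToGaussian:Eq:EstimatesOfCorrection} combined with Lemmas \ref{Lemma:PardouxVeretennnikovAPriorBoundsAuxiliaryPDE} and \ref{Lemma:MomentBoundOUCerraiKhasminskii} to get the $O(\eta^2/\varepsilon^2+\eta)$ bound on the fluctuation term, Lemma \ref{Lemma:RegularityBarEpsilonOperator} for the Lipschitz term, the It\^{o} isometry for the stochastic integral, and Gronwall to close. Your accounting of which pieces of the It\^{o} decomposition contribute $\eta^2/\varepsilon^2$ versus $\eta$ matches the paper's.
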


\begin{proof}
We can write the equation \eqref{Eq:DiffusionLimitTimeChangedStandardForm} for $X_t^{\varepsilon,\eta}$ in integral form as
$$X_t^{\varepsilon,\eta}=x_0+\int_0^t B_2(X_s^{\varepsilon,\eta}, Y_s^{\varepsilon,\eta})ds+\sqrt{\eta}\int_0^t \Sigma_2(X_s^{\varepsilon,\eta}, Y_s^{\varepsilon,\eta})dW_s^2 \ .$$

By \eqref{Eq:AveragedSlowMotionGaussianMeasureDeterministic}, we also have
$$X_t^{\varepsilon}=x_0+\int_0^t \overline{B_2(X^\varepsilon_s, Y)}^\varepsilon(X^\varepsilon_s)ds \ .$$

Therefore we can write
$$\begin{array}{ll}
& X_t^{\varepsilon,\eta}-X_t^\varepsilon
\\
=&
\displaystyle{\int_0^t [B_2(X_s^{\varepsilon,\eta}, Y_s^{\varepsilon,\eta})-\overline{B_2(X_s^{\varepsilon,\eta}, Y)}^\varepsilon(X_s^{\varepsilon,\eta})]ds}
\\
& \ \ \ \ \ \ \ \ \ \ +  \displaystyle{\int_0^t [\overline{B_2(X_s^{\varepsilon,\eta}, Y)}^\varepsilon(X_s^{\varepsilon,\eta})-\overline{B_2(X^{\varepsilon}_s, Y)}^\varepsilon(X^{\varepsilon}_s)]ds}
\\
& \ \ \ \ \ \ \ \ \ \ + \displaystyle{\sqrt{\eta}\int_0^t \Sigma_2(X^{\varepsilon,\eta}_s, Y^{\varepsilon,\eta}_s)dW_s^2}
\\
=& (I)+(II)+(III) \ .
\end{array}$$

We can estimate, by \eqref{Lemma:WeakConvergenceU1ToGaussian:Eq:EstimatesOfCorrection} and same methods in the proof of Lemma \ref{Lemma:WeakConvergenceU1ToGaussian}, that for $0\leq t \leq T$ and some constant $C=C(T)$ we have
$$\mathbf{E}|(I)|_{\mathbb{R}^n}^2\leq C \left(\dfrac{\eta^2}{\varepsilon^2}+\eta\right) \ .$$
By Lemma \ref{Lemma:RegularityBarEpsilonOperator} we know that
$$\mathbf{E}|(II)|_{\mathbb{R}^n}^2\leq C \displaystyle{\int_0^t \mathbf{E}|X_s^{\varepsilon,\eta}-X_s^{\varepsilon}|_{\mathbb{R}^n}^2ds \ .}$$
It is straightforward to have
$$\mathbf{E}|(III)|_{\mathbb{R}^n}^2\leq C \eta \ .$$

Combining the above three estimates and make use of Gronwall's inequality, we arrive at \eqref{Lemma:ClosenessXToXhatCorrectorMethod:Eq:ClosenessXEpsEtaToXEpsCorrector}.
\end{proof}

\begin{proposition}\label{Proposition:NormalApproximationWeakConvergence}
As $\eta \rightarrow 0$ the process $Z^{\varepsilon,\eta}_t$ converges weakly on the interval $[0,T]$ and in the space $\mathbf{C}([0,T]; \mathbb{R}^n)$
to the process $Z^{\varepsilon}_t$ defined by the following equation
\begin{equation}\label{Proposition:NormalApproximationWeakConvergence:Eq:LimitingProcess}
Z^\varepsilon_t=\int_0^t M(X^\varepsilon_s)Z^\varepsilon_sds+N_1^\varepsilon(t)+N_2^\varepsilon(t) \ ,
\end{equation}
where $N_1^\varepsilon(t)$ and $N_2^\varepsilon(t)$ are two Gaussian processes with means $0$ and explicitly calculated covariances, and
$M(X)$ is an $n\times n$ matrix function.
\end{proposition}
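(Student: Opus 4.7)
The plan is to split $Z^{\varepsilon,\eta}_t$ into three pieces using \eqref{Eq:U1Process}, \eqref{Eq:U2Process}, and \eqref{Eq:VProcess}, namely $Z^{\varepsilon,\eta}_t = U_1^{\varepsilon,\eta}(t) + U_2^{\varepsilon,\eta}(t) + V^{\varepsilon,\eta}(t)$, identify the weak limit of each piece separately, and finally assemble the limit equation via variation of parameters applied to a linear integral equation with deterministic coefficients.

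For the first piece, Lemma \ref{Lemma:WeakConvergenceU1ToGaussian} already gives $U_1^{\varepsilon,\eta} \Rightarrow N_1^\varepsilon$ in $\mathbf{C}([0,T];\mathbb{R}^n)$ with covariance $A^\varepsilon(t)$ as in \eqref{Lemma:WeakConvergenceU1ToGaussian:Eq:CovarianceMatrix}. For the third piece, $V^{\varepsilon,\eta}(t)$ is a continuous $W^2$-martingale whose predictable quadratic variation is $\int_0^t A_2(X^{\varepsilon,\eta}_s, Y^{\varepsilon,\eta}_s)\,ds$; applying Lemma \ref{Lemma:AveragingPrincipleIntegralLimit} componentwise to the entries of $A_2$, this converges in $L^1$ to the deterministic limit $\int_0^t \overline{A_2(X^{\varepsilon}_s, Y)}^\varepsilon(X^\varepsilon_s)\,ds$. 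A Rebolledo-type martingale central limit theorem then yields weak convergence of $V^{\varepsilon,\eta}$ to a continuous centered Gaussian martingale $N_2^\varepsilon$ with this covariance. Because $W^1$ and $W^2$ are independent and the asymptotically dominant stochastic term of $U_1^{\varepsilon,\eta}$ supplied by the corrector decomposition \eqref{Eq:CorrectorEquationSlowMotion} is a $W^1$-stochastic integral while $V^{\varepsilon,\eta}$ is a $W^2$-integral, the cross bracket of the two limit martingales vanishes and joint weak convergence $(U_1^{\varepsilon,\eta}, V^{\varepsilon,\eta}) \Rightarrow (N_1^\varepsilon, N_2^\varepsilon)$ holds with $N_1^\varepsilon$ and $N_2^\varepsilon$ independent.

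For the middle piece, I Taylor-expand $X \mapsto \overline{B_2(X,Y)}^\varepsilon(X)$ around $X^\varepsilon_s$. The $\mathbf{C}^{(1)}$-regularity with Lipschitz first derivative of the averaged drift, which follows from smoothness of $B_2$ and the Gaussian parametrization \eqref{Eq:InvariantMeasureOUSmallDiffusion} of $\mu^{X,\varepsilon}$, gives
\begin{equation*}
\overline{B_2(X^{\varepsilon,\eta}_s, Y)}^\varepsilon(X^{\varepsilon,\eta}_s) - \overline{B_2(X^\varepsilon_s, Y)}^\varepsilon(X^\varepsilon_s) = M(X^\varepsilon_s)(X^{\varepsilon,\eta}_s - X^\varepsilon_s) + R_s^{\varepsilon,\eta},
\end{equation*}
with $M(X):= \nabla_X \overline{B_2(X,Y)}^\varepsilon(X)$ an $n\times n$ matrix and $|R_s^{\varepsilon,\eta}|_{\mathbb{R}^n}\le C|X^{\varepsilon,\eta}_s - X^\varepsilon_s|_{\mathbb{R}^n}^2$. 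Dividing by $\sqrt{\eta}$ and using the sharper rate in Lemma \ref{Lemma:ClosenessXToXhatCorrectorMethod} shows $\eta^{-1/2} \int_0^t R_s^{\varepsilon,\eta}\,ds \to 0$ in $L^1$, so that $U_2^{\varepsilon,\eta}(t) = \int_0^t M(X^\varepsilon_s)\, Z^{\varepsilon,\eta}_s\,ds + o_{\mathbf{P}}(1)$.

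Assembling the three pieces gives the asymptotic identity $Z^{\varepsilon,\eta}_t = \int_0^t M(X^\varepsilon_s) Z^{\varepsilon,\eta}_s\,ds + U_1^{\varepsilon,\eta}(t) + V^{\varepsilon,\eta}(t) + o_{\mathbf{P}}(1)$. Since $X^\varepsilon$ is deterministic, $M(X^\varepsilon_\cdot)$ is a continuous deterministic matrix-valued function, and variation of parameters using the fundamental matrix $\Phi(t,s)$ solving $\partial_t \Phi = M(X^\varepsilon_t)\Phi$, $\Phi(s,s)=I$, expresses $Z^{\varepsilon,\eta}_t$ as a continuous linear functional of $U_1^{\varepsilon,\eta}+V^{\varepsilon,\eta}$ on $\mathbf{C}([0,T];\mathbb{R}^n)$. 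The continuous mapping theorem, together with the joint weak convergence above, then yields $Z^{\varepsilon,\eta}\Rightarrow Z^{\varepsilon}$ where $Z^\varepsilon$ solves \eqref{Proposition:NormalApproximationWeakConvergence:Eq:LimitingProcess}. The main obstacle will be the joint weak convergence of $(U_1^{\varepsilon,\eta}, V^{\varepsilon,\eta})$ in $\mathbf{C}([0,T];\mathbb{R}^{2n})$: this requires using \eqref{Eq:CorrectorEquationSlowMotion} to isolate the asymptotically dominant $W^1$- and $W^2$-martingale parts, verifying the Lindeberg-type conditions so the martingale CLT applies in path space, and invoking the sharper rate of Lemma \ref{Lemma:ClosenessXToXhatCorrectorMethod} (rather than Proposition \ref{Proposition:ConvergenceXEpsEtaToXEps}) so that the Taylor remainder in $U_2^{\varepsilon,\eta}$ is genuinely negligible after division by $\sqrt{\eta}$.
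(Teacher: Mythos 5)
Your proposal follows essentially the same route as the paper's own proof: the identical decomposition $Z^{\varepsilon,\eta}=U_1^{\varepsilon,\eta}+U_2^{\varepsilon,\eta}+V^{\varepsilon,\eta}$, Lemma \ref{Lemma:WeakConvergenceU1ToGaussian} for $U_1^{\varepsilon,\eta}$, Lemma \ref{Lemma:AveragingPrincipleIntegralLimit} applied to the quadratic variation of $V^{\varepsilon,\eta}$, and the Taylor expansion of $X\mapsto\overline{B_2(X,Y)}^{\varepsilon}(X)$ with $M(X)=\nabla_X\overline{B_2(X,Y)}^{\varepsilon}(X)$, with the remainder controlled through the sharper rate of Lemma \ref{Lemma:ClosenessXToXhatCorrectorMethod}. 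The only divergences are presentational refinements of steps the paper leaves implicit (the martingale CLT for $V^{\varepsilon,\eta}$, the joint convergence of $(U_1^{\varepsilon,\eta},V^{\varepsilon,\eta})$ via independence of $W^1$ and $W^2$, and the closing variation-of-parameters/continuous-mapping argument), plus the fact that you record the covariance of $N_2^{\varepsilon}$ in the form $\int_0^t\overline{\Sigma_2\Sigma_2^T}^{\varepsilon}\,ds$, which is the natural limit of the quadratic variation, whereas the paper writes the product of the separately averaged factors $\overline{\Sigma_2}^{\varepsilon}\,\overline{\Sigma_2^T}^{\varepsilon}$.
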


\begin{proof}
Apparently, from \eqref{Eq:RescaledDeviationIntegralEquation}, \eqref{Eq:UProcess}, \eqref{Eq:VProcess}, \eqref{Eq:U1Process} and
\eqref{Eq:U2Process} we know that we have the decomposition
$$Z^{\varepsilon,\eta}(t)=U_2^{\varepsilon,\eta}(t)+U_1^{\varepsilon,\eta}(t)+V^{\varepsilon,\eta}(t) \ .$$

By using Lemma \ref{Lemma:AveragingPrincipleIntegralLimit}, we know that the process $V^{\varepsilon,\eta}(t)$ converges weakly as $\eta \rightarrow 0$
to a Gaussian process $N_2^\varepsilon(t)$ with covariance matrix
$$\mathcal{A}^\varepsilon(t)=\left(\displaystyle{\int_0^t \overline{\Sigma_2(X^\varepsilon_s, Y)}^\varepsilon(X^\varepsilon_s)
 \overline{\Sigma_2^T(X^\varepsilon_s, Y)}^\varepsilon(X^\varepsilon_s)ds}\right)_{1\leq i,j\leq n} \ .$$

Lemma \ref{Lemma:WeakConvergenceU1ToGaussian} provides the weak convergence of $U_1^{\varepsilon,\eta}(t)$ to $N_1^\varepsilon(t)$
with covariance matrix $A^\varepsilon(t)$ as in \eqref{Lemma:WeakConvergenceU1ToGaussian:Eq:CovarianceMatrix}.

Finally, the weak convergence of $U_2^{\varepsilon,\eta}(t)$ to $\displaystyle{\int_0^t M(X^\varepsilon_s)Z_s^\varepsilon ds}$ can be obtained by doing a standard
Taylor expansion argument as in the proof of Theorem 3.1 of \cite{[Khasminskii1966SmallParameterDE]}. In fact, set
$M(X)=\nabla_X[\overline{B_2(X,Y)}^\varepsilon(X)]$. From \eqref{Eq:U2Process} we have
$$\begin{array}{ll}
&U_2^{\varepsilon,\eta}(t)-\displaystyle{\int_0^t M(X^\varepsilon_s)Z^{\varepsilon,\eta}_sds}
\\
=&\displaystyle{\dfrac{1}{\sqrt{\eta}}\int_0^t \left[\overline{B_2(X^{\varepsilon,\eta}_s,Y)}^{\varepsilon}(X^{\varepsilon,\eta}_s)
-\overline{B_2(X^{\varepsilon}_s,Y)}^{\varepsilon}(X^{\varepsilon}_s)-\sqrt{\eta}M(X^\varepsilon_s)Z^{\varepsilon,\eta}_s\right]ds }
\\
=&\displaystyle{\dfrac{1}{\sqrt{\eta}}\int_0^t \left[
\overline{B_2(X^{\varepsilon}_s+\sqrt{\eta} Z^{\varepsilon,\eta}_s,Y)}^{\varepsilon}(X^{\varepsilon}_s+\sqrt{\eta}Z^{\varepsilon,\eta}_s)
\right.}
\\&\hspace{1in}
\displaystyle{\left.
-\overline{B_2(X^{\varepsilon}_s,Y)}^{\varepsilon}(X^{\varepsilon}_s)
-\sqrt{\eta}M(X^\varepsilon_s)Z^{\varepsilon,\eta}_s\right]ds \ .}
\end{array}$$

Therefore by boundedness of second derivatives of of $B_2(X,Y)$ with respect to $X$ and reasoning as in Lemma \ref{Lemma:RegularityBarEpsilonOperator},
we have
\begin{equation}\label{Proposition:NormalApproximationWeakConvergence:Eq:ClosenessU2DeterministicPart}
\mathbf{E}\left|U_2^{\varepsilon,\eta}(t)-\displaystyle{\int_0^t M(X^\varepsilon_s)Z^{\varepsilon,\eta}_sds}\right|_{\mathbb{R}^n}
\leq C\sqrt{\eta}\int_0^t \mathbf{E}|Z^{\varepsilon,\eta}_s|^2_{\mathbb{R}^n} ds \ .
\end{equation}

By making use of Lemma \ref{Lemma:ClosenessXToXhatCorrectorMethod},
as well as Lemma \ref{Lemma:RegularityBarEpsilonOperator}, we have
$$\begin{array}{ll}
&\mathbf{E}|U_2^{\varepsilon,\eta}(t)|_{\mathbb{R}^n}^2
\\
=& \displaystyle{\dfrac{1}{\eta}\mathbf{E}\left|\int_0^t [\overline{B_2(X_s^{\varepsilon,\eta}, Y)}^\varepsilon(X_s^{\varepsilon,\eta})-\overline{B_2(X^\varepsilon_s, Y)}^\varepsilon(X_s^\varepsilon)]ds\right|_{\mathbb{R}^n}^2}
\\
\leq & \displaystyle{\dfrac{C}{\eta}\int_0^t \mathbf{E}|X_s^{\varepsilon,\eta}-X_s^\varepsilon|_{\mathbb{R}^n}^2ds}
\\
\leq & C<\infty
\end{array}$$
and
$$\begin{array}{ll}
& \mathbf{E}|U_2^{\varepsilon,\eta}(t+h)-U_2^{\varepsilon,\eta}(t)|_{\mathbb{R}^n}^2
\\
= & \displaystyle{\dfrac{1}{\eta}\mathbf{E}\left|\int_{t}^{t+h} [\overline{B_2(X_s^{\varepsilon,\eta}, Y)}^\varepsilon(X_s^{\varepsilon,\eta})-\overline{B_2(X^\varepsilon_s, Y)}^\varepsilon(X_s^\varepsilon)]ds\right|_{\mathbb{R}^n}^2}
\\
\leq & \displaystyle{\dfrac{Ch}{\eta}\int_{t}^{t+h} \mathbf{E}|X_s^{\varepsilon,\eta}-X_s^\varepsilon|_{\mathbb{R}^n}^2ds}
\\
\leq & Ch^2 \ ,
\end{array}$$
which then imply $\mathbf{E}|Z^{\varepsilon,\eta}_t|_{\mathbb{R}^n}^2\leq C<\infty$, as well as the tighness of the family $Z^{\varepsilon,\eta}(t)$
in $\mathbf{C}([0,T]; \mathbb{R}^n)$. From the weak convergence of $U_1^{\varepsilon,\eta}(t)$ and $V^{\varepsilon,\eta}(t)$
to Gaussian processes, together with \eqref{Proposition:NormalApproximationWeakConvergence:Eq:ClosenessU2DeterministicPart}, we conclude
this Proposition.
\end{proof}

\section{Error estimate of SCGD from averaged SGD: Justification of the approximation.}\label{sec:error}

We briefly mention the justification of using SGD \eqref{Eq:AveragedSlowMotionSGD} to approximate $X^{\varepsilon,\eta}(t)$
in \eqref{Eq:DiffusionLimitTimeChangedStandardForm}. From Lemma
\ref{Lemma:ErrorSmallEpsAveragedQuantity} we know that as $\varepsilon\rightarrow 0$, by \eqref{Eq:Quantity:B2},
$$\overline{B_2(X, Y)}^\varepsilon(X)\approx -\mathbf{E} \widetilde{\nabla} g_w(X)\nabla f_v(\mathbf{E} g_w(X))+\mathcal{O}(\sqrt{\varepsilon}) \ .$$
Thus as $\varepsilon \rightarrow 0$, the process $X^\varepsilon(t)$ approximates another process $\bar{X}(t)$ that solves an ordinary differential equation
\eqref{Eq:AveragedSlowMotionGD}:
$$
d\bar{X}(t)=-\mathbf{E} \widetilde{\nabla} g_w(\bar{X}(t))\nabla f_v(\mathbf{E} g_w(\bar{X}(t)))dt \ , \ \bar{X}(0)=x_0 \ ,
$$
with an error of $\mathcal{O}(\sqrt{\varepsilon})$. In fact, equation \eqref{Eq:AveragedSlowMotionGD} can be viewed as a continuous version of the
Gradient Descent (GD) algorithm, which directly solves \eqref{Eq:OptimizationProblem}.

Furthermore, from Proposition \ref{Proposition:NormalApproximationWeakConvergence} we know that,
 as $\eta \rightarrow 0$, the process $Z^{\varepsilon,\eta}(t)$ converges weakly to random process $Z_t^\varepsilon$.
 The process $Z_t^\varepsilon$ has its deterministic drift part and is driven by two mean $0$ Gaussian processes
carrying explicitly calculated covariance structures.
This implies that, roughly speaking, from \eqref{Eq:RescaledDeviation} we have an expansion of the type
\eqref{Eq:ExpansionNormalApproximation}:
$$
X^{\varepsilon,\eta}(t)\stackrel{\mathcal{D}}{\approx} X^\varepsilon(t)+\sqrt{\eta}Z_t^\varepsilon \ ,
$$
as $\eta\rightarrow 0$. Here $\stackrel{\mathcal{D}}{\approx}$ means approximate equality of probability distributions.

Therefore by \eqref{Eq:ConvergenceSmallEpsAveragedQuantity}, \eqref{Eq:AveragedSlowMotionGD} and \eqref{Eq:ExpansionNormalApproximation}
we know that the slow motion $X^{\varepsilon,\eta}(t)$ in \eqref{Eq:DiffusionLimitTimeChangedStandardForm} (or \eqref{Eq:DiffusionLimitTimeChanged})
has an expansion around the GD algorithm in \eqref{Eq:AveragedSlowMotionGD}:
$$
X^{\varepsilon,\eta}(t)\stackrel{\mathcal{D}}{\approx} \bar{X}(t)+\mathcal{O}(\sqrt{\varepsilon})+\sqrt{\eta}Z_t^\varepsilon \ .
$$

Let us introduce the process $\mathrm{X}^{\varepsilon,\eta}(t)$ as
the following continuous version of the Stochastic Gradient Descent (SGD) algorithm, as in \eqref{Eq:AveragedSlowMotionSGD}:
$$
d\mathrm{X}^{\varepsilon,\eta}(t)=
-\mathbf{E} \widetilde{\nabla} g_w(\mathrm{X}^{\varepsilon,\eta}(t))\nabla f_v(\mathbf{E} g_w(\mathrm{X}^{\varepsilon,\eta}(t)))dt+\sqrt{\eta}dZ_t^\varepsilon \ , \
 \mathrm{X}^{\varepsilon,\eta}(0)=x_0 \ .
$$

From \eqref{Eq:AveragedSlowMotionGD} and \eqref{Eq:AveragedSlowMotionSGD} and using Gronwall's inequality, we know that
$$\bar{X}(t)-\mathrm{X}^{\varepsilon,\eta}(t)\approx \mathcal{O}(\sqrt{\eta}) \ .$$
So that by \eqref{Eq:ExpansionNormalApproximationGD} we further have
$$
X^{\varepsilon,\eta}(t)\stackrel{\mathcal{D}}{\approx} \mathrm{X}^{\varepsilon,\eta}(t)+\mathcal{O}(\sqrt{\varepsilon})+\mathcal{O}(\sqrt{\eta}) \ .
$$

The above is a justifiation of the approximation of SGD to the process $X^{\varepsilon,\eta}_t$ in \eqref{Eq:DiffusionLimitTimeChangedStandardForm}.
In the strongly convex case, $\mathrm{X}^{\varepsilon,\eta}(t)$ in \eqref{Eq:AveragedSlowMotionSGD} enters a small neighborhood containing the minimizer of
 \eqref{Eq:OptimizationProblem} in finite time $T>0$, so that
\eqref{Eq:ExpansionNormalApproximationSGD} implies $X^{\varepsilon,\eta}(t)$ in \eqref{Eq:DiffusionLimitTimeChanged} enters a basin containing the minimizer of  \eqref{Eq:OptimizationProblem}
also in finite time $T>0$. This validates the effectiveness of using the SCGD algorithm in the strongly convex case.

\section{Remarks and generalizations.}\label{sec:remarks}

(a) For general fast--slow systems of stochastic differential equations, strong approximation theorems are available
(see \cite{[Bakhtin2003]}, \cite{[BakhtinKifer2004]}).
Let us introduce a diffusion approximation $\mathrm{\underline{X}}^{\varepsilon,\eta}(t)$ of $X^\varepsilon(t)$ in \eqref{Eq:AveragedSlowMotionGaussianMeasureDeterministic}
by the stochastic differential equation:
\begin{equation}\label{Remark:Eq:AveragedSlowMotionGaussianMeasure}
d\mathrm{\underline{X}}^{\varepsilon,\eta}(t)=\overline{B_2(\mathrm{\underline{X}}^{\varepsilon,\eta}(t), Y)}^\varepsilon(\mathrm{\underline{X}}^{\varepsilon,\eta}(t))dt
+\sqrt{\eta}\Sigma^\varepsilon(\mathrm{\underline{X}}^{\varepsilon,\eta}(t))dW_t^2 \ , \ \mathrm{\underline{X}}^{\varepsilon,\eta}(0)=x_0 \ .
\end{equation}

Here $\Sigma^\varepsilon(X)\in \mathbb{R}^n\otimes \mathbb{R}^n$ is some appropriately chosen
non--degenerate noise matrix. The method of Bakhtin--Kifer (see \cite{[BakhtinKifer2004]}) provides a more refined
diffusion approximation analysis than the classical averaging principle. Roughly speaking, we have
for $0\leq t \leq T$,
\begin{equation}\label{Remark:Eq:Bakhtin-KiferDiffusionApproximation}
\mathbf{E}|X^{\varepsilon,\eta}(t)-\mathrm{\underline{X}}^{\varepsilon,\eta}(t)|_{\mathbb{R}^n}^2\leq C\eta^{1+\delta} \ ,
\end{equation}
for some $C=C(T)>0$ and small $\delta>0$.

Since we have Lemma \ref{Lemma:ErrorSmallEpsAveragedQuantity},
we shall also consider the diffusion limit $\mathrm{\underline{X}}^\eta(t)$ under the following SGD algorithm:
\begin{equation}\label{Remark:Eq:AveragedSlowMotionSGD}
d\mathrm{\underline{X}}^\eta(t)=-\mathbf{E} \widetilde{\nabla} g_w(\mathrm{\underline{X}}^\eta(t))\nabla f_v(\mathbf{E} g_w(\mathrm{\underline{X}}^\eta(t)))dt
+\sqrt{\eta}\Sigma(\mathrm{\underline{X}}^\eta(t))dW_t^2 \ , \ \mathrm{\underline{X}}^\eta(0)=x_0 \ ,
\end{equation}
where $\Sigma(X)\in \mathbb{R}^n\otimes \mathbb{R}^n$ is some appropriately chosen non--degenerate noise matrix.

The above diffusion limit of SGD algorithm aims at directly solving the optimization problem \eqref{Eq:OptimizationProblem}.
The convergence time analysis
in terms of $\eta$ follows standard results in SGD convergence analysis.
By using standard technique in the theory of stochastic differential equations,
we have, roughly speaking, for $0\leq t \leq  T$,
\begin{equation}\label{Remark:Eq:SecondStepDiffusionApproximation}
\mathbf{E}|\mathrm{\underline{X}}^{\varepsilon,\eta}(t)-\mathrm{X}^\eta(t)|_{\mathbb{R}^n}^2\leq C(\varepsilon+\eta) \ ,
\end{equation}
for some $C=C(T)>0$ and small $\delta>0$.

Combining \eqref{Remark:Eq:Bakhtin-KiferDiffusionApproximation} and \eqref{Remark:Eq:SecondStepDiffusionApproximation}, we obtain
an error bound, that for $0\leq t \leq T$,
\begin{equation}\label{Remark:Eq:ErrorBound}
\mathbf{E}|X^{\varepsilon,\eta}(t)-\mathrm{X}^\eta(t)|_{\mathbb{R}^n}^2\leq C (\varepsilon+\eta+\eta^{1+\delta}) \ ,
\end{equation}
for some $C=C(T)>0$ and small $\delta>0$.

However, the method provided by Bakhtin--Kifer can only cover the case when fast motion is moving on a compact space. Yet in our case the fast motion
$Y^{\varepsilon,\eta}(t)$ is an OU process for frozen $X$. Thus estimates \eqref{Remark:Eq:Bakhtin-KiferDiffusionApproximation} and henceforth \eqref{Remark:Eq:ErrorBound}
are only conjectures and have to be addressed in a future work.

(b) By using the corrector method as we did in Section 3, it is possible to show that the order of approximation
in Proposition \ref{Proposition:ConvergenceXEpsEtaToXEps} can be improved to be $C\left(\dfrac{\eta^2}{\varepsilon^2}+\eta\right)$,
as in Lemma \ref{Lemma:ClosenessXToXhatCorrectorMethod}.
However, our normal deviation analysis indicates that the order of
approximation of $X^{\varepsilon}(t)$ to $X^{\varepsilon,\eta}(t)$ in mean square sense has to be of order $\mathcal{O}(\eta)$ (see \eqref{Eq:ExpansionNormalApproximation}).
This is because the latter approximation is only in the weak sense, and the former approximation is in the strong ($L^2$) sense.
Thus the weak approximation can achieve better convergence rates. On the other hand,
by making use of Dambis--Dubins--Schwarz theorem (see \cite[Theorem 1.6]{[RY99]}), as well as the H\"{o}lder
continuity of the Brownian motion path, it is also possible to show strong approximations in the normal deviation analysis (see \cite{[HairerPavliotis2004]}).

(c) We have assumed that the functions $f_v: \mathbb{R}^m\rightarrow \mathbb{R}$ and $g_w: \mathbb{R}^n\rightarrow \mathbb{R}^m$ are supported on some compact subsets of $\mathbb{R}^m$
and $\mathbb{R}^n$, respectively. This leads to the fact that the drift vector fields and diffusion matrix fields $B_1(X)$, $B_2(X,Y)$,
$A_1(X)$, $A_2(X,Y)$ in \eqref{Eq:Quantity:B1}, \eqref{Eq:Quantity:B2}, \eqref{Eq:Quantity:A1}, \eqref{Eq:Quantity:A2}
contain bounded coefficients together with their first derivatives. Such an assumption is essential for our arguments in
 deriving the averaging principle and normal deviation results.
In practical situations, as we are dealing with the optimization problem \eqref{Eq:OptimizationProblem}, we are only interested in the dynamics
of the corresponding algorithm trajectories that approach the minimizer. Thus we can take a large ball in
the Euclidean space containing this minimizer, and we eliminate the trajectories outside this ball. By making use of large deviation estimates,
this leads to the fact that all error bounds or approximation results in our work could only be understood to be valid with
high probability (with probability close to $1$). The slogan of deriving approximation or convergence results with high
probability is consistent with standard results in the statistical machine learning literature (see \cite{[Vapnik]}).

\appendix

\section{Two technical lemmas.}

The following lemma characterizes quantitatively the convergence
$\overline{q(X,Y)}^\varepsilon\rightarrow q(X, \mathbf{E} g_w(X)) \text{ as } \varepsilon\rightarrow 0$.
Recall that the object $q$ in \eqref{Eq:BarEpsOperator} may be a scalar, a vector, or a matrix.
Let, in general, the components of $q$ be $q_{ij}$. Let
$$\sup\limits_{X,Y}\left(
\sum\limits_{k=1}^{n}\sum\limits_{i,j}\left|\dfrac{\partial q_{ij}}{\partial X_k}(X,Y)\right| \ , \
\sum\limits_{l=1}^{m}\sum\limits_{i,j}\left|\dfrac{\partial q_{ij}}{\partial Y_l}(X,Y)\right|\right)\leq M \ .$$

\begin{lemma}\label{Lemma:ErrorSmallEpsAveragedQuantity}
We have
$$|\overline{q(X,Y)}^\varepsilon- q(X, \mathbf{E} g_w(X))|_{\text{norm}}\leq C\sqrt{\varepsilon} \ ,$$
where the constant $C>0$ depends on $M$, and the norm $|q|_{\text{norm}}$ is a vector (matrix) norm if $q$ is a vector (matrix), respectively.
\end{lemma}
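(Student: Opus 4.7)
The plan is to exploit the Gaussian structure of $\mu^{X,\varepsilon}$ explicitly: since $\mu^{X,\varepsilon}\sim \mathcal{N}(\mathbf{E} g_w(X), \tfrac{\varepsilon}{2}\Sigma_1(X)\Sigma_1^T(X))$, a sample $Y$ from this measure deviates from $\mathbf{E} g_w(X)$ by $\mathcal{O}(\sqrt{\varepsilon})$ in $L^2$, so the smoothness of $q$ in its second argument will transfer this $\sqrt{\varepsilon}$ directly to the averaging error.

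First, I would use $\int \mu^{X,\varepsilon}(dY)=1$ to write, componentwise,
$$
\overline{q(X,Y)}^\varepsilon_{ij} - q_{ij}(X,\mathbf{E} g_w(X)) = \int_{\mathbb{R}^m} \left[q_{ij}(X,Y) - q_{ij}(X,\mathbf{E} g_w(X))\right] \mu^{X,\varepsilon}(dY).
$$
Applying the mean value theorem in the $Y$-variable along the segment from $\mathbf{E} g_w(X)$ to $Y$, together with the hypothesis $\sum_{l}|\partial q_{ij}/\partial Y_l|\leq M$, yields the pointwise bound
$$
|q_{ij}(X,Y)-q_{ij}(X,\mathbf{E} g_w(X))|\leq M\,|Y-\mathbf{E} g_w(X)|_{\mathbb{R}^m}.
$$

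Next, by Jensen's inequality (or Cauchy--Schwarz) applied to $\mu^{X,\varepsilon}$,
$$
\int_{\mathbb{R}^m}|Y-\mathbf{E} g_w(X)|_{\mathbb{R}^m}\,\mu^{X,\varepsilon}(dY)\leq \left(\int_{\mathbb{R}^m}|Y-\mathbf{E} g_w(X)|_{\mathbb{R}^m}^2\,\mu^{X,\varepsilon}(dY)\right)^{1/2}= \sqrt{\tfrac{\varepsilon}{2}\,\mathrm{tr}(\Sigma_1(X)\Sigma_1^T(X))},
$$
where the last equality uses that the covariance of $\mu^{X,\varepsilon}$ is $\tfrac{\varepsilon}{2}\Sigma_1(X)\Sigma_1^T(X)$. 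Since $\Sigma_1(X)$ is assumed bounded uniformly in $X$ (a consequence of the compact support assumption on $f$ and $g$), $\mathrm{tr}(\Sigma_1\Sigma_1^T)\leq C$, so the right-hand side is at most $C\sqrt{\varepsilon}$.

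Combining these estimates for each component $(i,j)$ and collecting them into the relevant vector or matrix norm (the number of components being fixed and finite, the conversion costs only a multiplicative constant depending on dimension) gives
$$
|\overline{q(X,Y)}^\varepsilon - q(X,\mathbf{E} g_w(X))|_{\text{norm}}\leq C\sqrt{\varepsilon},
$$
as claimed. There is no serious obstacle here; the only mildly delicate point is bookkeeping the relationship between the componentwise derivative bound $M$ and the chosen vector/matrix norm $|\cdot|_{\text{norm}}$, so that the final constant $C$ depends only on $M$, the ambient dimension, and the uniform bound on $\Sigma_1$.
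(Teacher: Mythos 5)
Your proof is correct and follows essentially the same route as the paper: both exploit that $\mu^{X,\varepsilon}$ is Gaussian with mean $\mathbf{E} g_w(X)$ and covariance $\tfrac{\varepsilon}{2}\Sigma_1(X)\Sigma_1^T(X)$, apply the mean value theorem in $Y$ with the derivative bound $M$, and reduce the estimate to the first absolute moment of the centered Gaussian, which is $\mathcal{O}(\sqrt{\varepsilon})$. The only cosmetic difference is that the paper performs the explicit change of variables $Y=\mathbf{E} g_w(X)+\sqrt{\varepsilon/2}\,Z$ to integrate $|Z|$ against an $\varepsilon$-independent Gaussian, while you bound the moment directly via Jensen and the trace of the covariance; these are equivalent.
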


\begin{proof}
The Gaussian measure $\mu^{X,\varepsilon}(dY)$ in \eqref{Eq:InvariantMeasureOUSmallDiffusion} has a density function
(see \cite[Theorem 1.2.9]{[Muirhead]})
$$\mu^{X,\varepsilon}(dY)=\dfrac{\exp\left(-\dfrac{1}{2}(Y-\mathbf{E} g_w(X))^T\left(\dfrac{\varepsilon}{2}\Sigma_1(X)\Sigma_1(X)^T\right)^{-1}(Y-\mathbf{E} g_w(X))\right)}
{(2\pi)^{\frac{m}{2}}\left(\dfrac{\varepsilon}{2}\right)^{m/2}[\det(\Sigma_1(X)\Sigma_1(X)^{T})]^{1/2}}
dY \ .$$

Let the density function
$$\rho(Z)=\dfrac{\exp\left(-\dfrac{1}{2}Z^T (\Sigma_1(X)\Sigma_1(X)^T)^{-1}Z\right)}{(2\pi)^{\frac{m}{2}}[\det(\Sigma_1(X)\Sigma_1(X)^T)]^{1/2}} \ ,$$
so that $$\int_{\mathbb{R}^m}\rho(Z)dZ=1 \ .$$

Then we have
$$\begin{array}{ll}
& \displaystyle{\int_{\mathbb{R}^m} q(X,Y)\mu^{X,\varepsilon}(dY)}
\\
= & \hspace{-.1in}
\displaystyle{\int_{\mathbb{R}^m}\dfrac{\exp\left(-\dfrac{1}{2}(Y-\mathbf{E} g_w(X))^T\left(\dfrac{\varepsilon}{2}\Sigma_1(X)\Sigma_1(X)^T\right)^{-1}(Y-\mathbf{E} g_w(X))\right)}
{(2\pi)^{\frac{m}{2}}\left(\dfrac{\varepsilon}{2}\right)^{m/2}[\det(\Sigma_1(X)\Sigma_1(X)^{T})]^{1/2}} q(X,Y)
dY}
\\= & \hspace{-.1in}
\displaystyle{\int_{\mathbb{R}^m} \rho(Z) q\left(X, \mathbf{E} g_w(X)+\sqrt{\dfrac{\varepsilon}{2}}Z\right)dZ \ .}
\end{array}$$

From here we have
\begin{align*}
&\quad
\displaystyle{\int_{\mathbb{R}^m} q(X,Y)\mu^{X,\varepsilon}(dY)-q(X, \mathbf{E} g_w(X))}
\\&=
\displaystyle{\int_{\mathbb{R}^m} \rho(Z) \left[q\left(X, \mathbf{E} g_w(X)+\sqrt{\dfrac{\varepsilon}{2}}Z\right)-q(X, \mathbf{E} g_w(X))\right]dZ \ ,}
\end{align*}
so that
$$\displaystyle{\left|\int_{\mathbb{R}^m} q(X,Y)\mu^{X,\varepsilon}(dY)-q(X, \mathbf{E} g_w(X))\right|_{\text{norm}}}
\leq \sqrt{\dfrac{\varepsilon}{2}}M \int_{\mathbb{R}^m}|Z|_{\mathbb{R}^m}\rho(Z)dZ\leq C\sqrt{\varepsilon} \ .$$
\end{proof}

The following Lemma is about regularity properties with respect to $X$ of the $\overline{q(X,Y)}^\varepsilon(X)$ operator.

\begin{lemma}\label{Lemma:RegularityBarEpsilonOperator}
For any $X_1, X_2\in \mathbb{R}^n$ and some $C>0$ we have
\begin{equation}\label{Lemma:RegularityBarEpsilonOperator:Eq:LipschitzContinuityBarEpsOperator}
|\overline{q(X_1,Y)}^\varepsilon(X_1)-\overline{q(X_2,Y)}^\varepsilon(X_2)|_{\text{norm}}\leq C|X_1-X_2|_{\mathbb{R}^n}  \ ,
\end{equation}
where the constant $C>0$ depends on $M$, and the norm $|q|_{\text{norm}}$ is a vector (matrix) norm if $q$ is a vector (matrix), respectively.
\end{lemma}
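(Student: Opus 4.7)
The plan is to reduce the comparison at $X_1$ and $X_2$ to a single integral against an $X$--independent reference measure, so that only the integrand depends on $X$ and we may invoke the Lipschitz continuity of the building blocks $q$, $\mathbf{E}g_w$ and $\Sigma_1$. The naive route, which keeps $\mu^{X_1,\varepsilon}$ and $\mu^{X_2,\varepsilon}$ as distinct measures, fails immediately because even the covariance depends on $X$, so there is no canonical way to pair the two integration variables. The change of variables below sidesteps this difficulty.

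First I would write, as in the change of variables used in Lemma \ref{Lemma:ErrorSmallEpsAveragedQuantity}, the representation
\begin{equation*}
\overline{q(X,Y)}^\varepsilon(X)=\int_{\mathbb{R}^m} q\!\left(X,\,\mathbf{E} g_w(X)+\sqrt{\tfrac{\varepsilon}{2}}\,\Sigma_1(X)W\right)\varphi(W)\,dW,
\end{equation*}
where $\varphi(W)=(2\pi)^{-m/2}\exp(-|W|_{\mathbb{R}^m}^2/2)$ is the standard Gaussian density on $\mathbb{R}^m$. Indeed, if $W\sim\mathcal{N}(0,I_m)$ then $\sqrt{\varepsilon/2}\,\Sigma_1(X)W$ has covariance $\tfrac{\varepsilon}{2}\Sigma_1(X)\Sigma_1^T(X)$, so that $\mathbf{E} g_w(X)+\sqrt{\varepsilon/2}\,\Sigma_1(X)W$ is distributed according to $\mu^{X,\varepsilon}$ in \eqref{Eq:InvariantMeasureOUSmallDiffusion}. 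The key gain is that $\varphi(W)\,dW$ no longer depends on $X$.

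Next I would form the difference and use the triangle inequality pointwise in $W$:
\begin{align*}
&\bigl|\overline{q(X_1,Y)}^\varepsilon(X_1)-\overline{q(X_2,Y)}^\varepsilon(X_2)\bigr|_{\text{norm}} \\
&\quad \leq \int_{\mathbb{R}^m}\Bigl|q\bigl(X_1,\mathbf{E} g_w(X_1)+\sqrt{\tfrac{\varepsilon}{2}}\Sigma_1(X_1)W\bigr)\\
&\hspace{1in}-q\bigl(X_2,\mathbf{E} g_w(X_2)+\sqrt{\tfrac{\varepsilon}{2}}\Sigma_1(X_2)W\bigr)\Bigr|_{\text{norm}}\varphi(W)\,dW.
\end{align*}
Since $q$ has bounded first partial derivatives (the uniform bound $M$ assumed just before Lemma \ref{Lemma:ErrorSmallEpsAveragedQuantity}), $q$ is Lipschitz in both of its arguments. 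The assumptions on $f$ and $g$ made at the outset of Section 1 guarantee that $\mathbf{E}g_w(\cdot)$ and $\Sigma_1(\cdot)$ are themselves uniformly Lipschitz in $X$ (their first derivatives are bounded). Plugging these three Lipschitz estimates in yields an integrand bounded by
\begin{equation*}
C\,|X_1-X_2|_{\mathbb{R}^n}\bigl(1+|W|_{\mathbb{R}^m}\bigr),
\end{equation*}
with $C$ depending on $M$, the Lipschitz constants of $\mathbf{E}g_w$ and $\Sigma_1$, and on $\varepsilon$ through the bounded factor $\sqrt{\varepsilon/2}$ (so for $0<\varepsilon\le 1$ one may take $C$ independent of $\varepsilon$).

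Finally, integrating against $\varphi(W)\,dW$ introduces only the first absolute moment of the standard Gaussian, which is a universal finite constant. This yields \eqref{Lemma:RegularityBarEpsilonOperator:Eq:LipschitzContinuityBarEpsOperator} and completes the proof. The only delicate point, and the one the change of variables is designed to neutralize, is the $X$--dependence of the covariance of $\mu^{X,\varepsilon}$; once that dependence is pulled inside the integrand via the push--forward through $\Sigma_1(X)$, everything reduces to standard Lipschitz arithmetic.
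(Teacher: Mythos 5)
Your proof is correct and follows essentially the same route as the paper: both represent $\overline{q(X,Y)}^\varepsilon(X)$ as an integral of $q(X,\,\mathbf{E}g_w(X)+\text{(matrix)}\cdot W)$ against the fixed standard Gaussian and then apply Lipschitz arithmetic, with the first Gaussian moment supplying the final constant. The only cosmetic difference is that you push forward through $\sqrt{\varepsilon/2}\,\Sigma_1(X)$ directly, whereas the paper introduces a symmetric Lipschitz square root $\sigma(X)$ of the covariance (citing Freidlin) --- the same citation is what backs your unproved assertion that $\Sigma_1(\cdot)$ can be taken uniformly Lipschitz.
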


\begin{proof}
The Gaussian measure $\mu^{X,\varepsilon}(dY)$ in \eqref{Eq:InvariantMeasureOUSmallDiffusion} has a density function
(see \cite[Theorem 1.2.9]{[Muirhead]})
$$\mu^{X,\varepsilon}(dY)=\dfrac{\exp\left(-\dfrac{1}{2}(Y-\mathbf{E} g_w(X))^T\left(\dfrac{\varepsilon}{2}\Sigma_1(X)\Sigma_1(X)^T\right)^{-1}(Y-\mathbf{E} g_w(X))\right)}
{(2\pi)^{\frac{m}{2}}\left(\dfrac{\varepsilon}{2}\right)^{m/2}[\det(\Sigma_1(X)\Sigma_1(X)^{T})]^{1/2}}
dY \ .$$

Let the density function for the standard normal distribution $\mathcal{N}(0, I_{m})$ be
$$\mu(dN)=\dfrac{1}{(2\pi)^{\frac{m}{2}}}\exp\left(-\dfrac{1}{2}N^T N\right)dN \ ,$$
so that
$$\int_{\mathbb{R}^m}\mu(dN)=1 \ .$$

Let the random variable $N$ follow the standard normal distribution $\mathcal{N}(0, I_m)$, so that $N$ has the distribution $\mu(dN)$
on $\mathbb{R}^m$. If $Y$ is a random variable that follows the distribution $\mu^{X,\varepsilon}(dY)$, then
$Y=\sigma(X)N+\mathbf{E} g_w(X)$, where $\sigma(X)$ is a non--singular $m\times m$ matrix such that $\sigma(X)\sigma^T(X)=(\Sigma_1(X)\Sigma_1^T(X))^{-1}$
(see \cite[Theorem 1.2.9]{[Muirhead]}). By \cite[\S 3.2, Theorem 2.1]{[FreidlinRedBook]},
the matrix $\sigma(X)$ can be chosen to be symmetric and Lipschitz continuous. Since $\Sigma_1(X)\Sigma_1^T(X)$ is compactly supported,
the matrix $\sigma(X)$ can also be chosen to be compactly supported.

Then we have
{\small$$\begin{array}{ll}
& \overline{q(X_1,Y)}^\varepsilon(X_1)-\overline{q(X_2,Y)}^\varepsilon(X_2)
\\
= & \displaystyle{\int_{\mathbb{R}^m} q(X_1,Y)\mu^{X_1,\varepsilon}(dY)-\int_{\mathbb{R}^m} q(X_2, Y)\mu^{X_2,\varepsilon}(dY)}
\\
= & \displaystyle{\int_{\mathbb{R}^m} q(X_1,\sigma(X_1)N+\mathbf{E} g_w(X_1))\mu(dN)-\int_{\mathbb{R}^m} q(X_2,\sigma(X_2)N+\mathbf{E} g_w(X_2))\mu(dN) \ .}
\end{array}$$}

From here we have
\begin{align*}
&\quad
\displaystyle{|\overline{q(X_1,Y)}^\varepsilon(X_1)-\overline{q(X_2,Y)}^\varepsilon(X_2)|_{\text{norm}}}
\\&\leq
C|X_1-X_2|_{\mathbb{R}^n}\displaystyle{\int_{\mathbb{R}^m}|N|_{\mathbb{R}^m}\mu(dN)}\leq C|X_1-X_2|_{\mathbb{R}^n}
\ .
\end{align*}
\end{proof}

\section*{Acknowledgments}
We would like to thank the anonymous referee for valuable suggestions. We also would like to thank
Professor Haoyi Xiong for many relevant discussions.


\medskip
Received for publication October 2017.
\medskip

\end{document}